\documentclass[reqno]{amsart}
\usepackage{a4wide}
\usepackage{hyperref}
\usepackage{amsmath,amssymb,amsthm,amsfonts}
\usepackage[switch]{lineno}
\usepackage{xcolor}

\def\p{\partial}
\def\bm{\boldsymbol}
\def\vep{\varepsilon}
\def\q{\quad}
\def\cd{\cdot}
\def\O{\Omega}
\def\na{\nabla}
\def\cal{\mathcal}
\def\f{\frac}

\def\bb{\mathbb}
\def\ba{\begin{equation}\begin{aligned}}
	\def\ea{\end{aligned}\end{equation}}
\def\bn{\[\begin{aligned}}
\def\en{\end{aligned}\]}
\def\l{\label}
\begin{document}
\title[ ]
{On the Lifespan of Axisymmetric Hall-MHD with Swirl}

\author[Z. Wu, L. Yang]
{Zhipeng Wu, Linbin Yang}

\address{ Zhipeng Wu \newline
School of Mathematics and Statistics,
Nanjing University of Information Science and Technology,
Nanjing 210044, China}
\email{wuzhipeng@nuist.edu.cn}

\address{ Linbin Yang \newline
School of Mathematics and Statistics,
Nanjing University of Information Science and Technology,
Nanjing 210044, China}
\email{yanglinbin@nuist.edu.cn}

\subjclass[2020]{35Q35, 76W05, 76B03} 
\keywords{axially symmetric, Hall-MHD, lifespan, small initial swirl} 

\begin{abstract}
In this paper, we study the three-dimensional inviscid incompressible resistive Hall-MHD system in the axisymmetric setting with nontrivial swirl velocity and purely azimuthal magnetic. Assuming only that the swirl component of the initial velocity is sufficiently small, we prove that the lifespan of the strong solution can be sufficiently large. An explicit lifespan lower bound in terms of the size of the initial swirl is given. Moreover, we also study the behavior of the lifespan as the resistivity tends to zero.

\end{abstract}

\maketitle
\numberwithin{equation}{section}
\newtheorem{theorem}{Theorem}[section]
\newtheorem{lemma}[theorem]{Lemma}
\newtheorem{proposition}[theorem]{Proposition}
\newtheorem{remark}[theorem]{Remark}
\allowdisplaybreaks

\section{Introduction}

In this paper, we consider the three-dimensional inviscid, incompressible and resistive Hall-HMD equations
\begin{equation} \label{eq:Hall-MHD}
\left\{\begin{aligned}
	& \p_t \bm{u} + \bm{u}\cdot \nabla \bm{u} + \nabla p =\frac{1}{\mu_0}\cdot \bm{h}\cdot \nabla \bm{h},\\
	& \p_t \bm{h} + \bm{u}\cdot \nabla \bm{h} + \nu_0 \nabla \times [(\nabla \times \bm{h}) \times \bm{h} ] = \bm{h} \cdot \nabla \bm{u} + \nu \Delta \bm{h},\\
	& \nabla \cdot \bm{u} =0,\\
	& \nabla \cdot \bm{h} =0,
\end{aligned}\right.
\end{equation}
with the initial data
\begin{equation}\label{eq:ini}
    \bm{u}(0,x)=\bm{u}_0, \q \bm{h}(0,x)=\bm{h}_0.
\end{equation}
 Here $\bm{u}:\mathbb{R}^3 \mapsto \mathbb{R}^3$ represents the velocity and $\bm{h}:\mathbb{R}^3 \mapsto \mathbb{R}^3$ is the magnetic filed. $p:\mathbb{R}^3 \mapsto \mathbb{R}$ is the pressure. Compared with the classical MHD system, the Hall-MHD equations contain the additional nonlinear Hall term $\na\times\left((\na\times \bm{h})\times \bm{h}\right)$, which becomes important in plasma phenomena on length scales below the ion inertial length (see \cite{Somov:2010,Shalybkov}). This term introduces stronger derivative interactions and makes the analysis more delicate than that for the standard MHD equations.
 
 We focus on axially symmetric solutions of system \eqref{eq:Hall-MHD} for $\nu> 0$ (viscous magnetic field). For the proof of the lifespan with fixed positive coefficients, the constants $\mu_0,\nu_0,\nu$ can be normalized. Therefore, except in section \ref{ploi} where the dependence on the magnetic resistivity is explicitly tracked, we set $\mu_0=\nu_0=\nu=1$ for simplicity.
 Most of the proof is carried out in the cylindrical coordinates $(r,\theta,z)$, i.e. for $x=(x_1,x_2,x_3)\in \mathbb{R}^3$,
 \begin{equation*}
 	r=\sqrt{x_1 ^2 + x_2 ^2},\q \theta =\arctan \frac{x_2}{x_1}, \q z=x_3.
 \end{equation*}
 A solution of \eqref{eq:Hall-MHD} is called an axially symmetric solution if and only if
 \begin{equation*}
 \left\{\begin{aligned}
 	& \bm{u} = u_r(t, r, z)\bm{e_r} + u_{\theta}(t, r, z)\bm{e_{\theta}} + u_z(t, r, z)\bm{e_z},\\
 	& \bm{h} = h_{r}(t, r, z)\bm{e_r} + h_{\theta}(t, r, z)\bm{e_{\theta}} + h_z(t, r, z)\bm{e_z},
 \end{aligned}\right.
 \end{equation*}
 satisfy the system \eqref{eq:Hall-MHD}. Here, the basis vectors $\bm{e_r}$, $\bm{e_{\theta}}$, $\bm{e_z}$ are
 \begin{equation*}
 	\bm{e_r} = \left(\frac{x_1}{r}, \frac{x_2}{r}, 0\right), \q \bm{e_{\theta}} = \left(-\frac{x_2}{r}, \frac{x_1}{r}, 0\right) \q \bm{e_z}=\left(0,0,1\right).
 \end{equation*}
 
 Owing to the local well-posedness theory established in \cite{CDL}, system \eqref{eq:Hall-MHD} also admits axially symmetric local classical solutions of the special form:
 \begin{equation*}
     \left\{\begin{aligned}
         &\bm{u}=u_r(t,r,z)\bm{e_r}+u_\theta(t,r,z)\bm{e_\theta}+u_z(t,r,z)\bm{e_z}, \\
 &\bm{h}=h_{\theta}(t,r,z)\bm{e_{\theta}}.
     \end{aligned}\right.
 \end{equation*}
 In this situation, the system \eqref{eq:Hall-MHD} can be written as following
 \begin{equation} \label{eq:Ax_Hall-MHD}
 \left\{\begin{aligned}
 	& \p_t u_r + (u_r\p_r + u_z\p_z)u_r - \frac{u_{\theta} ^2}{r} + \p_r p = -\frac{h_{\theta} ^2}{r},\\
 	& \p_t u_{\theta} + (u_r\p_r + u_z\p_z)u_{\theta} + \frac{u_{\theta}u_{r}}{r} =0,\\
 	& \p_t u_z + (u_r\p_r + u_z\p_z)u_z + \p_z p=0,\\
 	& \p_t h_{\theta} + (u_r\p_r + u_z\p_z)h_{\theta} - \frac{h_{\theta}u_r}{r} =  (\Delta - \frac{1}{r^2})h_{\theta} + \frac{\p_z h_{\theta} ^2}{r},\\
 	& \nabla\cdot \bm{u} = \p_r u_r + \frac{u_r}{r} + \p_z u_z =0.
 \end{aligned}\right.
 \end{equation}
 In this paper, we will prove the lifespan of this family of strong solutions can be arbitrarily large if the initial swirl component of velocity is small enough. Here is the main result:
 \begin{theorem}\label{theo}
Let $(\bm{u},\bm{h})$ be a smooth axially symmetric solution to the initial problem \eqref{eq:Ax_Hall-MHD}, satisfying \eqref{eq:ini}, while the initial data $(\bm{u_0}, \bm{h_0})\in H^3(\bb{R}^3)\times H^3(\bb{R}^3)$, satisfy $\na\cd \bm{u}_0  = h_{r,0}=h_{z,0}=0$. Suppose
\begin{equation*}
	\|\nabla \times ({u}_{0,\theta}\bm{e_{\theta}})\|_{L^\infty} := \vep << 1.
\end{equation*}
Then the solution $(\bm{u},\bm{h})$ keeps in $H^3(\bb{R}^3)\times H^3(\bb{R}^3)$ when $t\leq T_*$, where $T_*$ satisfies:
\begin{equation} \label{res}
    T_*=\frac{C_*}{({1+E_0})^{4/5}}\left(\log\log\log\log(\varepsilon^{-1})\right)^{3/5}.
\end{equation}
Here $C$ is a nonnegative constant and $E_0:=\|(\bm{u}_0,\bm{h}_0)\|_{H^3}^2$.
\end{theorem}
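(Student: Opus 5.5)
The plan is to use the standard reformulation for axisymmetric flows with swirl, in the spirit of the Euler/Navier–Stokes results with small swirl. We work with the quantities
\[
\Gamma := r u_\theta,\qquad \Omega:=\frac{\omega_\theta}{r},\qquad H:=\frac{h_\theta}{r},
\]
where $\omega_\theta=\p_z u_r-\p_r u_z$. By \eqref{eq:Ax_Hall-MHD}, $\Gamma$ is transported by $b:=u_r\bm{e_r}+u_z\bm{e_z}$, so $\|\Gamma\|_{L^\infty}$ is conserved. The swirl-vorticity $\omega_r=-\p_z u_\theta$, $\omega_z=\p_r u_\theta+u_\theta/r$ is controlled initially by $\vep$. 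The quantities $\Omega$ and $H$ satisfy
\[
\p_t\Omega+b\cdot\na\Omega=-\frac{\p_z (u_\theta^2)}{r^2}+\frac{\p_z (h_\theta^2)}{r^2},\qquad
\p_t H+b\cdot\na H-\Big(\Delta+\frac{2}{r}\p_r\Big)H=\p_z(H^2)\cdot r\,\text{-type term}.
\]
The Hall term becomes $\frac{\p_z h_\theta^2}{r}=2rH\p_zH$, a Burgers-type term in $z$ with coefficient $r$.

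\textbf{Step 1 (magnetic a priori bounds).} We use the maximum principle for $H$: the operator $\Delta+\frac2r\p_r$ is the 5D Laplacian, and $b\cdot\na+2rH\p_z$ is a transport drift. This gives $\|H\|_{L^\infty}\le\|H_0\|_{L^\infty}$ and $\|H\|_{L^p}$ bounds, together with energy bounds $\|(\bm u,\bm h)\|_{L^2}^2+\int_0^t\|\na \bm h\|_{L^2}^2\le E_0$. Then we obtain parabolic gain of $\na H$ in $L^2_tL^2$ and, via 5D heat kernel estimates, $\|\p_z(h_\theta^2)/r^2\|$ in the norms needed for the $\Omega$ equation. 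Here the Hall term needs care; we would control $\|\na H\|_{L^\infty}$-type quantities by a bootstrap with growth at most polynomial or exponential in $t(1+E_0)$.

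\textbf{Step 2 (swirl part, the core).} We bootstrap on a time interval where $\|\Omega\|_{L^\infty}+\|\na b\|_{L^\infty}\le M$. Along this interval, the swirl vorticity satisfies stretching equations, e.g.
\[
\p_t\omega_r+b\cdot\na\omega_r=(\omega_r\p_r+\omega_z\p_z)u_r,
\]
so $\|(\omega_r,\omega_z)\|_{L^\infty}\le \vep\, e^{\int_0^t\|\na b\|_{L^\infty}}$. The forcing $\p_z(u_\theta^2)/r^2=2u_\theta\p_zu_\theta/r^2$ is bounded by $\|\Gamma\|_{L^\infty}^{a}\cdot\|\omega_r\|_{L^\infty}^{1-a}$-type interpolations, which are small. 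Next, $\|\na b\|_{L^\infty}$ is controlled by $\|\Omega\|_{L^\infty\cap L^{3,1}}$ and the logarithmic Beale–Kato–Majda inequality
\[
\|\na b\|_{L^\infty}\lesssim 1+\|\omega\|_{L^\infty}\log(e+\|\bm u\|_{H^3}).
\]
Combining this with the $H^3$ energy estimate, $\frac{d}{dt}\|(\bm u,\bm h)\|_{H^3}^2\lesssim(1+\|\na\bm u\|_{L^\infty}+\|\na \bm h\|_{W^{1,\infty}})\|(\bm u,\bm h)\|^2_{H^3}$ (the Hall term being handled by resistivity via a commutator estimate), gives a double-exponential bound on $\|(\bm u,\bm h)\|_{H^3}$. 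The smallness enters through $\vep\exp(\exp(\exp(Ct(1+E_0)^{\cdots})))$. Closing the bootstrap requires $\vep\cdot(\text{iterated exponential of } t)\ll1$. Counting the exponentials (Gronwall for $\na b$, BKM logarithm, stretching of $\omega_r$, one more from the magnetic part) yields four logs. The powers $4/5$ and $3/5$ arise from the polynomial-in-$t$ growth of the inner quantity, roughly $t^{5/3}(1+E_0)^{4/3}$, coming from interpolating Step 1's bounds (e.g. $\|H\|_{L^\infty_tL^p}$ and $L^2_t$ gradient bounds with Hölder in time).

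\textbf{Step 3 (conclusion).} We invert the iterated-exponential condition to obtain $T_*$ in \eqref{res}, then apply the local theory of \cite{CDL} with the $H^3$ bound to continue the solution up to $T_*$.

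The main obstacle is Step 1/2's interaction with the Hall term: obtaining a bound on $\|\na h\|_{L^\infty}$ (needed in the $H^3$ estimate and for $\p_z(h_\theta^2)/r^2$ in $\Omega$) that grows only polynomially or single-exponentially, without smallness of $h$. We would first try to exploit the structure $2rH\p_zH$ as transport in the 5D formulation, so that the maximum principle and De Giorgi/Nash-type or heat-kernel estimates give Hölder and then gradient control of $H$ depending only on $E_0$ and $t$.
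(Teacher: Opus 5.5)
Your outline has the right architecture (a bootstrap on the growth $\vep\, e^{\int_0^t\|\nabla b\|_{L^\infty}}$ of the swirl vorticity, the logarithmic BKM inequality, an $H^3$ estimate in which resistivity absorbs the Hall term, and inversion of an iterated exponential). However, the step where the smallness of the swirl is actually used does not work as you describe it.

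The swirl forcing in the $\Omega$ equation is $\frac{\partial_z(u_\theta^2)}{r^2}=-2\frac{u_\theta}{r}J$ with $J=\frac{\omega_r}{r}$. (Your signs are reversed. Also, the Hall term in the $\mathcal{H}$ equation is $2\mathcal{H}\partial_z\mathcal{H}$, with no factor $r$.) The factor $\frac{u_\theta}{r}$ is harmless, since $\|\frac{u_\theta}{r}\|_{L^\infty}\le\frac12\|\omega_z\|_{L^\infty}$. But $J$ behaves like $\partial_r\omega_r$ near the axis, and no interpolation of $\|\omega_r\|_{L^\infty}$ and $\|\Gamma\|_{L^\infty}$ controls it; $\|\Gamma\|_{L^\infty}$ need not even be finite for $H^3$ data.

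The missing idea is that $J$ satisfies its own transport equation $\partial_t J+b\cdot\nabla J=(\omega_r\partial_r+\omega_z\partial_z)\frac{u_r}{r}$. Its right-hand side is bounded in $L^p$ by $\|(\omega_r,\omega_z)\|_{L^\infty}\|\Omega\|_{L^p}$, via $\|\nabla\frac{u_r}{r}\|_{L^p}\lesssim\|\Omega\|_{L^p}$. The pair $(\Omega,J)$ then closes in $L^2$ and $L^6$ with Gr\"onwall factor $\exp\int_0^t\|(\omega_r,\omega_z)\|_{L^\infty}ds\le e$, under the bootstrap hypothesis $t\sup_{s\le t}\|\nabla\times(u_\theta\bm{e_\theta})\|_{L^\infty}\le 1$. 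So smallness makes the $\Omega$--$J$ coupling integrable in time; it does not by itself make the forcing small.

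Your bootstrap quantity is also unsuitable. $\|\Omega\|_{L^\infty}$ is a second-derivative quantity at the axis and is not controlled by $E_0$ for $H^3$ data. It is also not needed, since $\|\frac{u_r}{r}\|_{L^\infty}\lesssim\|\Omega\|_{L^2}^{1/2}\|\Omega\|_{L^6}^{1/2}$ suffices. Finally, $\|\Omega\|_{L^\infty\cap L^{3,1}}$ bounds $\frac{u_r}{r}$ in $L^\infty$, not $\nabla b$.

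The magnetic side, which you yourself flag as the main obstacle, is also left without a working mechanism. Higher norms of $\mathcal{H}$ cannot be obtained independently of the fluid: the drift $b$ depends on $\Omega$, while $\Omega$ is forced by $\partial_z\mathcal{H}^2$. The paper therefore closes $\|\nabla\mathcal{H}\|_{L^2}^2+\int_0^t\|\nabla^2\mathcal{H}\|_{L^2}^2ds$ jointly with $\|\Omega\|_{L^2\cap L^6}$. The growth rate $(1+E_0)^{4/3}(1+t)^{5/3}$ is exactly $\int_0^t\|\Omega\|_{L^2}^{4/3}ds$ with $\|\Omega\|_{L^2}^2\lesssim(1+E_0)^2(1+t)$.

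A De Giorgi--Nash approach for $\mathcal{H}$ would not give what is needed. It yields H\"older rather than gradient bounds, and in any case $\nabla\bm{h}$ contains $r\nabla\mathcal{H}$, which a bound on $\nabla\mathcal{H}$ does not control for large $r$.

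The paper instead proceeds in four steps:
\begin{itemize}
\item it proves $L^p$ bounds for $h_\theta$, $\frac{u_\theta}{r}$ and $\omega_\theta$, hence $\nabla b\in L^\infty_t(L^2\cap L^6)$;
\item it proves $\nabla\partial_z\mathcal{H}\in L^\infty_tL^2$;
\item it runs an $L^2$ energy estimate for $\nabla\bm{h}$;
\item it applies maximal $L^2$ regularity to $\partial_t\nabla\bm{h}-\Delta\nabla\bm{h}=\nabla(f\bm{e_\theta})$.
\end{itemize}
The last step gives $\nabla^3\bm{h}\in L^2_tL^2$, hence $\nabla\bm{h}\in L^2_tL^\infty$ and $\nabla^2\bm{h}\in L^2_tL^3$ by Gagliardo--Nirenberg. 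A separate bound on $\int_0^t\|\partial_z\mathcal{H}\|_{L^\infty}ds$ is needed to get $\omega_\theta\in L^\infty$ for the BKM step.

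The $L^2$-in-time integrability is essential because your $H^3$ inequality is misstated. The Hall term loses a derivative, and after absorbing $\frac12\|\nabla^4\bm{h}\|_{L^2}^2$ the Gr\"onwall coefficient is $\|\nabla\bm{h}\|_{L^\infty}^2+\|\nabla^2\bm{h}\|_{L^3}^2$, not something linear in $\|\nabla\bm{h}\|_{W^{1,\infty}}$. Finally, polynomial or single-exponential growth of these magnetic quantities is neither obtained nor required. They grow like iterated exponentials of $C(1+E_0)^{4/3}(1+t)^{5/3}$, and these iterations are what produce the fourfold logarithm in the lifespan.
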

\begin{remark}\label{rea}
 It is natural to investigate how the lifespan lower bound depends on $\nu$, especially in the vanishing resistivity $\nu\to0^+$, see $\eqref{eq:Hall-MHD}_1$. This issue will be discussed in Section \ref{ploi}, where we show the details how the lower bound obtained in \eqref{res} degenerates as $\nu\rightarrow 0^+$.
\end{remark}

The mathematical theory of the Hall-MHD system has attracted considerable attention in recent years. For the general Hall-MHD equations, extensive research has been devoted to issues such as local well-posedness, blow-up criteria, and regularity. In particular, Chae, Degond, and Liu \cite{CDL} established the global existence of weak solutions and the local well-posedness of smooth solutions in the Sobolev space $H^s(\mathbb{R}^3)$ with $s>5/2$. Subsequently, Dai \cite{Dai2020} extended the local well-posedness theory to the $n$-dimensional case in $H^s(\mathbb{R}^n)$ for $n\ge 2$ and $s>n/2$, and later proved the local well-posedness of the Hall-MHD system in optimal Sobolev spaces \cite{Dai2021}. Wang and Zuo \cite{Wang2014} investigated the initial value problem for the Hall-MHD system and obtained a Beale--Kato--Majda type blow-up criterion for smooth solutions with partial viscosity. More recently, Rahman \cite{Rahman2025} established several regularity results for both the 3-D Hall equation and the 3-D Hall-MHD system.

In the axially symmetric setting, the special geometric structure provides additional cancellations and allows one to derive refined a priori estimates. For the Hall-MHD system, global well-posedness for the axisymmetric incompressible viscous andresistive HMHD equations was established by Fan et al. \cite{Fan2013}. Li and Pan \cite{li2022single} established a single-component BKM-type regularity criterion for the inviscid axially symmetric Hall-MHD system with swirl-free current density, showing that the boundedness of the partial vorticity associated with the swirl component of the velocity suffices to prevent blow-up. Later, Li and Yang \cite{LZYM2022} obtained a single-component regularity criterion for the non-resistive axially symmetric Hall-MHD system. These results indicate that the swirl component plays an essential role in the axisymmetric regularity theory of Hall-MHD.

The idea that a small swirl component can lead to a long lifespan is traced back to Danchin \cite{Danc2012}, where the author shows the lifespan $T_*$ of 3D axisymmetric Euler equations satisfies the following lower bound:
\[
T_* \geq \frac{1}{C\left\|w_{0, \theta}\right\|_{B_{\infty, 1}^0}} \log \left(1+\frac{1}{2} \log \left(1+\frac{C\left\|w_{0, \theta}\right\|_{B_{\infty, 1}^0}}{\|\left(v_{0, \theta}\right)^2\|_{B_{\infty, 1}^1}}\right)\right)\,.
\]
 However, the proof in \cite{Danc2012} is valid only on a smooth axisymmetric domain $D$ away from the $z$-axis. Later Li and Zhou \cite{LZZT2024} overcome this limit of the domain, by showing the result is also valid in the whole space. Recently Yang and Zhou \cite{MR5040962} studied the lifespan estimate of ideal Hall-MHD ($\nu=0$) without swirl ($u_\theta=0$). Due to the absence of magnetic resistivity, authors in \cite{MR5040962} had to assume that the initial magnetic field was small. We also refer readers to \cite{HL2023} for lifespan estimates of anisotropic 3D Navier-Stokes equations.

Motivated by above works, we study the lifespan problem for the axially symmetric resistive Hall-MHD system with nontrivial swirl velocity. Our result requires only the initial swirl component of the velocity to be sufficiently small, with no size restriction on the magnetic field. In addition, we also record how the above lifespan depends on the resistive coefficient $\nu$ under the same assumption that the initial swirl component of the velocity is sufficiently small. 

Our proof develops a bootstrap scheme adapted to the Hall structure, and the main theorems is carried out mainly in the following three quantities:
\begin{align*}
		\Omega: =\f{\omega_\theta}{r},\q J=\f{\omega_r}{r}, \q \mathcal{H}: =\frac{h_\theta}{r}.
\end{align*}
Here $\omega_r$, $\omega_\theta$ are components of vorticity $\bm{\omega}$ which is defined by
 \[
 \bm{\omega} = \na\times \bm{u} =\omega_r(t,r,z)\bm{e_r} + \omega_\theta(t,r,z)\bm{e_\theta} + \omega_z (t,r,z)\bm{e_z},
 \]
 where 
 \[
 \omega_r= -\p_zu_{\theta}, \q \omega_{\theta}=\p_zu_r-\p_ru_z, \q \omega_z=\p_ru_\theta+\f{u_\theta}{r}.
 \]
Morever, by the first three equations of \eqref{eq:Ax_Hall-MHD}, the vorticity components $(\omega_r,\omega_{\theta},\omega_z)$ satisfy
\begin{equation} \label{eq:w}
\left\{\begin{aligned}
	& \p_t \omega_r + (u_r\p_r + u_z\p_z)\omega_r = (\omega_r\p_r + \omega_z\p_z)u_r,\\
	& \p_t \omega_{\theta} + (u_r\p_r + u_z\p_z)\omega_{\theta} = \frac{u_r\omega_{\theta}}{r} + \frac{1}{r}\p_z u_{\theta} ^2 -\frac{1}{r}\p_z h_{\theta} ^2,\\
	& \p_t \omega_{z} + (u_r\p_r + u_z\p_z)\omega_z = (\omega_r\p_r + \omega_z\p_z)u_z.
\end{aligned}\right.
\end{equation}

Using \eqref{eq:w}$_{1,2}$ and \eqref{eq:Ax_Hall-MHD}$_4$, it is easy to deduce the above three quantities satisfy
\begin{equation*}
    \left\{\begin{aligned}
	& \p_t \Omega + \bm{b}\cdot \nabla\Omega = -\p_z \mathcal{H}^2	 - 2\frac{u_{\theta}}{r}J,\\
	& \p_t J + \bm{b}\cdot \nabla J = (\omega_r\p_r + \omega_z\p_z)\frac{u_r}{r},\\
    & \p_t\cal{H}+\bm{b}\cd \na\cal{H}-(\Delta+\f2r\p_r)\cal{H}= 2\cal{H}\p_z\cal{H},
\end{aligned}\right.
\end{equation*}
where $\bm{b}=u_r\bm{e_r}+u_z\bm{e_z}$. Here, $\cal{H}$ is not introduced as an additional quantity in the final higher-order energy estimate. Instead, it serves as an intermediate axially symmetric magnetic variable that enables us to obtain the estimate of $\|\nabla \bm{h}\|_{{L^2}(0,t;L^\infty)}$ and $\|\na^2 \bm{h}\|_{L^2(0,t;L^3)}^2$.

\subsection*{Notations}
Through out this paper, $C$ denotes a positive constant whose value may change from line to line, and $C_{a,b,\cdots}$ denotes a positive constant depending only on $a,b,\cdots$. We write $A\lesssim B$ if there exists a constant $C>0$ such that $A\leq CB$, and $A\simeq B$ if both $A\lesssim B$ and $B\lesssim A$ hold.

For $1\leq p\leq \infty$, we denote by $L^p=L^p(\mathbb{R}^3)$ the usual Lebesgue space with norm
\[
\|f\|_{L^p}
:=
\begin{cases}
\left(\displaystyle\int_{\mathbb{R}^3}|f(x)|^p\,dx\right)^{1/p}, & 1\leq p<\infty,\\[1ex]
\operatorname*{ess\,sup}\limits_{x\in\mathbb{R}^3}|f(x)|, & p=\infty.
\end{cases}
\]
For an axially symmetric function $f=f(r,z)$, its $L^p(\mathbb{R}^3)$ norm can also be written as
\[
\|f\|_{L^p}
=
\begin{cases}
\left(2\pi\displaystyle\int_0^\infty\int_{\mathbb{R}} |f(r,z)|^p\, r\,dr\,dz\right)^{1/p}, & 1\leq p<\infty,\\[1ex]
\operatorname*{ess\,sup}\limits_{(r,z)\in \mathbb{R}^+\times\mathbb{R}} |f(r,z)|, & p=\infty.
\end{cases}
\]

For $k\in\mathbb{N}$ and $1\leq p\leq \infty$, $W^{k,p}=W^{k,p}(\mathbb{R}^3)$ denotes the usual Sobolev space with norm
\[
\|f\|_{W^{k,p}}
:=
\sum_{|\alpha|\leq k}\|D^\alpha f\|_{L^p},
\]
and $\dot W^{k,p}=\dot W^{k,p}(\mathbb{R}^3)$ denotes the homogeneous Sobolev space with seminorm
\[
\|f\|_{\dot W^{k,p}}
:=
\sum_{|\alpha|=k}\|D^\alpha f\|_{L^p}.
\]
In particular, we write
\[
H^k:=W^{k,2},\qquad \dot H^k:=\dot W^{k,2}.
\]

We will frequently use the following axisymmetric quantities:
\[
\bm{b}:=u_r e_r+u_z e_z,\qquad
\cal{H}:=\frac{h_\theta}{r},\qquad
\Omega:=\frac{\omega_\theta}{r},\qquad
J:=\frac{\omega_r}{r}.
\]

The commutator of two operators $\mathcal{A}$ and $\mathcal{B}$ is defined by
\[
[\mathcal{A},\mathcal{B}]:=\mathcal{A}\mathcal{B}-\mathcal{B}\mathcal{A}.
\]

The rest of this paper is organized as follows. In Section \ref{preli}, we collect several preliminary lemmas that will be used in the proof. Section \ref{proof} is devoted to the derivation of the key a priori estimates and is ready for the proof of the main theorem. Section \ref{endd} is the final proof of the main theorem. Section \ref{ploi} gives the outline of proof of Remark \ref{rea}.

\section{Preliminaries}\label{preli}

 We begin by presenting several auxiliary lemmas that will be used repeatedly in the proof of the main theorem. The first is the classical Gagliardo–Nirenberg interpolation inequality in $\bb{R}^3$, the proof of which can be found in \cite{Nirenberg1959}.

\begin{lemma}[Gagliardo-Nirenberg] \label{ineq:G_N}
	Fix $q, r \in[1, \infty]$ and $j, m \in \mathbb{N} \cup\{0\}$ with $j \leq m$. Suppose that $f\in L^q \cap \dot{W}^{m, r}$ and there exists a real number $\alpha \in[j / m, 1]$ such that
	\begin{align*}
		\frac{1}{p}=\frac{j}{3}+\alpha\left(\frac{1}{r}-\frac{m}{3}\right)+\frac{1-\alpha}{q}.
	\end{align*}
	Then $f \in \dot{W}^{j, p}$ and there exists a constant $C>0$ such that
	\begin{align*}
		\left\|\nabla^j f\right\|_{L^p} \leq C\left\|\nabla^m f\right\|_{L^r}^\alpha\|f\|_{L^q}^{1-\alpha},
	\end{align*}
except the following two cases: 

(i) $j=0, m r<3$ and $q=\infty$; (In this case it is necessary to assume also that either $f \rightarrow 0$ at infinity, or $f \in L^s$ for some $s<\infty$.)
		
(ii) $1<r<\infty$ and $m-j-3 / r \in \mathbb{N}$. (In this case it is necessary to assume also that $\alpha<1$.)
\end{lemma}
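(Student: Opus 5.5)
The statement is the classical inequality of \cite{Nirenberg1959}, so the plan is to reproduce Nirenberg's argument: establish the two extreme values of $\alpha$ and then interpolate between them. First note that the relation between $p,q,r,j,m,\alpha$ is forced by scaling. Replacing $f$ by $f(\lambda\,\cdot)$ multiplies $\|\nabla^j f\|_{L^p}$ by $\lambda^{j-3/p}$, while the right-hand side scales by
\[
\lambda^{\alpha(m-3/r)-3(1-\alpha)/q}.
\]
Equating exponents gives exactly the displayed identity for $1/p$. Hence it suffices to prove the inequality in a form that is homogeneous in this sense, and we may work with $f\in C_c^\infty(\mathbb{R}^3)$ and conclude by density. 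This density step is where the decay hypothesis of case (i) enters.

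\textbf{Step 1 ($\alpha=1$).} Here the inequality reads $\|\nabla^j f\|_{L^p}\le C\|\nabla^m f\|_{L^r}$ with $1/p=1/r-(m-j)/3$, which is the Sobolev embedding applied to $\nabla^j f$. I would prove it for one derivative and iterate. For one derivative, start from Gagliardo's inequality $\|g\|_{L^{3/2}}\le C\|\nabla g\|_{L^1}$, obtained by writing $|g(x)|\le\int|\partial_i g|\,dx_i$ in each direction and applying the Loomis--Whitney/H\"older product estimate. Then apply it to $g=|f|^{\gamma}$ with a suitable $\gamma$ to reach $\|f\|_{L^{3r/(3-r)}}\lesssim\|\nabla f\|_{L^r}$ for $1\le r<3$. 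When $mr>3$ and $p=\infty$, use Morrey's estimate instead. The borderline in which $m-j-3/r$ is a nonnegative integer (the embedding into $L^\infty$ or BMO) genuinely fails. This is exactly exception (ii), which is why $\alpha<1$ must be imposed there.

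\textbf{Step 2 ($\alpha=j/m$).} This is the pure interpolation
\[
\|\nabla^j f\|_{L^p}\le C\|\nabla^m f\|_{L^r}^{j/m}\|f\|_{L^q}^{1-j/m}.
\]
The core case is $j=1$, $m=2$, namely $\|\nabla f\|_{L^p}^2\lesssim\|f\|_{L^q}\|\nabla^2 f\|_{L^r}$ with $2/p=1/q+1/r$.
\begin{itemize}
\item In one variable, write $\int|f'|^p=\int f'\,|f'|^{p-2}f'$, integrate by parts to get $-(p-1)\int f\,|f'|^{p-2}f''$, and apply H\"older.
\item Apply this in each coordinate direction and integrate over the remaining variables. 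The case $p=\infty$ is handled separately, by a local Taylor/mean-value argument on intervals of optimized length.
\end{itemize}
General $(j,m)$ then follows by induction. The one-step inequalities for consecutive derivative orders give a log-convexity of $k\mapsto\|\nabla^k f\|_{L^{p_k}}$, and chaining them yields the $j/m$ exponent.

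\textbf{Step 3 (intermediate $\alpha$).} For $\alpha\in(j/m,1)$, combine the two endpoints. Apply H\"older interpolation $\|g\|_{L^p}\le\|g\|_{L^{p_0}}^{\theta}\|g\|_{L^{p_1}}^{1-\theta}$ to $g=\nabla^j f$, where $p_0$ is the exponent from Step 2 and $p_1$ the one from Step 1. Then bound each factor by its endpoint inequality; the exponents match automatically by the scaling identity. Alternatively, one can use an intermediate Sobolev step on $\nabla^{j'}f$ combined with Step 2.

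I expect the main obstacle to be the endpoint bookkeeping rather than the main mechanism: the cases $p=\infty$ or $q=\infty$, and keeping $1/p$ in $[0,1]$ along the induction so that every intermediate exponent is admissible, together with the precise handling of the two exceptions. Since the paper only uses the lemma in non-exceptional ranges, I would treat those exceptional cases by citing \cite{Nirenberg1959} rather than re-deriving them.
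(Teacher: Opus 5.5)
The paper gives no proof of this lemma and only refers to Nirenberg. You follow Nirenberg's overall plan (prove $\alpha=j/m$ and $\alpha=1$, then interpolate), but your Step~3 has a genuine gap. H\"older interpolation $\|g\|_{L^p}\le\|g\|_{L^{p_0}}^{\theta}\|g\|_{L^{p_1}}^{1-\theta}$ needs the $\alpha=1$ endpoint to be a Lebesgue estimate, i.e.\ $1/p_1=1/r-(m-j)/3\ge 0$. When $(m-j)r>3$ you get $1/p_1<0$. Your own scaling computation then shows that no bound $\|\nabla^j f\|_{L^s}\le C\|\nabla^m f\|_{L^r}$ holds for any $s\in[1,\infty]$, so there is nothing to interpolate against. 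These cases are \emph{not} among the exceptions (i)--(ii). They include the very instance the paper uses in the proof of Proposition~\ref{ineq:h_the}: $\|g\|_{L^\infty}\lesssim\|g\|_{L^6}^{1/2}\|\nabla g\|_{L^6}^{1/2}$, i.e.\ $j=0$, $m=1$, $q=r=6$, $\alpha=1/2$, $p=\infty$, where $1/p_1=-1/6$. Your alternative route (a Sobolev step on an intermediate $\nabla^{j'}f$) does not help, because with $m=1$ there is no intermediate derivative. Your Step~1 remark about Morrey is also misplaced. At $\alpha=1$ one gets $p=\infty$ only when $(m-j)r=3$, the borderline of exception (ii). For $(m-j)r>3$ Morrey gives a H\"older-seminorm bound, which is not of the stated form and cannot be fed into H\"older's inequality on Lebesgue norms.

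The missing ingredient is a separate $L^\infty$ endpoint at the value $\alpha_*\in[j/m,1)$ defined by $1/p(\alpha_*)=0$. In the basic case $j=0$, $m=1$, $r>3$:
\begin{itemize}
\item write $f(x)=f_{B_R(x)}+\big(f(x)-f_{B_R(x)}\big)$;
\item bound the average by $CR^{-3/q}\|f\|_{L^q}$;
\item bound the oscillation by Morrey's $CR^{1-3/r}\|\nabla f\|_{L^r}$;
\item optimize in $R$.
\end{itemize}
This yields $\|f\|_{L^\infty}\le C\|\nabla f\|_{L^r}^{\alpha_*}\|f\|_{L^q}^{1-\alpha_*}$ with $\alpha_*=\frac{3/q}{1-3/r+3/q}$, which equals $1/2$ in the paper's case. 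General $(j,m)$ with $(m-j)r>3$ is assembled from this block together with your Steps~1--2. For example, with $j=1$, $m=3$, $q=r=2$: $\|\nabla f\|_{L^\infty}\lesssim\|\nabla^2 f\|_{L^6}^{3/4}\|\nabla f\|_{L^2}^{1/4}\lesssim\|\nabla^3 f\|_{L^2}^{5/6}\|f\|_{L^2}^{1/6}$. Only after that can you H\"older-interpolate between $\alpha=j/m$ and $\alpha_*$ to reach every $\alpha$ with $1/p\in[0,1]$. In Nirenberg's formulation this corresponds to reading the left side as a H\"older seminorm when $1/p<0$ and interpolating such seminorms against $L^q$ norms.

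There is a smaller hole of the same kind. For $r=1$, $m-j=3$, the $\alpha=1$, $p=\infty$ case is true and non-exceptional, since $|g(x)|\le\|\partial_1\partial_2\partial_3 g\|_{L^1}$ for $g\in C_c^\infty$. Your one-derivative iteration $L^1\to L^{3/2}\to L^3\to L^\infty$ breaks at the last step, so this case also needs the direct argument.
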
 \qed

We are now in a position to state the following lemma, which provides the fundamental estimates for the system \eqref{eq:Ax_Hall-MHD} in the case of $\nu =1$.

\begin{lemma}\label{ineq:Lp_con}
	Define $\mathcal{H}:=\frac{h_{\theta}}{r}$. Let $(\bm{u},\bm{h})\in H^3(\bb{R}^3)\times H^3(\bb{R}^3)$ be the solution of \eqref{eq:Ax_Hall-MHD}, then we have:
    
    (i) For $p\in[2,\infty]$ and $t\in(0,\infty)$,
	\begin{flalign}	\label{ineq:na_H_L2}
		&\|\mathcal{H}(t,\cdot)\|_{L^p}^p + p(p-1) \int_0^t \int_{\mathbb{R}^3} | \nabla \mathcal{H}(s,x) |^2 | \mathcal{H}(s,x) |^{p-2} dxds \leq \|\mathcal{H}_0\|_{L^p}^p .\\
        \label{ineq:H_Lp}
		&\|\mathcal{H}(t,\cdot)\|_{L^p} \leq \|\mathcal{H}_0\|_{L^p}.
	\end{flalign}
	(ii) For $\bm{u}_0, \bm{h}_0 \in L^2$ and $t \in (0,\infty)$,
	\begin{equation}\label{ineq:uh}
		\|(\bm{u},h_{\theta})(t,\cdot)\|_{L^2}^2 + \int_{0}^{t}\|\nabla h_\theta(s,\cdot)\|_{L^2}^2 ds + \int_{0}^{t}\|\frac{h_\theta}{r} (s,\cdot)\|_{L^2}^2 ds \leq C_0.
	\end{equation}
where $C_0$ depends only on $\|(\bm{u_0},\bm{h_0)}\|_{L^2}$.
\end{lemma}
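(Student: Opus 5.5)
The plan is to do direct $L^p$ energy estimates on the scalar equation for $\mathcal{H}$,
\[
\p_t\mathcal{H}+\bm{b}\cd\na\mathcal{H}-\Big(\Delta+\f2r\p_r\Big)\mathcal{H}=2\mathcal{H}\p_z\mathcal{H},
\]
and, for part (ii), on the original system \eqref{eq:Ax_Hall-MHD}.

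\textbf{Part (i).} I first derive the $\mathcal{H}$ equation from \eqref{eq:Ax_Hall-MHD}$_4$. Writing $h_\theta=r\mathcal{H}$, one has $(\Delta-\f1{r^2})(r\mathcal{H})=r(\Delta+\f2r\p_r)\mathcal{H}$. Moreover $\f{h_\theta u_r}{r}$ cancels the term $u_r\p_r r\cdot\mathcal{H}$, and $\f{\p_z h_\theta^2}{r}=2r\mathcal{H}\p_z\mathcal{H}$.

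For $2\le p<\infty$, I multiply by $|\mathcal{H}|^{p-2}\mathcal{H}$ and integrate over $\bb{R}^3$ (i.e. $2\pi\int\int\cdot\,r\,dr\,dz$). The four terms are handled as follows.
\begin{itemize}
\item The transport term vanishes since $\na\cd\bm{b}=0$.
\item The diffusion term $-\int\Delta\mathcal{H}\,|\mathcal{H}|^{p-2}\mathcal{H}$ gives $(p-1)\int|\na\mathcal{H}|^2|\mathcal{H}|^{p-2}$.
\item The drift term gives $-\int \f2r\p_r\mathcal{H}\,|\mathcal{H}|^{p-2}\mathcal{H}\,dx=-\f{4\pi}{p}\int\int\p_r|\mathcal{H}|^p\,dr\,dz=\f{4\pi}{p}\int|\mathcal{H}|^p(t,0,z)\,dz\ge0$. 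This term has a good sign.
\item The Hall term gives $2\int\mathcal{H}\p_z\mathcal{H}\,|\mathcal{H}|^{p-2}\mathcal{H}=\f{2}{p+1}\int\p_z(|\mathcal{H}|^p\mathcal{H})\,dx=0$.
\end{itemize}
Hence $\f1p\f{d}{dt}\|\mathcal{H}\|_{L^p}^p+(p-1)\int|\na\mathcal{H}|^2|\mathcal{H}|^{p-2}\le0$. Multiplying by $p$ and integrating in time yields \eqref{ineq:na_H_L2}. Dropping the dissipation gives \eqref{ineq:H_Lp} for finite $p$, and letting $p\to\infty$ gives the case $p=\infty$.

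\textbf{Part (ii).} I take the standard energy estimate: dot the momentum equation with $\bm{u}$ and the magnetic equation with $\bm{h}=h_\theta\bm{e_\theta}$.
\begin{itemize}
\item Pressure and transport terms vanish.
\item The coupling terms $\int\bm{h}\cd\na\bm{h}\cd\bm{u}$ and $\int\bm{h}\cd\na\bm{u}\cd\bm{h}$ cancel. In axisymmetric form these are $-\int\f{h_\theta^2}{r}u_r$ and $+\int\f{h_\theta^2u_r}{r}$.
\item The Hall term vanishes: $\int\na\times[(\na\times\bm{h})\times\bm{h}]\cd\bm{h}=\int[(\na\times\bm{h})\times\bm{h}]\cd(\na\times\bm{h})=0$. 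Equivalently, $\int\f{\p_zh_\theta^2}{r}h_\theta\,r\,dr\,dz=\f23\int\p_z h_\theta^3\,dr\,dz=0$.
\item The resistive term gives $\|\na h_\theta\|_{L^2}^2+\|\f{h_\theta}{r}\|_{L^2}^2=\|\na\bm{h}\|_{L^2}^2$.
\end{itemize}
Integrating in time gives \eqref{ineq:uh} with $C_0=\|(\bm{u}_0,\bm{h}_0)\|_{L^2}^2$.

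\textbf{Main obstacle.} The delicate point is justifying the integrations by parts near the axis $r=0$. This requires knowing that $\mathcal{H}=h_\theta/r$ is regular there, so that the boundary term at $r=0$ in the drift computation is finite and nonnegative and no other boundary contributions arise. I would handle this using the $H^3$ regularity of axisymmetric $\bm{h}$: then $h_\theta$ vanishes on the axis to first order, $\mathcal{H}$ is bounded and smooth in $(r,z)$, and the decay at infinity makes all remaining boundary terms vanish.
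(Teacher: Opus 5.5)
Your proposal is correct and follows the paper's proof essentially line for line. You use the same derivation of the $\mathcal{H}$-equation and the same $L^p$ multiplier: the $\frac{2}{r}\p_r$ drift gives the nonnegative axis term $\frac{4\pi}{p}\int_{\bb{R}}|\mathcal{H}(t,0,z)|^p\,dz$, the Hall term vanishes as $\frac{2}{p+1}\int\p_z(|\mathcal{H}|^p\mathcal{H})\,dx$, and you finish with time integration and $p\to\infty$. For (ii) you write out the standard energy estimate that the paper only cites, and your remark on justifying the integration by parts at $r=0$ covers a point the paper skips (strictly, $H^3$ makes $\mathcal{H}$ bounded rather than smooth, which together with the smoothness assumed for the solution is enough).
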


\begin{proof}
	By $\eqref{eq:Ax_Hall-MHD}_{4}$, we have $\mathcal{H}$ satisfies
\begin{equation} \label{eq:H}
	\p_t \mathcal{H} + (u_r\p_r + u_z\p_z)\mathcal{H} - (\Delta + \frac{2}{r} \p_r) \mathcal{H} - 2 \mathcal{H}\p_z \mathcal{H} = 0.
\end{equation}
Performing the $L^P$ estimate of \eqref{eq:H}

\begin{equation*}
\begin{aligned}
    &\frac{1}{p}\frac{d}{dt}\|\mathcal{H}(t,x)\|_{L^p}^p + (p-1)\int_{\mathbb{R}^3} |\mathcal{H}(t,x)|^{p-2} |\nabla\mathcal{H}(t,x)|^2 dx  \\
    &\qquad =\frac{2}{p}\int_{\mathbb{R}^3}\frac{1}{r} \p_r |\mathcal{H}(t,x) |^p dx + \frac{2}{p+1}\int_{\mathbb{R}^3} \p_z\big(\mathcal{H}(t,x) |\mathcal{H}(t,x) |^p\big) dx\\
    &\qquad  =\frac{4\pi}{p} \int_{\mathbb{R}}\int_0^{\infty} \p_r |\mathcal{H}(t,r,z)|^p dr dz\\
    &\qquad =-\frac{4\pi}{p} \int_{\mathbb{R}} |\mathcal{H}(t,0,z)|^p dz \leq 0.
\end{aligned}
\end{equation*} 
Integrating over $(0,t)$, one derives \eqref{ineq:na_H_L2}. The inequality \eqref{ineq:H_Lp} follows by letting $p \rightarrow \infty$. Meanwhile, the inequality \eqref{ineq:uh} follows from the standard $L^2$ estimate of the system \eqref{eq:Hall-MHD}.
\end{proof}
The following lemmas are the estimates for the heat equation. The first one is the maximal $L_t^qL^p$-regularity for the heat flow, reader can find the proof in Theorem 7.3 of \cite{Lema2002}. For the second redear can find the proof in \cite{Robbin} ( Appendix D.).
\begin{lemma}\label{ineq:heat}
    Let us define the operator $\mathcal{A}$ by the formula
\begin{equation*}
    \mathcal{A} :\q f \longrightarrow \int_{0}^{t} \nabla^2e^{(t-s)\Delta} f(s,\cdot)ds.
\end{equation*}
Then $\mathcal{A}$ is bounded from $L^q(0,T;L^p\big({\mathbb{R}^d})\big)$ to itself for every $T \in (0,\infty]$ and $1<p,q<\infty$.  Moreover, there holds:
\begin{equation*}
    \|\mathcal{A}f\|_{L^q(0,T;L^p)} \leq C\|f\|_{L^q(0,T;L^p)}.
\end{equation*}
\end{lemma}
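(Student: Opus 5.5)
We prove the bound for $\mathcal{A}f=\int_0^t\nabla^2e^{(t-s)\Delta}f(s)\,ds$ by reducing it to the $L^p$-valued boundedness of a singular integral operator in time, with the parabolic structure supplying the needed kernel estimates.

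\textbf{The case $p=q$.} We treat $\mathcal{A}$ as a singular integral in the space–time variables $(t,x)\in\mathbb{R}^{1+d}$. Extending $f$ by zero outside $(0,T)$, we write $\mathcal{A}f=K*f$, where
\[
K(t,x)=\mathbf{1}_{t>0}\,\nabla^2\Big((4\pi t)^{-d/2}e^{-|x|^2/4t}\Big).
\]
Taking the Fourier transform in $(t,x)$, the symbol of $\mathcal{A}$ is
\[
m(\tau,\xi)=-\frac{\xi\otimes\xi}{i\tau+|\xi|^2}.
\]
This symbol is bounded, which gives $L^2_{t,x}$ boundedness by Plancherel. The kernel $K$ is homogeneous of degree $-(d+2)$ with respect to the parabolic dilations $(t,x)\mapsto(\lambda^2t,\lambda x)$. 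It is smooth away from the origin, and it satisfies the Hörmander condition with respect to the parabolic quasi-metric $|x|+|t|^{1/2}$. The Calderón–Zygmund theory on this space of homogeneous type then gives boundedness on $L^p(\mathbb{R}^{1+d})$ for every $1<p<\infty$. Restricting back to $(0,T)$ is harmless, since truncation by the time cutoff commutes with the causal operator.

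\textbf{The case $p\neq q$.} Here we view $\mathcal{A}$ as an operator-valued singular integral in time only:
\[
\mathcal{A}f(t)=\int k(t-s)f(s)\,ds,\qquad k(t)=\mathbf{1}_{t>0}\,\nabla^2e^{t\Delta}\in\mathcal{L}(L^p).
\]
From the analyticity of the heat semigroup on $L^p$ we obtain the bounds
\[
\|k(t)\|\lesssim t^{-1},\qquad \|k'(t)\|\lesssim t^{-2}.
\]
These bounds yield the vector-valued Hörmander condition
\[
\int_{|t|>2|s|}\|k(t-s)-k(t)\|\,dt\le C.
\]
The case already proved, namely boundedness on $L^p(0,T;L^p)$, serves as the initial $L^{q_0}(L^p)$ estimate with $q_0=p$. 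The Benedek–Calderón–Panzone theorem then extends the boundedness to $L^q(0,T;L^p)$ for all $1<q<\infty$.

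\textbf{Constants and the main obstacle.} All constants obtained are independent of $T$, including $T=\infty$, because the kernel bounds are scale-invariant. The main obstacle is verifying the Hörmander-type smoothness of the kernel, especially its operator-valued version. I would handle this step by rescaling to $t=1$ and using the explicit Gaussian kernel.
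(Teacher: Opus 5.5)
Your argument is correct. It is the standard proof of maximal $L^q_tL^p_x$ regularity: parabolic Calder\'on--Zygmund theory on $\mathbb{R}^{1+d}$ for $p=q$, then the Benedek--Calder\'on--Panzone theorem in time with the $\mathcal{L}(L^p)$-valued kernel $\mathbf{1}_{t>0}\nabla^2 e^{t\Delta}$. The one detail you should state explicitly is the case $q>p$: there you apply the theorem to the adjoint kernel acting on $L^{p'}$ and then dualize, which works because that kernel satisfies the same bounds. The paper does not prove this lemma itself; it quotes Theorem 7.3 of Lemari\'e-Rieusset's book, and, as far as I recall, that result is proved by essentially this same two-step scheme.
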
\qed
\begin{lemma}\label{BXXL}
Let $1\le l\le r\le \infty$ and let $\alpha$ be a multi-index. For any $g\in L^l(\bb{R}^3)$ and $t>0$, one has
\[
\|\p^\alpha e^{t\Delta}g\|_{L^r(\bb{R}^3)}
\le C\,t^{-\f{|\alpha|}{2}-\frac{3}{2}\left(\f1l-\f1r\right)} \|g\|_{L^l(\bb{R}^n)}.
\]
In particular, for $l=r=2$,
\[
\|e^{t\Delta}g\|_{L^2} \le C\|g\|_{L^2}.
\]
\end{lemma}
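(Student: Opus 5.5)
The plan is to write the heat semigroup as convolution with the Gaussian kernel, put the derivative on the kernel, and apply Young's convolution inequality. Concretely, for $t>0$ we have $e^{t\Delta}g=G_t*g$ with
\[
G_t(x)=(4\pi t)^{-3/2}e^{-|x|^2/(4t)},\qquad x\in\bb{R}^3 .
\]
For $g\in L^l$ we use $\p^\alpha(G_t*g)=(\p^\alpha G_t)*g$. This is justified because $\p^\alpha G_t$ is a Schwartz function, so differentiation under the integral sign is legitimate.

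The key computation is the scaling of the kernel. Since $G_t(x)=t^{-3/2}G_1(x/\sqrt t)$, the chain rule gives
\[
\p^\alpha G_t(x)=t^{-\f32-\f{|\alpha|}{2}}(\p^\alpha G_1)(x/\sqrt t).
\]
Changing variables $x=\sqrt t\,y$ then yields, for every $s\in[1,\infty]$,
\[
\|\p^\alpha G_t\|_{L^s}=t^{-\f{|\alpha|}{2}-\f32\left(1-\f1s\right)}\|\p^\alpha G_1\|_{L^s}.
\]
The constant $\|\p^\alpha G_1\|_{L^s}$ is finite because $\p^\alpha G_1$ is a polynomial times a Gaussian. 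It depends only on $\alpha$ and $s$.

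Next we choose $s$ through Young's relation $1+\f1r=\f1s+\f1l$, that is, $1-\f1s=\f1l-\f1r$. The hypothesis $1\le l\le r\le\infty$ is exactly what guarantees $0\le \f1l-\f1r\le 1$, and hence $s\in[1,\infty]$. Young's inequality then gives
\[
\|\p^\alpha e^{t\Delta}g\|_{L^r}\le \|\p^\alpha G_t\|_{L^s}\|g\|_{L^l}
\le C\,t^{-\f{|\alpha|}{2}-\f32\left(\f1l-\f1r\right)}\|g\|_{L^l}.
\]
This is the claimed estimate. The norm on the right-hand side is of course over $\bb{R}^3$; the $\bb{R}^n$ in the statement should be read as $\bb{R}^3$.

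For the particular case $l=r=2$ with $\alpha=0$, we get $s=1$. Since $G_t\ge 0$ and $\|G_t\|_{L^1}=1$, this gives $\|e^{t\Delta}g\|_{L^2}\le\|g\|_{L^2}$, with constant $C=1$.

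There is no genuine obstacle in this argument. The only points needing care are the bookkeeping of the scaling exponent, and checking that the admissible range $l\le r$ corresponds precisely to an admissible Young exponent $s\in[1,\infty]$, including the endpoint cases $s=1$ and $s=\infty$.
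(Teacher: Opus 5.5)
Your proof is correct: the scaling identity $\|\partial^\alpha G_t\|_{L^s}=t^{-|\alpha|/2-\frac32(1-\frac1s)}\|\partial^\alpha G_1\|_{L^s}$ combined with Young's inequality at $1-\frac1s=\frac1l-\frac1r$ gives exactly the stated bound, and $l\le r$ is precisely what puts $s$ in $[1,\infty]$. The paper gives no proof of this lemma and only cites an external reference (Appendix D), so there is no argument in the paper to compare against; your Gaussian-kernel-plus-Young argument is the standard proof of this estimate.
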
\qed

The following lemma is a direct consequence of the Biot–Savart law and the $L^p$-boundedness of Calderón–Zygmund singular integral operators; a detailed proof can be found in \cite{MR1902055, MR3030713}.
\begin{lemma}\label{ineq:nau_b}
    Let $\bm{u} = u_r\bm{e_r} + u_\theta\bm{e_\theta} + u_z\bm{e_z}$ be an axially symmetric vector field, $\bm{\omega} = \nabla \times \bm{u} = \omega_r\bm{e_r} + \omega_\theta\bm{e_\theta} + \omega_z\bm{e_z}$ and $\bm{b}=u_r\bm{e_r}+u_z\bm{e_z}$. Then we have
\begin{equation*}
    \|\nabla\bm{u}\|_{L^p} \leq C_p\|\bm{\omega}\|_{L^p},
\end{equation*}
and
\begin{equation*}
    \|\nabla\bm{b}\|_{L^p} \leq C_p\|\omega_\theta\|_{L^P},
\end{equation*}
for all $1<q<\infty$.
\end{lemma}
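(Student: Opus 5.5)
The statement immediately above is Lemma \ref{ineq:nau_b}, a standard consequence of the Biot--Savart law and Calderón--Zygmund theory. I will prove it in two steps: first the full-gradient bound, then the bound for $\bm{b}$, where the axisymmetric structure is what reduces $\bm{\omega}$ to the single component $\omega_\theta$. Throughout, $1<p<\infty$; the ``$q$'' in the statement should read $p$.

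\textbf{Step 1: full gradient.} For $\bm{u}$ divergence free and decaying at infinity (e.g.\ $\bm{u}\in H^3$), we have $-\Delta \bm{u}=\nabla\times\bm{\omega}$. Hence
\[
\bm{u}=\nabla\times(-\Delta)^{-1}\bm{\omega},
\]
and therefore
\[
\partial_j u_i=\sum_{k,l}\epsilon_{ikl}\,\partial_j\partial_k(-\Delta)^{-1}\omega_l=-\sum_{k,l}\epsilon_{ikl}R_jR_k\,\omega_l,
\]
where $R_j$ denotes the Riesz transforms. The Riesz transforms are bounded on $L^p(\mathbb{R}^3)$ for $1<p<\infty$, so $\|\nabla\bm{u}\|_{L^p}\le C_p\|\bm{\omega}\|_{L^p}$. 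Axial symmetry plays no role here.

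\textbf{Step 2: the field $\bm{b}$.} The key observation is that $\bm{b}=u_r\bm{e_r}+u_z\bm{e_z}$ is itself a divergence-free axisymmetric field: $\nabla\cdot\bm{b}=\partial_ru_r+\frac{u_r}{r}+\partial_zu_z=0$ by \eqref{eq:Ax_Hall-MHD}$_5$. Its curl is purely azimuthal. Indeed, for a field with no $\bm{e_\theta}$ component and no $\theta$ dependence, the cylindrical curl formula gives
\[
\nabla\times\bm{b}=(\partial_zu_r-\partial_ru_z)\,\bm{e_\theta}=\omega_\theta\bm{e_\theta}.
\]
So $\bm{b}$ is a divergence-free field with vorticity $\omega_\theta\bm{e_\theta}$. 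Applying Step 1 to $\bm{b}$ gives
\[
\|\nabla\bm{b}\|_{L^p}\le C_p\|\omega_\theta\bm{e_\theta}\|_{L^p}=C_p\|\omega_\theta\|_{L^p},
\]
since $|\bm{e_\theta}|=1$.

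\textbf{Justification of the representation for $\bm{b}$.} This is the only delicate point. We need $\bm{b}$ to be regular across the axis and to decay, so that $\bm{b}=\nabla\times(-\Delta)^{-1}(\omega_\theta\bm{e_\theta})$ holds with no harmonic remainder. Equivalently, $\bm{u}=\bm{b}+u_\theta\bm{e_\theta}$, and the swirl part $u_\theta\bm{e_\theta}$ is also divergence free with curl $\omega_r\bm{e_r}+\omega_z\bm{e_z}$. Since $\bm{u}\in H^3$, both pieces lie in $L^2$: $|\bm{b}|,|u_\theta|\le|\bm{u}|$ pointwise. A divergence-free $L^2$ field with given curl is uniquely determined, because any difference is harmonic and in $L^2$, hence zero. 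The same reasoning, with $\omega_\theta\bm{e_\theta}=\nabla\times\bm{b}$ smooth, identifies $\bm{b}$ with the Biot--Savart integral of $\omega_\theta\bm{e_\theta}$. Once this identification is made, both estimates follow.
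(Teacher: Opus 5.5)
Your proposal is correct and follows the route the paper indicates. The paper gives no argument of its own; it only says the lemma follows from the Biot--Savart law and Calder\'on--Zygmund ($L^p$-boundedness of $R_jR_k$) theory and cites references. Your reduction of the second estimate to the first, via $\nabla\cdot\bm{b}=0$ and $\nabla\times\bm{b}=\omega_\theta\bm{e_\theta}$, is the standard way to obtain it.

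The one point you assert rather than prove is the regularity of $\bm{b}$ across the axis. It is immediate from $\bm{b}(x)=\tfrac12\big(\bm{u}(x)+S\bm{u}(Sx)\big)$ with $S(x_1,x_2,x_3)=(x_1,-x_2,x_3)$, which also gives $\|\bm{b}\|_{H^3}\le\|\bm{u}\|_{H^3}$ and $\nabla\cdot\bm{b}=0$ directly.
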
\qed

The following lemma shows that the $L^p$, $1<p<\infty$, norm of $\na\f{u_r}{r}$ is controlled by the $L^p$ norm of $\Omega$. It's proof can be found in \cite{Lei2015}( equation (A.5) ).
\begin{lemma} \label{ineq:nau_rr}
 	Define $\Omega := \frac{\omega_{\theta}}{r}$. For $1< p < \infty$, there exists an absolute constant $C_p>0$ such that
	\begin{equation*}
		\|\nabla\frac{u_r}{r}(t,\cdot)\|_{L^p} \leq C_p \|\Omega(t,\cdot)\|_{L^p}.
	\end{equation*}
\end{lemma}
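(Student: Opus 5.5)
The statement to prove is Lemma \ref{ineq:nau_rr}: $\|\nabla\frac{u_r}{r}\|_{L^p}\le C_p\|\Omega\|_{L^p}$ for $1<p<\infty$. I would pass to an auxiliary five-dimensional elliptic problem.

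\textbf{Step 1: stream function.} Since $\nabla\cdot\bm b=0$ and $\bm b$ is axisymmetric without swirl, there is a stream function $\psi=\psi(r,z)$ with $\nabla\times(\psi\bm{e_\theta})=\bm b$, i.e.
\[
u_r=-\partial_z\psi,\qquad u_z=\partial_r\psi+\frac{\psi}{r}.
\]
Taking the curl again gives
\[
-\Big(\Delta-\frac1{r^2}\Big)\psi=\omega_\theta .
\]
Set $\phi:=\psi/r$. A direct computation turns this into
\[
-\Big(\partial_r^2+\frac3r\partial_r+\partial_z^2\Big)\phi=\Omega .
\]
The operator on the left is exactly the Laplacian in $\mathbb{R}^5$ acting on functions radial in the first four variables. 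So, regarding $\phi$ and $\Omega$ as functions on $\mathbb{R}^5$ via $r=|y'|$, $y=(y',z)\in\mathbb{R}^4\times\mathbb{R}$, we have $-\Delta_{\mathbb{R}^5}\phi=\Omega$. Moreover $\frac{u_r}{r}=-\partial_z\phi$.

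\textbf{Step 2: Calder\'on--Zygmund on $\mathbb{R}^5$.} Then
\[
\nabla_{\mathbb{R}^5}\frac{u_r}{r}=-\nabla\partial_z(-\Delta_{\mathbb{R}^5})^{-1}\Omega
\]
is a composition of Riesz transforms applied to $\Omega$. Hence
\[
\Big\|\nabla\frac{u_r}{r}\Big\|_{L^p(\mathbb{R}^5)}\le C_p\|\Omega\|_{L^p(\mathbb{R}^5)},\qquad 1<p<\infty .
\]
For the $\mathbb{R}^3$ bound, note that $\nabla_{\mathbb{R}^3}\frac{u_r}{r}$ has the same pointwise size as $(\partial_r,\partial_z)\frac{u_r}{r}$, and the same is true in $\mathbb{R}^5$.

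\textbf{Step 3: transfer back to $\mathbb{R}^3$ (main obstacle).} The norms do not match. In $\mathbb{R}^5$ the measure is $c\,r^3\,dr\,dz$, while in $\mathbb{R}^3$ it is $c\,r\,dr\,dz$, so the Riesz-transform bound comes with the weight $r^2$. I would handle this by proving boundedness of the Riesz transforms in $\mathbb{R}^5$ with the power weight $|y'|^{-2}$, i.e.\ $L^p(\mathbb{R}^5,|y'|^{-2}dy)$. Since $|y'|^{-2}$ on $\mathbb{R}^4$ is a Muckenhoupt $A_p$ weight exactly when $-4<-2<4(p-1)$, i.e.\ for every $p>1$, and $A_p(\mathbb{R}^4)$ weights extend trivially to $A_p(\mathbb{R}^5)$ weights independent of $z$, weighted Calder\'on--Zygmund theory applies for all $1<p<\infty$. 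Restricted to axisymmetric functions, this weighted norm coincides with the $L^p(\mathbb{R}^3)$ norm up to constants, and the claim follows.

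Two technical points remain. One is justifying the representation $\phi=(-\Delta_{\mathbb{R}^5})^{-1}\Omega$, meaning uniqueness of decaying solutions; this follows from $\bm u\in H^3$ via a Liouville argument. The other is checking that the $A_p$ range covers all $p$, which the computation above confirms. If the weighted approach proved awkward, the fallback is to write the kernel of $\nabla\partial_z\Delta^{-1}$ averaged over the $S^3$ angle, and show the averaged kernel is a standard Calder\'on--Zygmund kernel on the half-plane with measure $r\,dr\,dz$.
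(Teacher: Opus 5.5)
Your proof is correct, and it takes a different route from the one the paper relies on. The paper gives no argument of its own and simply quotes the bound from \cite{Lei2015}. The standard proof of this estimate stays in $\mathbb{R}^3$ and starts from the representation
\[
\frac{u_r}{r}=\partial_z\Delta^{-1}\Omega-2\,\frac{\partial_r}{r}\,\Delta^{-2}\partial_z\Omega .
\]
So $\nabla\frac{u_r}{r}$ splits into an ordinary Calder\'on--Zygmund term and a term $\nabla\bigl(\frac{1}{r}\partial_r F\bigr)$ with $F=\Delta^{-2}\partial_z\Omega$. The second term is not a convolution operator, so it has to be controlled by hand using axisymmetry, via Hardy-type bounds showing it is dominated by $\nabla^3F$.

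Your lift to $\mathbb{R}^5$ removes that term altogether. Since $\partial_r^2+\frac3r\partial_r+\partial_z^2$ is the genuine Laplacian there, only one pair of Riesz transforms appears. The mismatch of measures, $r\,dr\,dz$ versus $r^3\,dr\,dz$, is absorbed by the weight $|y'|^{-2}$. That weight is even in $A_1(\mathbb{R}^4)$, and its $z$-independent extension lies in $A_p(\mathbb{R}^5)$ with the same constant. Your identification of the weighted norm and the pointwise equality $|\nabla_{\mathbb{R}^5}f|=|\nabla_{\mathbb{R}^3}f|$ are both right.

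The trade-off is this. You use weighted singular-integral theory (Coifman--Fefferman) as a black box, where the standard proof needs only unweighted estimates plus elementary axisymmetric identities. In return you get the whole range $1<p<\infty$ at once, and also $\|\nabla^2(\psi/r)\|_{L^p(\mathbb{R}^3)}\lesssim\|\Omega\|_{L^p(\mathbb{R}^3)}$ at no extra cost.

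For the identification step, it is cleanest to apply Liouville to second derivatives rather than to $\phi$ itself. For $H^3$ data, $\partial_j\partial_z\phi$ and $\partial_j\partial_z(-\Delta_{\mathbb{R}^5})^{-1}\Omega$ both lie in $L^2(\mathbb{R}^5,|y'|^{-2}dy)$. Their difference is a harmonic tempered distribution, hence a polynomial, and no nonzero polynomial belongs to that space. You do need $-\Delta_{\mathbb{R}^5}\phi=\Omega$ to hold across $\{y'=0\}$; it does, because that set has codimension four and $\phi$ is locally bounded there. Your kernel-averaging fallback is therefore not needed.
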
 \qed

The following lemma gives the uniform bound to the transport equation whose velocity field is given by the divergence-free vector $\tilde{\bm{b}}:=v_r\bm{e_r}+v_z\bm{e_z}$.
\begin{lemma} \label{ineq:uLp}
	Let $\bm{v}(t,\cdot) : \mathbb{R} ^3 \mapsto \mathbb{R} ^m$, whose initial date $\bm{v_0}\in L^p$ with $1\leq p\leq \infty$ and $m\in \mathbb{N}$, be a weak solution to
	\begin{equation*}
		\p_t \bm{v} + \tilde{\bm{b}}\cdot \nabla\bm{v} = \bm{A}\cdot \bm{v} + \bm{F}.
	\end{equation*}
	Here $\bm{A}(t,\cdot): \mathbb{R}^3 \mapsto \mathbb{R}^{m \times m}$ is uniformly bounded, while $\bm{F}(t,\cdot): \mathbb{R}^3 \mapsto \mathbb{R}^m$ belongs to $L^p$. Then $\bm{v}$ satisfies
	\begin{equation*}
		\|\bm{v}(t,\cdot)\|_{L^p} \leq \|\bm{v_0}\|_{L^p} + \int_0^t \left( \|\bm{A}(s,\cdot)\|_{L^\infty}\|\bm{v}(s,\cdot)\|_{L^\infty} + \|\bm{F}(s,\cdot)\|_{L^p} \right)ds,
	\end{equation*}
	and
	\begin{equation*}
		\|\bm{v}(t,\cdot)\|_{L^p} \leq \left( \|\bm{v}_0\|_{L^p} + \int_0^t \|\bm{F}(s,\cdot)\|_{L^p} ds \right) \exp\left( \int_0^t \|\bm{A}(s,\cdot)\|_{L^\infty} ds \right).
	\end{equation*}
\end{lemma}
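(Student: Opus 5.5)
The plan is the standard $L^p$ estimate along the flow of the divergence-free field $\tilde{\bm b}$, treating the two inequalities separately.

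For $1\le p<\infty$, first assume $\bm v$ smooth and decaying; the weak case then follows by mollifying the data and the coefficients and passing to the limit. Multiply the equation by $|\bm v|^{p-2}\bm v$ and integrate over $\mathbb{R}^3$. Since $\nabla\cdot\tilde{\bm b}=0$, the transport term vanishes after integration by parts:
\[
\int_{\mathbb{R}^3} \tilde{\bm b}\cdot\nabla\frac{|\bm v|^p}{p}\,dx=0 .
\]
This leaves
\[
\frac{1}{p}\frac{d}{dt}\|\bm v\|_{L^p}^p\le \|\bm A\|_{L^\infty}\|\bm v\|_{L^p}^p+\|\bm F\|_{L^p}\|\bm v\|_{L^p}^{p-1},
\]
where the second term uses H\"older's inequality. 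Dividing by $\|\bm v\|_{L^p}^{p-1}$ gives
\[
\frac{d}{dt}\|\bm v\|_{L^p}\le \|\bm A\|_{L^\infty}\|\bm v\|_{L^p}+\|\bm F\|_{L^p}.
\]
To justify the division where $\|\bm v\|_{L^p}$ vanishes, work with $(\|\bm v\|_{L^p}^p+\delta)^{1/p}$ and let $\delta\to0$. Gr\"onwall's inequality then gives the second (exponential) bound. The case $p=\infty$ follows by letting $p\to\infty$, or directly along characteristics: along each trajectory $X(t)$ of $\tilde{\bm b}$ one has $\frac{d}{dt}|\bm v(t,X(t))|\le|\bm A||\bm v|+|\bm F|$.

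The first inequality is meant to be an integral form and should be read with $\|\bm A\bm v\|_{L^p}\le\|\bm A\|_{L^\infty}\|\bm v\|_{L^p}$. As printed, with $\|\bm v\|_{L^\infty}$, it is a typo unless $p=\infty$. To prove it, integrate the differential inequality above in time directly, without applying Gr\"onwall.

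There is no real obstacle here. The only delicate point is justifying the integration by parts for weak solutions. This is handled by the DiPerna--Lions renormalization, or simply by the regularity available in the application, where $\tilde{\bm b}=\bm b\in H^3$ is Lipschitz and the flow map is a measure-preserving diffeomorphism. In the Lipschitz case one can instead write $\bm v$ along the flow via Duhamel's formula, $\bm v(t,X(t,x))=\bm v_0(x)+\int_0^t(\bm A\bm v+\bm F)(s,X(s,x))\,ds$. Taking $L^p_x$ norms and using Minkowski's inequality and the fact that the flow preserves measure then yields both bounds at once.
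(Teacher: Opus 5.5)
Your proof is correct and is the standard argument. The paper states this lemma without proof, but your computation (multiply by $|\bm v|^{p-2}\bm v$, kill the transport term using $\nabla\cdot\tilde{\bm b}=0$, cancel $\|\bm v\|_{L^p}^{p-1}$, apply Gr\"onwall) is exactly the one the paper runs for $h_\theta$ and $u_\theta/r$ in Propositions \ref{ineq:h_the} and \ref{ineq:na_b_Lp}, and your Duhamel-along-the-measure-preserving-flow variant also covers $p=1$ and $p=\infty$ without regularizing the multiplier. You are also right that $\|\bm v\|_{L^\infty}$ in the first inequality must read $\|\bm v\|_{L^p}$ for $p<\infty$: with $\tilde{\bm b}=0$, $\bm F=0$, $\bm A\equiv a>0$ and $\bm v_0=\mathbf{1}_B$, $|B|>1$, the printed version fails by $(|B|^{1/p}-1)(e^{at}-1)$, and the corrected form is how the paper actually uses it in \eqref{ineq:Omega}.
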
 \qed

Lastly we focus on the following two lemmas, the first one is the estimation of the triple product form with commutators. The second is logarithm inequality.
\begin{lemma}\label{ineq:commu}
Let $m \in \mathbb{N}$ and $m \geq 2, \boldsymbol{f}, \boldsymbol{g}, \boldsymbol{k} \in C_0^{\infty}\left(\mathbb{R}^3\right)$. The following estimates hold:
$$
\left|\int_{\mathbb{R}^3}\left[\nabla^m, \boldsymbol{f} \cdot \nabla\right] \boldsymbol{g} \nabla^m \boldsymbol{k} d x\right| \leq C\left\|\nabla^m(\boldsymbol{f}, \boldsymbol{g}, \boldsymbol{k})\right\|_{L^2}^2\|\nabla(\boldsymbol{f}, \boldsymbol{g})\|_{L^{\infty}} .
$$
\end{lemma}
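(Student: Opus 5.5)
The plan is to expand the commutator with the Leibniz rule and estimate each term by Hölder and the Gagliardo--Nirenberg inequality (Lemma \ref{ineq:G_N}). Writing $\partial^\alpha$ with $|\alpha|=m$,
\[
[\partial^\alpha,\bm f\cdot\nabla]\bm g=\sum_{0<\beta\le\alpha}\binom{\alpha}{\beta}\,\partial^\beta \bm f\cdot\nabla\partial^{\alpha-\beta}\bm g ,
\]
so every term has the form $\nabla^{j}\bm f\,\nabla^{m+1-j}\bm g$ with $1\le j\le m$. After pairing with $\nabla^m\bm k$ and applying Cauchy--Schwarz, it suffices to prove
\[
\|\nabla^{j}\bm f\,\nabla^{m+1-j}\bm g\|_{L^2}\le C\big(\|\nabla\bm f\|_{L^\infty}\|\nabla^m\bm g\|_{L^2}+\|\nabla\bm g\|_{L^\infty}\|\nabla^m\bm f\|_{L^2}\big),
\]
and then use $ab\le \tfrac12(a^2+b^2)$ to reach $\|\nabla^m(\bm f,\bm g,\bm k)\|_{L^2}^2$ times $\|\nabla(\bm f,\bm g)\|_{L^\infty}$.

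The endpoint terms are direct. For $j=1$ one has $\|\nabla\bm f\|_{L^\infty}\|\nabla^m\bm g\|_{L^2}$, and for $j=m$ one has $\|\nabla^m\bm f\|_{L^2}\|\nabla\bm g\|_{L^\infty}$.

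For $2\le j\le m-1$ (possible only when $m\ge3$), set $F=\nabla\bm f$ and $G=\nabla\bm g$, so the term is $\nabla^{j-1}F\,\nabla^{m-j}G$ with total order $m-1$ on $(F,G)$. Let $k=j-1$ and $\theta=k/(m-1)\in(0,1)$. Use Hölder with exponents $p_1=2(m-1)/k$ and $p_2=2(m-1)/(m-1-k)$, so that $1/p_1+1/p_2=1/2$. Lemma \ref{ineq:G_N}, with $q=\infty$, $r=2$ and top order $m-1$, gives
\[
\|\nabla^{k}F\|_{L^{p_1}}\le C\|\nabla^{m-1}F\|_{L^2}^{\theta}\|F\|_{L^\infty}^{1-\theta},
\]
and the analogous bound for $\nabla^{m-1-k}G$ with exponent $1-\theta$. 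The exponent relation is the standard one: $1/p_1 = k/3+\theta(1/2-(m-1)/3)$ holds with $\theta=k/(m-1)$, since $k/3-\theta(m-1)/3=0$ and $\theta/2=k/(2(m-1))$. Multiplying the two bounds and applying Young's inequality gives
\[
\|\nabla^{m}\bm f\|_{L^2}^{\theta}\|\nabla\bm g\|_{L^\infty}^{\theta}\,\|\nabla\bm f\|_{L^\infty}^{1-\theta}\|\nabla^m\bm g\|_{L^2}^{1-\theta}\le \|\nabla^m\bm f\|_{L^2}\|\nabla\bm g\|_{L^\infty}+\|\nabla\bm f\|_{L^\infty}\|\nabla^m\bm g\|_{L^2}.
\]

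The main obstacle I expect is legitimately applying Lemma \ref{ineq:G_N} with $q=\infty$. Exception (i) does not apply since $j\ge1$ there. Exception (ii) needs $m-1-k-3/2\notin\mathbb N$, which always holds because this quantity is a half-integer. The case $\alpha=1$ is not used here since $\theta<1$. Because $\bm f,\bm g\in C_0^\infty$, all norms are finite. If the $L^\infty$ endpoint turns out to be delicate, I would fall back on the classical Kato--Ponce/Moser tame estimate, which yields exactly the bound above. Summing over all $\alpha$ and $\beta$ then finishes the proof.
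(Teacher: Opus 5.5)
Your proof is correct and has the same structure as the paper's. Both arguments apply Cauchy--Schwarz against $\nabla^m\boldsymbol{k}$, then the tame commutator bound $\|[\nabla^m,\boldsymbol{f}\cdot\nabla]\boldsymbol{g}\|_{L^2}\le C(\|\nabla\boldsymbol{f}\|_{L^\infty}\|\nabla^m\boldsymbol{g}\|_{L^2}+\|\nabla\boldsymbol{g}\|_{L^\infty}\|\nabla^m\boldsymbol{f}\|_{L^2})$, then $ab\le\frac12(a^2+b^2)$. The only difference is that the paper cites Kato--Ponce for the commutator bound, whereas you prove it for integer $m$ (all that is needed here) using the Leibniz expansion, H\"older with exponents $2(m-1)/k$ and $2(m-1)/(m-1-k)$, and the $L^\infty$-endpoint Gagliardo--Nirenberg inequality of Lemma \ref{ineq:G_N}. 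Your exponent bookkeeping and your check that neither exceptional case applies are correct, so your argument is a self-contained version of the cited step.
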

\begin{proof}
    We apply the H\"older inequality, one derives
$$
\left|\int_{\mathbb{R}^3}\left[\nabla^m, \boldsymbol{f} \cdot \nabla\right] \boldsymbol{g} \nabla^m \boldsymbol{k} d x\right| \leq\left\|\left[\nabla^m, \boldsymbol{f} \cdot \nabla\right] \boldsymbol{g}\right\|_{L^2}\left\|\nabla^m \boldsymbol{k}\right\|_{L^2}.
$$
Due to the commutator estimate by Kato-Ponce\cite{MR951744}, it follows that
$$
\left\|\left[\nabla^m, \boldsymbol{f} \cdot \nabla\right] \boldsymbol{g}\right\|_{L^2} \leq C\left(\|\nabla \boldsymbol{f}\|_{L^{\infty}}\left\|\nabla^m \boldsymbol{g}\right\|_{L^2}+\|\nabla \boldsymbol{g}\|_{L^{\infty}}\left\|\nabla^m \boldsymbol{f}\right\|_{L^2}\right) .
$$
\end{proof}

\begin{lemma}[See \cite{Li2022}, Corollary 2.8] \label{ineq:BMO}
	For any divergence free vector field $\boldsymbol{\boldsymbol{g}}: \mathbb{R}^3 \rightarrow \mathbb{R}^3$ such that $\boldsymbol{\boldsymbol{g}} \in 
    H^3\left(\mathbb{R}^3\right)$, the following estimate holds:
    \begin{equation*}
    \begin{aligned}
	\|\nabla \boldsymbol{\boldsymbol{g}}\|_{L^{\infty}\left(\mathbb{R}^3\right)} \lesssim 1+\|\nabla \times \boldsymbol{\boldsymbol{g}}\|_{B M O\left(\mathbb{R}^3\right)} \log \left(e+\|\boldsymbol{\boldsymbol{g}}\|_{H^3\left(\mathbb{R}^3\right)}\right).
	\end{aligned}
    \end{equation*}
\end{lemma}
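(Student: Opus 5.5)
The plan is a Littlewood--Paley splitting in the spirit of the Kozono--Taniuchi logarithmic Sobolev inequality, applied directly to $\nabla\bm{g}$. Write $\nabla\bm{g}=\sum_{j\in\mathbb{Z}}\Delta_j\nabla\bm{g}$ with homogeneous dyadic blocks $\Delta_j$. This sum converges in $L^\infty$ because $\bm{g}\in H^3$, as the tail estimates below show. Fix an integer $N\ge1$, to be chosen at the end, and bound
\[
\|\nabla\bm{g}\|_{L^\infty}\le \sum_{j<-N}\|\Delta_j\nabla\bm{g}\|_{L^\infty}+\sum_{|j|\le N}\|\Delta_j\nabla\bm{g}\|_{L^\infty}+\sum_{j>N}\|\Delta_j\nabla\bm{g}\|_{L^\infty}.
\]

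\emph{Tails.} By Bernstein's inequality, $\|\Delta_j\nabla\bm{g}\|_{L^\infty}\lesssim 2^{j}2^{3j/2}\|\Delta_j\bm{g}\|_{L^2}$. For $j<-N$ this is at most $C2^{5j/2}\|\bm{g}\|_{L^2}$, and the sum over $j<-N$ is bounded by $C2^{-5N/2}\|\bm{g}\|_{H^3}$. For $j>N$ I use instead $\|\Delta_j\nabla\bm{g}\|_{L^\infty}\lesssim 2^{3j/2}2^{-2j}\|\Delta_j\nabla^3\bm{g}\|_{L^2}$, so the sum over $j>N$ is bounded by $C2^{-N/2}\|\bm{g}\|_{H^3}$.

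\emph{Middle range.} Here I use the divergence-free condition. Since $\nabla\cdot\bm{g}=0$, we have $-\Delta\bm{g}=\nabla\times(\nabla\times\bm{g})$, and hence
\[
\nabla\bm{g}=\nabla(-\Delta)^{-1}\nabla\times(\nabla\times\bm{g})=:\mathcal{R}(\nabla\times\bm{g}),
\]
where $\mathcal{R}$ is a matrix of zero-order Fourier multipliers (products of Riesz transforms). Composing with $\Delta_j$ gives an operator $\Delta_j\mathcal{R}$. Its kernel is $2^{3j}K(2^j\cdot)$ for a fixed Schwartz function $K$ whose Fourier transform is supported in an annulus. In particular $K$ has integral zero and belongs to the Hardy space $\mathcal{H}^1$, with norm independent of $j$ by scaling. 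By $\mathcal{H}^1$--BMO duality this yields the uniform bound
\[
\|\Delta_j\nabla\bm{g}\|_{L^\infty}\le C\|\nabla\times\bm{g}\|_{BMO}\quad\text{for all } j\in\mathbb{Z}.
\]
Summing over $|j|\le N$ gives at most $C(2N+1)\|\nabla\times\bm{g}\|_{BMO}$.

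\emph{Choice of $N$.} Take $N$ to be the smallest integer with $2^{N/2}\ge e+\|\bm{g}\|_{H^3}$. Then both tails are $O(1)$, and $N\lesssim\log(e+\|\bm{g}\|_{H^3})$, which gives exactly the claimed estimate.

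The only genuinely delicate point is the uniform bound $\|\Delta_j\mathcal{R}f\|_{L^\infty}\lesssim\|f\|_{BMO}$. The cancellation (mean zero) of the localized kernel is what allows BMO rather than $L^\infty$ on the right-hand side, so I would verify carefully that the scaled kernel's $\mathcal{H}^1$ norm is $j$-independent. Everything else is routine Bernstein bookkeeping.
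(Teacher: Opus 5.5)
Your proof is correct and complete. The paper does not prove this lemma at all; it imports it from \cite{Li2022}, Corollary 2.8.

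The usual way to obtain such a bound takes two steps. First apply the Kozono--Taniuchi inequality $\|f\|_{L^\infty}\lesssim 1+\|f\|_{BMO}\log(e+\|f\|_{H^2})$ to $f=\nabla\bm{g}$. Then use $\|\nabla\bm{g}\|_{BMO}\lesssim\|\nabla\times\bm{g}\|_{BMO}$, which follows from the boundedness of the Calder\'on--Zygmund operators $\nabla(-\Delta)^{-1}\nabla\times$ on BMO.

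Your argument fuses these two steps at the level of dyadic blocks. Because the symbol of $\mathcal{R}$ is homogeneous of degree zero, $\Delta_j\mathcal{R}$ has symbol $(m\varphi)(2^{-j}\xi)$. So all you need is the uniform bound $\|\Delta_j\mathcal{R}\omega\|_{L^\infty}\lesssim\|\omega\|_{BMO}$, which comes from a mean-zero, rapidly decaying kernel that scales correctly. This buys you two things: you never need BMO-boundedness of singular integrals (a subtler statement, since $\mathcal{R}\omega$ must then be defined modulo constants), and the absolute nature of the constant is transparent. The cited route, in exchange, is just two black-box citations.

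Two small points are worth making explicit in a write-up:
\begin{itemize}
\item The homogeneous series $\sum_j\Delta_j\nabla\bm{g}$ converges to $\nabla\bm{g}$ itself, not merely modulo polynomials. This is because $\nabla\bm{g}\in L^2$, so the $L^2$ limit and the absolutely convergent $L^\infty$ limit coincide. Your tail estimates alone show only that the series converges.
\item Since $\omega=\nabla\times\bm{g}\in L^2$, the integral $\int K_j(x-y)\,\omega(y)\,dy$ converges absolutely. You may therefore subtract the average $\omega_{B(x,2^{-j})}$ and bound the result directly using the decay $|K(y)|\lesssim(1+|y|)^{-4}$. This avoids appealing to the full $\mathcal{H}^1$--BMO duality theorem for an arbitrary BMO function.
\end{itemize}
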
	\qed

\section{The proof of Theorem 1.1}\label{proof}
This section is devoted to the proof of Theorem \ref{theo}. Applying Lemma \eqref{ineq:uLp} to $\eqref{eq:w}_{1,3}$, we get 
\begin{equation} \label{ineq:prio1}
\begin{aligned}
	\|\nabla \times (u_{\theta}\bm{e_{\theta}}) (t,\cdot)\|_{L^\infty} &= \|(\omega_r,\omega_z)(t,\cdot)\|_{L^\infty} \\
	& \leq \|\nabla \times (u_{0,\theta}\bm{e_{\theta}})\|_{L^\infty}\exp\left( \int_0^t \|\nabla \bm{u}(s,\cdot)\|_{L^\infty} ds \right)\\
	& \leq \vep \exp\left( C\int_0^t \|\bm{u}(s,\cdot)\|_{H^3} ds\right).
\end{aligned}
\end{equation}
By local well-posedness of Hall-MHD (readers can refer to \cite{li2025}), there exists $T_0 = C_0(\|(\bm{u_0}, \bm{h_0}\|_{H^3})$ such that
\begin{equation*}
	\|(\bm{u},\bm{h})(t,\cdot)\|_{H^3} \leq C\|(\bm{u_{0}},\bm{h_0})\|_{H^3},\q \text{for all } t\in[0,T_0).
\end{equation*}
And for any $t\in[0,T_0)$, the estimate \eqref{ineq:prio1} indicates 
\begin{equation*}
	\|\nabla \times (u_{\theta}\bm{e_{\theta}})\|_{L^\infty} \leq \vep \exp\left( C\|\bm{u_0}\|_{H^3} T_0 \right) \leq \vep \exp\left( CC_0 \right), \q \text{for all } t\in[0,T_0).
\end{equation*}
Thus there exists $T_*\geq \min\{T_0, \vep^{-1}\exp\left( -CC_0 \right)\} > 0$, such that
\begin{equation}\label{ineq:prio2}
	t\sup_{0\leq s \leq t} \|\nabla \times (u_{\theta}\bm{e_{\theta}})(s,\cdot)\|_{L^\infty} \leq 1, \q\text{for all } t\in[0,T_*).
\end{equation}
The following argument is carried before the aforementioned $T_*$.

We begin with the following $\|\O\|_{L^\infty_{T_*}(L^2\cap L^6)}$ and $\|\mathcal{H}\|_{L^\infty_{T_*}\dot{H}^1\cap L^2_{T_*}\dot{H}^2}$ estimates. Here goes the proposition. 

\begin{proposition} \label{OH2}
    Define $\Omega:=\frac{\omega_\theta}{r}$ and $\cal{H}:=\frac{h_\theta}{r}$. Assume that $\nabla \cdot \bm{u}_0=h_0^r=h_0^z \equiv 0$. Let $(\bm{u}, \bm{h})$, satisfying \eqref{ineq:prio2}, be the unique local axially symmetric solution of \eqref{eq:Hall-MHD} with the initial data $\left(\bm{u}_0, \bm{h}_0\right) \in H^3\left(\mathbb{R}^3\right)$. The following $\left(L_{T_*}^{\infty}\left(L^2 \cap L^6\right)\right) \times\left(L_{T_*}^{\infty} \dot{H}^1 \cap L_{T_*}^2 \dot{H}^2\right)$ estimate of $(\Omega, \cal{H})$ holds for any $0<t\leq T_*$
    \begin{equation}\label{EP3211}
    \begin{aligned}
        &\sup _{0 \leq s \leq t}\left(\|\Omega(s, \cdot)\|_{L^2 \cap L^6}^2+\|\nabla \cal{H}(s, \cdot)\|_{L^2}^2\right)+\int_0^{t}\left\|\nabla^2 \cal{H}(s, \cdot)\right\|_{L^2}^2 d s\\
         & \qquad \qquad \leq \exp\big(C(1+E_0)^{4/3}(1+t)^{5/3}\big).
        \end{aligned}
    \end{equation}
    Here $C>0$ is an absolute constant.
\end{proposition}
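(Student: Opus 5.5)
We couple an $L^p$ estimate for $\Omega$ ($p=2,6$) with an $\dot H^1$ energy estimate for $\cal{H}$ and close the pair by Gronwall.

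\textbf{The $\Omega$ estimate.} We use the transport equation
\[
\p_t\Omega+\bm b\cdot\na\Omega=-\p_z\cal{H}^2-2\f{u_\theta}{r}J .
\]
Since $\bm b$ is divergence free, Lemma \ref{ineq:uLp} with $\bm A=0$ gives, for $p\in\{2,6\}$,
\[
\|\Omega(t)\|_{L^p}\le \|\Omega_0\|_{L^p}+\int_0^t \Big(2\|\cal{H}\p_z\cal{H}\|_{L^p}+2\big\|\tfrac{u_\theta}{r}J\big\|_{L^p}\Big)\,ds .
\]

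\emph{Swirl term.} We have $|u_\theta/r|\lesssim \|\p_r u_\theta+u_\theta/r\|_{L^\infty}$, since $u_\theta/r$ is an average of $\omega_z$ on the disc of radius $r$. Hence $\|u_\theta/r\|_{L^\infty}\lesssim\|\omega_z\|_{L^\infty}$, and by \eqref{ineq:prio2} this is at most $1/s$ on $[0,T_*)$. For $J=\omega_r/r=-\p_z u_\theta/r$ we use the equation $\p_tJ+\bm b\cdot\na J=(\omega_r\p_r+\omega_z\p_z)\f{u_r}{r}$. Its $L^p$ estimate gives
\[
\|J\|_{L^p}\le \|J_0\|_{L^p}+\int_0^t\|(\omega_r,\omega_z)\|_{L^\infty}\big\|\na\tfrac{u_r}{r}\big\|_{L^p}\,ds
\lesssim \|J_0\|_{L^p}+\int_0^t\tfrac1s\|\Omega\|_{L^p}\,ds
\]
by Lemma \ref{ineq:nau_rr}. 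The $1/s$ singularity is not integrable as written, so we handle it differently. We use the product form: the swirl contributes $\|u_\theta/r\|_{L^\infty}\|J\|_{L^p}$, and we estimate $J$ and $\Omega$ jointly as a system. The forcing $\|\omega\|_{L^\infty}$ then appears linearly, with $t\sup\|(\omega_r,\omega_z)\|_{L^\infty}\le1$. Over $[0,t]$ this yields a factor $\exp(Ct\sup\|(\omega_r,\omega_z)\|_{L^\infty})\le e^{C}$ in front of the remaining terms. Equivalently, we first bound $\sup_s s\|(\omega_r,\omega_z)\|_{L^\infty}$ and combine it with a coarser bound $\|u_\theta/r\|_{L^\infty}\le \|u_{\theta,0}/r\|_{L^\infty}$ (conserved, since $ru_\theta$ is transported) near $s=0$. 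This gives
\[
\|(\Omega,J)(t)\|_{L^p}\lesssim \|(\Omega_0,J_0)\|_{L^p}+\int_0^t\|\cal{H}\na\cal{H}\|_{L^p}\,ds ,
\]
with $\|(\Omega_0,J_0)\|_{L^p}\lesssim E_0^{1/2}$ by axisymmetric Hardy-type bounds.

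\emph{Hall term.} For $p=2$,
\[
\|\cal{H}\na\cal{H}\|_{L^2}\le\|\cal{H}\|_{L^\infty}\|\na\cal{H}\|_{L^2}\le \|\cal{H}_0\|_{L^\infty}\|\na\cal{H}\|_{L^2}.
\]
For $p=6$,
\[
\|\cal{H}\na\cal{H}\|_{L^6}\le \|\cal{H}_0\|_{L^\infty}\|\na\cal{H}\|_{L^6}\lesssim \|\cal{H}_0\|_{L^\infty}\|\na^2\cal{H}\|_{L^2}.
\]
Here $\|\cal{H}_0\|_{L^\infty}\lesssim \|\na h_0\|_{L^\infty}\lesssim E_0^{1/2}$.

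\textbf{The $\cal{H}$ estimate.} We test \eqref{eq:H} with $-\Delta\cal{H}$. The operator $\f2r\p_r$ gives a favourable sign, as in the five-dimensional radial Laplacian: $\Delta+\f2r\p_r$ is the 5D Laplacian acting on $\cal{H}$, and we can work with that measure. This yields
\[
\f{d}{dt}\|\na\cal{H}\|_{L^2}^2+\|\na^2\cal{H}\|_{L^2}^2\lesssim \int|\na\bm b||\na\cal{H}|^2+\int|\cal{H}||\na\cal{H}||\na^2\cal{H}|+\int|\na\cal{H}|^3 .
\]
We bound the pieces as follows.
\begin{itemize}
\item $\int|\na\bm b||\na\cal{H}|^2\le\|\omega_\theta\|_{L^3}\|\na\cal{H}\|_{L^3}^2$ by Lemma \ref{ineq:nau_b}. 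Interpolating $\|\omega_\theta\|_{L^3}$ between $\|\omega_\theta\|_{L^2}\le C_0^{1/2}$ and $\|r\Omega\|$-type bounds gives a polynomial in $\|\Omega\|_{L^2\cap L^6}$ times $E_0$ factors. By Gagliardo--Nirenberg, $\|\na\cal{H}\|_{L^3}^2\lesssim\|\na\cal{H}\|_{L^2}\|\na^2\cal{H}\|_{L^2}$.
\item The Hall terms are controlled by $\|\cal{H}_0\|_{L^\infty}^2\|\na\cal{H}\|_{L^2}^2$ after Young's inequality.
\item $\int|\na\cal{H}|^3\lesssim\|\na\cal{H}\|_{L^2}^{3/2}\|\na^2\cal{H}\|_{L^2}^{3/2}$. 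This is the worst piece, and we absorb it with Young's inequality, giving $\|\na\cal{H}\|_{L^2}^6$.
\end{itemize}
The sixth-power term is too strong for a plain Gronwall argument. We avoid it by instead using $\int|\cal{H}||\na\cal{H}||\na^2\cal{H}|$ with $\|\cal{H}\|_{L^\infty}$ bounded, since the Hall term is exactly $2\cal{H}\p_z\cal{H}$ and is not cubic in gradients. Together with \eqref{ineq:na_H_L2}, which gives $\int_0^t\|\na\cal{H}\|_{L^2}^2\le\|\cal{H}_0\|_{L^2}^2$, the remaining coupling is linear in $\|\na\cal{H}\|_{L^2}^2$ with coefficient $\|\omega_\theta\|_{L^3}^2$.

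\textbf{Closing.} Set
\[
Y(t)=\sup_{s\le t}\big(\|\Omega\|_{L^2\cap L^6}^2+\|\na\cal{H}\|_{L^2}^2\big)+\int_0^t\|\na^2\cal{H}\|_{L^2}^2 .
\]
Combining the two estimates gives $Y\lesssim(1+E_0)\big(1+t\int_0^t\|\na^2\cal{H}\|_{L^2}^2\big)+\int_0^t \|\omega_\theta\|_{L^3}^2\|\na\cal{H}\|_{L^2}^2$. The main obstacle is the term $\|\omega_\theta\|_{L^3}^2$. We bound it by $\|\omega_\theta\|_{L^2}^{a}\|\omega_\theta\|_{L^\infty}^{b}$ and use $\|\omega_\theta\|_{L^\infty}\lesssim\|r\Omega\|$, with interpolation tuned so that powers $(1+E_0)^{4/3}$ and $(1+t)^{2/3}$ appear. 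A Gronwall step over $[0,t]$ then yields $\exp\big(C(1+E_0)^{4/3}(1+t)^{5/3}\big)$. The exponents $4/3$ and $5/3$ come from Young's inequality with the exponent pair $(3,3/2)$ applied to the time integrals. I expect getting exactly these exponents to be the delicate bookkeeping.
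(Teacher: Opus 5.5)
\paragraph{Comparison with the paper.} Your $(\Omega,J)$ part is essentially the paper's argument: coupled $L^p$ estimates for \eqref{eq:omega}, closed by Gr\"onwall using
\[
\int_0^t\|(\omega_r,\omega_z)(s)\|_{L^\infty}\,ds\le t\sup_{s\le t}\|(\omega_r,\omega_z)(s)\|_{L^\infty}\le 1 .
\]
This is literally \eqref{ineq:prio2}, so the $1/s$ detour comes from misreading the assumption. Your aside that $\|u_\theta/r\|_{L^\infty}$ is conserved is false. Only $\|ru_\theta\|_{L^\infty}$ is conserved, while $u_\theta/r$ is amplified by the term $-2\frac{u_r}{r}\frac{u_\theta}{r}$. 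You should also make explicit that the $p=2$ bound closes by itself. Indeed \eqref{ineq:na_H_L2} gives $\int_0^t\|\p_z\cal{H}\|_{L^2}^2ds\le\|\cal{H}_0\|_{L^2}^2$, hence $\|\Omega(t)\|_{L^2}^2\lesssim(1+E_0)^2(1+t)$. For $p=6$ one only gets $\|\Omega\|_{L^6}^2\lesssim E_0(1+t)G(t)$, where $G(t):=1+\|\na\cal{H}(t)\|_{L^2}^2+\int_0^t\|\na^2\cal{H}\|_{L^2}^2ds$.

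\paragraph{The gap.} The gap is in the $\dot H^1$ estimate of $\cal{H}$, specifically the convection term $\int|\na\bm b||\na\cal{H}|^2$.

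You bound this term by $\|\omega_\theta\|_{L^3}\|\na\cal{H}\|_{L^3}^2$, so you need control of $\int_0^t\|\omega_\theta\|_{L^3}^2\|\na\cal{H}\|_{L^2}^2ds$. Nothing available at this stage controls $\omega_\theta$:
\begin{itemize}
\item The claim $\|\omega_\theta\|_{L^2}\le C_0^{1/2}$ is false. \eqref{ineq:uh} controls $\|\bm u\|_{L^2}$, not the vorticity, and $\omega_\theta$ is stretched by $\frac{u_r}{r}\omega_\theta$.
\item The bound ``$\|\omega_\theta\|_{L^\infty}\lesssim\|r\Omega\|$'' is just the identity $\omega_\theta=r\Omega$. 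It gives nothing, since $r$ is unbounded.
\end{itemize}
In the paper, $\|\omega_\theta\|_{L^p}$ is estimated only afterwards, in Proposition \ref{ineq:na_b_Lp}. That estimate uses the present proposition and has double-exponential growth, so invoking it here is circular. A coupled bootstrap would make the Gr\"onwall inequality for $G$ superlinear, through $\exp(\int_0^t\|u_r/r\|_{L^\infty}ds)$ and $\|u_r/r\|_{L^\infty}\lesssim\|\Omega\|_{L^2}^{1/2}\|\Omega\|_{L^6}^{1/2}$. That does not give a bound of the form \eqref{EP3211}.

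What the paper uses instead is \cite[(3.19)]{li2022single}:
\[
\f{d}{dt}\|\na\cal{H}\|_{L^2}^2+\|\na^2\cal{H}\|_{L^2}^2\lesssim\|\na\cal{H}\|_{L^2}^2\big(1+\|\cal{H}_0\|_{L^2}^2+\|\Omega\|_{L^2}^{4/3}\big)+\|\na\bm{h}\|_{L^2}^2\|\Omega\|_{L^6}^2 .
\]
Here the velocity enters only through $\Omega$. The non-self-closed factor $\|\Omega\|_{L^6}^2\lesssim E_0(1+t)G(t)$ is multiplied by the time-integrable weight $\|\na\bm h\|_{L^2}^2$ from \eqref{ineq:uh}. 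This makes the inequality for $G$ linear.

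The exponents then come from $\|\Omega\|_{L^2}^{4/3}\lesssim(1+E_0)^{4/3}(1+t)^{2/3}$, integrated in time to give $(1+t)^{5/3}$. They do not come from a Young pair $(3,3/2)$. Without an estimate of this type, your closing step does not go through. Such an estimate must control the convection term by $\Omega$ alone, with the $L^6$ factor paired against an integrable weight.
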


\begin{proof}
Firstly we show the $L^2$ and $L^6$ estimate of $(\Omega,J)(t,\cd)$. By \eqref{eq:w}$_{1,2}$, one can deduce the couple $\Omega = \frac{{\omega}_\theta}{r}, J = \frac{{\omega}_r}{r}$ satisfying: 
\begin{equation} \label{eq:omega}
\left\{\begin{aligned}
	& \p_t \Omega + \bm{b}\cdot \nabla\Omega = -\p_z \mathcal{H}^2	 - 2\frac{u_{\theta}}{r}J,\\
	& \p_t J + \bm{b}\cdot \nabla J = (\omega_r\p_r + \omega_z\p_z)\frac{u_r}{r}.
\end{aligned}\right.
\end{equation}
Performing the $L^p$ estimates for \eqref{eq:omega}$_1$ and using Lemma \ref{ineq:Lp_con}, one arrives
\begin{equation} \label{ineq:Omega}
\begin{aligned}
	\|\Omega(t,\cdot)\|_{L^p} &\lesssim \|\Omega_0\|_{L^p} + \int_0^t \|\p_z \mathcal{H}^2(s,\cdot)\|_{L^p} ds + \int_0^t \|\frac{u_{\theta}}{r} (s,\cdot)\|_{L^\infty} \|J(s,\cdot)\|_{L^p}ds\\
	& \lesssim \|\Omega_0\|_{L^p} + \|\mathcal{H}_0\|_{L^\infty} \int_0^t \|\p_z \mathcal{H}(s,\cdot)\|_{L^p}ds  +\int_0^t \|\nabla \times (u_{\theta}\bm{e_{\theta}})(s,\cdot)\|_{L^\infty}\|J(s,\cdot)\|_{L^p}ds.\\
\end{aligned}
\end{equation}
Similarly one can deduce from $\eqref{eq:omega}_2$ that
\begin{equation} \label{ineq:J}
\begin{aligned}
	\|J(t,\cdot)\|_{L^p} &\lesssim \|J_0\|_{L^p} + \int_0^t \|(\omega_r, \omega_z)(s,\cdot)\|_{L^\infty}\|\nabla \frac{u_r}{r} (s,\cdot)\|_{L^p} ds \\
	& \lesssim \|J_0\|_{L^p} + \int_0^t \|\nabla \times (u_{\theta}\bm{e_{\theta}})(s,\cdot)\|_{L^\infty}\|\Omega(s,\cdot)\|_{L^p} ds.
\end{aligned}
\end{equation}
Combining \eqref{ineq:Omega} and \eqref{ineq:J}, and using the Gr\"onwall inequality, one derives that
\begin{equation} \label{ineq:O_J}
\begin{aligned}
	\|(\Omega,J)(t,\cdot)\|_{L^p} \lesssim & \left( \|(\Omega_0,J_0)\|_{L^p} + \|\mathcal{H}_0\|_{L^\infty}\int_0^t\|\p_z\mathcal{H}(s,\cdot)\|_{L^p} ds \right) \\
&\times \exp\left( \int_0^t \|\nabla\times (u_{\theta}\bm{e_{\theta}})(s,\cdot)\|_{L^\infty} ds \right)\\
	\lesssim & \|(\Omega_0,J_0)\|_{L^p} + \|\mathcal{H}_0\|_{L^\infty}\int_0^t \|\p_z \mathcal{H}(s,\cdot)\|_{L^p} ds,
\end{aligned}
\end{equation}
for any $t\leq T_*$. 

For $p = 2$, we find \eqref{ineq:O_J} together with the fundamental estimate \eqref{ineq:Lp_con} indicates
\begin{equation}\label{ineq:J_L2}
\begin{aligned}
	\|(\Omega,J)(t,\cdot)\|_{L^2}^2 & \lesssim  \|(\Omega_0,J_0)\|_{L^2}^2 + t\|\mathcal{H}_0\|_{L^\infty}^2 \int_0^t \|\p_z \mathcal{H}(s,\cdot)\|_{L^2}^2 ds\\
	& \lesssim \|(\Omega_0,J_0)\|_{L^2}^2 + t\|\mathcal{H}_0\|_{L^2}^2 \|\mathcal{H}_0\|_{L^\infty}^2 \\ 
    & \lesssim (1+E_0)^2(1+t),
\end{aligned}
\end{equation}
holds for any $t\leq T_*$, which is already a self-closed a priori estimate. Here $E_0:= \|(\bm{u}_0,\bm{h}_0)\|_{H^3}^2$.

For $p=6$. Using Lemma \ref{ineq:G_N}, Lemma \ref{ineq:Lp_con} and the Cauchy-Schwarz inequality, \eqref{ineq:O_J} indicates 
\begin{equation}\label{ineq:O_J_L6}
   \begin{aligned}
        \|(\Omega,J)(t,\cd)\|_{L^6}^2\lesssim&\|(\Omega_0,J_0)\|_{L^6}^2 + \|\mathcal{H}_0\|_{L^\infty}^2t\int_0^t \|\p_z \mathcal{H}(s,\cdot)\|_{L^6}^2 ds\\
        \lesssim &E_0(1+t)\int_0^t\|\na^2\cal{H}(s,\cd)\|_{L^2}^2ds.
   \end{aligned}
\end{equation}

Next, we further deduce its explicit estimate with respect to $t$. Denoting 
$$G(t):=1+\|\na\cal{H}(t,\cdot)\|_{L^2}^2+\int_0^t\|\na^2\cal{H}(s,\cd)\|_{L^2}^2ds.$$
In view of \cite[(3.19)]{li2022single}, we see
\begin{equation}\label{ineq:Grn1}
    \f{d}{dt}\|\na\cal{H}(t,\cd)\|_{L^2}^2 + \|\na^2\cal{H}(t.\cd)\|_{L^2}^2\lesssim\|\na\cal{H}(t,\cd)\|_{L^2}^2 (1+\|\cal{H}_0\|_{L^2}^2 + \|\Omega(t,\cd)\|_{L^2}^{4/3}) + \|\na \bm{h}(t,\cd)\|_{L^2}^2\|\Omega(t,\cd)\|_{L^6}^2.
\end{equation}
Thus, inserting \eqref{ineq:J_L2} and \eqref{ineq:O_J_L6} into \eqref{ineq:Grn1}, one finds
\begin{equation*}
    G^\prime(t)\lesssim \left(1+(1+E_0)^{4/3}(1+t)^{2/3}+E_0(1+t)\|\na \bm{h}(t,\cd)\|_{L^2}^2\right)G(t).
\end{equation*}
Using the G\"ornwall inequality, and by Lemma \ref{ineq:Lp_con}, one can deduce
\begin{equation*}
    G(t)\leq G(0) \exp\big(C(1+E_0)^{4/3}(1+t)^{5/3}\big).
\end{equation*}
This indicates 
\[
\|\na\cal{H}(t,\cdot)\|_{L^2}^2+\int_0^t\|\na^2\cal{H}(s,\cd)\|_{L^2}^2ds\leq(1+E_0) \exp\big(C(1+E_0)^{4/3}(1+t)^{5/3}\big).
\]
Substituting this in \eqref{ineq:J_L2} and \eqref{ineq:O_J_L6}, one concludes \eqref{EP3211}.

\end{proof}

The remaining proposition aims to establish the estimate of $\|\bm{h}(t,\cd)\|_{L^2(0,t;L^\infty)}$. We first estimate $\|h_{\theta}(t,\cd)\|_{L^p}$, then derive the $L^2\cap L^6$ bound for $\na\bm{b}$, the $L_t^1L^\infty$ bound for $\p_z\cal{H}$, and the $L^\infty$ bound for $\omega_\theta$. This ultimately leads to the desired estimate.

\begin{proposition}\label{ineq:h_the}
    Define $\Omega:=\frac{\omega_\theta}{r}$ and $\cal{H}:=\frac{h_\theta}{r}$. Assume that $\nabla \cdot \bm{u}_0=h_{r,0}=h_{z,0} \equiv 0$. Let $(\bm{u}, \bm{h})$, satisfying \eqref{ineq:prio2}, be the unique local axially symmetric solution of \eqref{eq:Hall-MHD} with the initial data $\left(\bm{u}_0, \bm{h}_0\right) \in H^3\left(\mathbb{R}^3\right)$. The following $L^p(2\leq p\leq \infty)$ estimate of $h_\theta$ holds for any $0<t\leq T_*$:
    \begin{equation*}
    \|h_\theta(t,\cdot)\|_{L^p}\leq \exp \left(\exp\big(C(1+E_0)^{4/3}(1+t)^{5/3}\big)\right).
\end{equation*}
Here $C$ is a constant.
\end{proposition}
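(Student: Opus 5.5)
We work directly with the equation for $h_\theta$, namely $\eqref{eq:Ax_Hall-MHD}_4$,
\[
\p_t h_\theta+\bm{b}\cd\na h_\theta-\f{u_r}{r}h_\theta-\Big(\Delta-\f1{r^2}\Big)h_\theta=\f{\p_z h_\theta^2}{r}=2\,\cal{H}\,\p_z h_\theta ,
\]
and run a standard $L^p$ energy estimate. Multiply by $|h_\theta|^{p-2}h_\theta$ and integrate over $\bb{R}^3$. The transport term vanishes because $\na\cd\bm{b}=0$. The operator $-(\Delta-\f1{r^2})$ gives a nonnegative contribution $(p-1)\int|h_\theta|^{p-2}|\na h_\theta|^2+\int r^{-2}|h_\theta|^p\ge 0$, which we may drop.

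For the Hall term we write
\[
2\int \cal{H}\,\p_z h_\theta\,|h_\theta|^{p-2}h_\theta=\f2p\int \cal{H}\,\p_z|h_\theta|^p=-\f2p\int \p_z\cal{H}\,|h_\theta|^p .
\]
Altogether this yields
\[
\f{d}{dt}\|h_\theta\|_{L^p}\le \Big(\Big\|\f{u_r}{r}\Big\|_{L^\infty}+\f2p\|\p_z\cal{H}\|_{L^\infty}\Big)\|h_\theta\|_{L^p}.
\]
Gronwall's inequality then gives
\[
\|h_\theta(t)\|_{L^p}\le \|h_{\theta,0}\|_{L^p}\exp\Big(\int_0^t \Big\|\f{u_r}{r}\Big\|_{L^\infty}+\|\p_z\cal{H}\|_{L^\infty}\,ds\Big),
\]
uniformly in $p\in[2,\infty]$; the case $p=\infty$ follows by letting $p\to\infty$. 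Since $\|h_{\theta,0}\|_{L^p}\lesssim E_0^{1/2}$, it remains to bound the time integral by $\exp\big(C(1+E_0)^{4/3}(1+t)^{5/3}\big)$.

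\textbf{Bounding $u_r/r$.} The function $u_r/r$ is axisymmetric and Lemma \ref{ineq:nau_rr} controls $\|\na\f{u_r}{r}\|_{L^p}\lesssim\|\Omega\|_{L^p}$. Combining Sobolev/Morrey with $p=6$ and $p=2$ (Gagliardo--Nirenberg, Lemma \ref{ineq:G_N}) gives
\[
\Big\|\f{u_r}{r}\Big\|_{L^\infty}\lesssim \Big\|\na\f{u_r}{r}\Big\|_{L^2}^{1/2}\Big\|\na\f{u_r}{r}\Big\|_{L^6}^{1/2}\lesssim\|\Omega\|_{L^2\cap L^6}.
\]
By Proposition \ref{OH2} the right-hand side is bounded by $\exp\big(C(1+E_0)^{4/3}(1+t)^{5/3}\big)$, so its time integral obeys the same bound after enlarging $C$ (the factor $t$ is absorbed into the exponential).

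\textbf{Bounding $\p_z\cal{H}$.} This is the main obstacle, since Proposition \ref{OH2} only gives $\cal{H}\in L^2_t\dot H^2$, which does not embed into $L^\infty$ for the gradient. The plan is to use maximal regularity for $\eqref{eq:H}$. Write it as a heat equation in $\bb{R}^5$: $\cal{H}$ viewed as a radial-in-$5$D function satisfies
\[
\p_t\cal{H}-\Delta_{5}\cal{H}=-\bm{b}\cd\na\cal{H}+2\cal{H}\p_z\cal{H}=:F .
\]
Alternatively, one can stay in $\bb{R}^3$ and treat $\f2r\p_r\cal{H}$ as a lower-order term. We then bound $F$ in $L^2_tL^q$ for some $q>3$ (in the $5$D setting, $q>5$ is needed for Morrey). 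For $\cal{H}\p_z\cal{H}$ we use $\|\cal{H}\|_{L^\infty}\le\|\cal{H}_0\|_{L^\infty}$ from Lemma \ref{ineq:Lp_con} together with $\na\cal{H}\in L^2_tL^6$ from Proposition \ref{OH2}. For $\bm{b}\cd\na\cal{H}$ we use $\|\bm{b}\|_{L^\infty}\lesssim\|\bm{u}\|_{L^2}^{1/4}\|\na\bm{b}\|_{L^6}^{3/4}$, and $\|\na\bm{b}\|_{L^6}\lesssim\|\omega_\theta\|_{L^6}$ by Lemma \ref{ineq:nau_b}. The quantity $\|\omega_\theta\|_{L^6}$ is in turn estimated from $\eqref{eq:w}_2$ via Lemma \ref{ineq:uLp}, with the coefficient $u_r/r$ already bounded above and the forcing $\f1r\p_z(u_\theta^2-h_\theta^2)$ bounded using \eqref{ineq:prio2}, $\cal{H}$ and $\na h_\theta$.

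Lemma \ref{ineq:heat} then places $\na^2\cal{H}$ in $L^2_tL^q$, and Morrey/Gagliardo--Nirenberg gives $\|\p_z\cal{H}\|_{L^\infty}\lesssim\|\na\cal{H}\|_{L^6}^{1-a}\|\na^2\cal{H}\|_{L^q}^{a}$. Hence $\int_0^t\|\p_z\cal{H}\|_{L^\infty}$ is bounded by a polynomial in these quantities. If closing this step is too costly, a fallback is to absorb the Hall contribution differently: integrate by parts only with the $\f2p$ factor, and bound $\|\p_z\cal{H}\|_{L^\infty}$ through its $L^2_t$ norm using the Cauchy--Schwarz inequality in time. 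Either way, the resulting bound has the form $\exp\big(C(1+E_0)^{4/3}(1+t)^{5/3}\big)$ with a larger $C$, and inserting it into the Gronwall bound gives the claimed double exponential.
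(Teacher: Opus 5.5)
Your $L^p$ energy estimate and your treatment of $u_r/r$ coincide with the paper's, but the proof does not close as written. Your Gronwall factor contains $\int_0^t\|\partial_z\mathcal{H}\|_{L^\infty}\,ds$, and you never establish the bound $\exp\big(C(1+E_0)^{4/3}(1+t)^{5/3}\big)$ for it. Proposition \ref{OH2} gives only $\nabla\mathcal{H}\in L^2_t\dot H^1\subset L^2_tL^6$, not $L^2_tL^\infty$, so the Cauchy--Schwarz fallback has nothing to act on.

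The maximal-regularity route needs $\bm{b}\in L^\infty$, hence $\omega_\theta\in L^6$. The forcing in \eqref{eq:w}$_2$ is $\frac1r\partial_z h_\theta^2=2h_\theta\,\partial_z\mathcal{H}=2\mathcal{H}\,\partial_z h_\theta$, and neither form is usable here. The first requires $\|h_\theta\|_{L^\infty}$, which is the statement you are proving; this is exactly how the paper's Proposition \ref{ineq:na_b_Lp} uses the present result. The second requires $\nabla h_\theta\in L^1_tL^6$, i.e.\ second derivatives of $\bm{h}$, which the paper obtains only later from estimates that depend on this proposition. Moreover, the paper's eventual bound on $\|\partial_z\mathcal{H}\|_{L^1_tL^\infty}$ (Proposition \ref{ineq:p_zH}) is double-exponential. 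Inserting it into your Gronwall factor would give a triple exponential, not the claimed bound.

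The missing idea is that the Hall term contributes nothing at all. Since $1/r$ is independent of $z$,
\[
\frac1r\,\partial_z(h_\theta^2)\,h_\theta|h_\theta|^{p-2}=\frac{2}{r}\,|h_\theta|^p\,\partial_z h_\theta=\frac{2}{p+1}\,\partial_z\Big(\frac1r\,h_\theta|h_\theta|^p\Big),
\]
which integrates to zero against $2\pi r\,dr\,dz$. Your own computation already shows this. Set $X:=2\int_{\mathbb{R}^3}\frac1r|h_\theta|^p\partial_z h_\theta\,dx$ and use $\partial_z\mathcal{H}=\frac1r\partial_z h_\theta$; your integration-by-parts identity then reads $X=-\frac1p X$, so $X=0$. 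Dropping the $\partial_z\mathcal{H}$ term, Gronwall gives $\|h_\theta(t)\|_{L^p}\le\|h_{\theta,0}\|_{L^p}\exp\big(\int_0^t\|\frac{u_r}{r}\|_{L^\infty}ds\big)$. Your bound $\|\frac{u_r}{r}\|_{L^\infty}\lesssim\|\Omega\|_{L^2}^{1/2}\|\Omega\|_{L^6}^{1/2}$ together with Proposition \ref{OH2} then yields the stated double exponential, exactly as in the paper.
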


\begin{proof}
    For any $p \geq 1$, multiplying $h_\theta|h_\theta|^{p-2}$ on \eqref{eq:Ax_Hall-MHD}$_4$, one derives
\begin{equation*}
\begin{aligned}
    \frac{1}{p}\frac{d}{dt}\|h_\theta(t,\cdot)\|_{L^p}^p &\leq \|\frac{u_r}{r}(t,\cdot)\|_{L^{\infty}} \|h_\theta(t,\cdot)\|_{L^p}^p - \int_{\mathbb{R}^3}\frac{|h_\theta|^{p}}{r^2} dx - (p-1)\int_{\mathbb{R}^3}|\nabla h_\theta|^2|h_\theta|^{p-2} dx\\
    &\qquad + \int_{\mathbb{R}^3}\frac{1}{r}\p_z(h_\theta)^2 h_\theta |h_\theta|^{p-2} dx\\
    &\leq \left\|\frac{u_r}{r}(t,\cdot)\right\|_{L^{\infty}} \|h_\theta(t,\cdot)\|_{L^p}^p.
\end{aligned}
\end{equation*}
Here, we have applied the identity
\begin{equation*}
    \int_{\mathbb{R}^3}\frac{1}{r}\p_z(h_\theta)^2 h_\theta |h_\theta|^{p-2} dx = \frac{2}{p+1}\int_{\mathbb{R}^3}\p_z\left(\frac{1}{r}h_\theta |h_\theta|^p\right)dx = 0.
\end{equation*}
Canceling $\|h_\theta(t,\cdot)\|_{L^p}^{p-1}$ on each sides and by the Gr\"onwall inequality, one finds
\ba\l{EHTH}
    \|h_\theta(t,\cdot)\|_{L^p} \leq \|h_{\theta,0}\|_{L^p}  \exp\left(\int_{0}^{t} \|\frac{u_r}{r} (s,\cdot)\|_{L^\infty} ds\right).
\ea
Noting that $\frac{u_r}{r}$ satisfied by Lemma \ref{ineq:G_N} and Lemma \ref{ineq:nau_b}
\begin{equation*}
\begin{aligned}
    \|\frac{u_r}{r}(t,\cdot)\|_{L^\infty} &\lesssim \|\frac{u_r}{r}(t,\cdot)\|_{L^6}^{\frac{1}{2}} \|\nabla \frac{u_r}{r}(t,\cdot)\|_{L^6}^{\frac{1}{2}} \lesssim \|\nabla \frac{u_r}{r}(t,\cdot)\|_{L^2}^{\frac{1}{2}}\|\nabla \frac{u_r}{r}(t,\cdot)\|_{L^6}^{\frac{1}{2}}\\
	& \lesssim \|\Omega(t,\cdot)\|_{L^2}^{\frac{1}{2}} \|\Omega(t,\cdot)\|_{L^6}^{\frac{1}{2}} ,
\end{aligned}
\end{equation*}
for any $t \leq T_*$. Integrating with time over $(0,t)$ and using \eqref{EP3211}, one has
\ba\l{EURR}
\int_0^t\|\frac{u_r}{r}(s,\cdot)\|_{L^\infty}ds\leq C(1+E_0)(1+t)^{3/2}\exp\big(C(1+E_0)^{4/3}(1+t)^{5/3}\big).
\ea
Substituting \eqref{EURR} in \eqref{EHTH}, one concludes
\begin{equation}\label{ESSS}
\begin{aligned}
    \|h_\theta(t,\cdot)\|_{L^p}\leq \exp \left(\exp\big(C(1+E_0)^{4/3}(1+t)^{5/3}\big)\right),
\end{aligned}
\end{equation}
uniformly for $p \in [2,\infty)$, $0\leq t\leq T_*$ and the $L^{\infty}$ estimate of $h_\theta$ is achieved by choosing $p \rightarrow \infty$ in \eqref{ineq:h_the} since the far right above is independent of $p$.
\end{proof}

\begin{proposition}\label{ineq:na_b_Lp}
    Assume that $\nabla \cdot \bm{u}_0 = {h}_{r,0} = h_{z,0} \equiv 0$. Let $(\bm{u},\bm{h})$, satisfying \eqref{ineq:prio2}, be the unique local axially symmetric solution of \eqref{eq:Hall-MHD} with the initial date $(u_0,h_0) \in H^3(\mathbb{R}^3) $. The following $L^p$ estimate of $\nabla \bm{b}$ holds for any $t\geq 0$:
\begin{equation*}
    \|\nabla\bm{b}(t,\cd)\|_{L^p} \leq  \exp \left(\exp\big(C(1+E_0)^{4/3}(1+t)^{5/3}\big)\right).
\end{equation*}
Here $C$ is a constant and $2\leq p\leq 6$.
\end{proposition}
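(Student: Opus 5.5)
By Lemma \ref{ineq:nau_b}, $\|\nabla\bm{b}\|_{L^p}\le C_p\|\omega_\theta\|_{L^p}$ for $1<p<\infty$. The plan is therefore to reduce the statement to an $L^p$ bound on $\omega_\theta$ for $2\le p\le 6$, and to obtain that bound from the transport equation \eqref{eq:w}$_2$ via Lemma \ref{ineq:uLp}. I will treat $\tilde{\bm b}=\bm b$ as the transport field and $\bm A=u_r/r$ as the bounded coefficient, and put the remaining two terms into the forcing:
\[
\bm F=\frac{1}{r}\p_z u_\theta^2-\frac1r\p_z h_\theta^2 = -2\frac{u_\theta}{r}\,\omega_r - 2 h_\theta\,\p_z\mathcal H .
\]
Lemma \ref{ineq:uLp} then gives
\[
\|\omega_\theta(t)\|_{L^p}\le\Big(\|\omega_{\theta,0}\|_{L^p}+\int_0^t\|\bm F(s)\|_{L^p}ds\Big)\exp\Big(\int_0^t\|\tfrac{u_r}{r}(s)\|_{L^\infty}ds\Big),
\]
and the exponential factor is already controlled by \eqref{EURR}. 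That bound is of the form $\exp(\exp(C(1+E_0)^{4/3}(1+t)^{5/3}))$, since the prefactor $(1+E_0)(1+t)^{3/2}$ is absorbed into the inner exponential after enlarging $C$.

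It remains to bound $\int_0^t\|\bm F\|_{L^p}ds$, term by term.

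For the velocity term, $\|u_\theta/r\|_{L^\infty}\lesssim\|\nabla\times(u_\theta\bm e_\theta)\|_{L^\infty}$. This is the same bound already used in \eqref{ineq:Omega}: $u_\theta/r$ is controlled by $\omega_z$ through an averaging identity near the axis. We also have $\|\omega_r\|_{L^p}\le\|\nabla\bm u\|_{L^p}\lesssim\|\bm u\|_{H^3}$, but it is better to avoid that. Instead write $\omega_r=rJ$, so that
\[
\frac{u_\theta}{r}\,\omega_r=u_\theta J .
\]
Then $\|u_\theta J\|_{L^p}\le\|u_\theta\|_{L^\infty}\|J\|_{L^p}$. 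Here $\|u_\theta\|_{L^\infty}\le\|u_{\theta,0}\|_{L^\infty}$, because $ru_\theta$ is transported and $u_\theta$ obeys a maximum-type bound. Alternatively, $\|u_\theta J\|_{L^p}\lesssim\|\omega_r\|_{L^\infty}\|\mathcal{U}\|_{L^p}$-type splittings also work. For $2\le p\le 6$, the norm $\|J\|_{L^p}$ is bounded by interpolating between \eqref{ineq:J_L2} and \eqref{ineq:O_J_L6}/\eqref{EP3211}. The outcome is a single exponential, which is acceptable.

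For the magnetic term, $\|h_\theta\p_z\mathcal H\|_{L^p}\le\|h_\theta\|_{L^\infty}\|\nabla\mathcal H\|_{L^p}$. The first factor is bounded by Proposition \ref{ineq:h_the} with $p=\infty$, which gives the double exponential. For the second factor, Gagliardo--Nirenberg (Lemma \ref{ineq:G_N}) gives
\[
\|\nabla\mathcal H\|_{L^p}\lesssim\|\nabla\mathcal H\|_{L^2}^{1-\alpha}\|\nabla^2\mathcal H\|_{L^2}^{\alpha}\quad\text{for } 2\le p\le 6 .
\]
Its time integral is then controlled by Cauchy--Schwarz in time and \eqref{EP3211}, costing at most a factor $(1+t)$ times a single exponential.

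Collecting these pieces, every factor is at most $\exp(\exp(C(1+E_0)^{4/3}(1+t)^{5/3}))$. Products of such quantities, together with polynomial factors in $t$ and $E_0$, are absorbed by enlarging $C$ in the inner exponent. The step I expect to be delicate is the forcing term $u_\theta\omega_r/r$, specifically keeping the estimate closed without appealing to $\|\bm u\|_{H^3}$. My first attempt is the rewriting $\omega_r=rJ$ combined with $\|u_\theta\|_{L^\infty}\le\|u_{0,\theta}\|_{L^\infty}\lesssim\sqrt{E_0}$; if that maximum-principle step fails, I would fall back on $\|u_\theta/r\|_{L^\infty}\lesssim\vep$-type bounds from \eqref{ineq:prio2} together with $\|\omega_r\|_{L^p}\le\|\omega_r\|_{L^2}^{2/p}\|\omega_r\|_{L^\infty}^{1-2/p}$. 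As in Proposition \ref{ineq:h_the}, the estimate is valid on $[0,T_*]$, where \eqref{ineq:prio2} holds.
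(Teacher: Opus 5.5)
Your overall plan matches the paper: reduce to $\omega_\theta$ via Lemma \ref{ineq:nau_b}, apply Lemma \ref{ineq:uLp} with $\bm A=u_r/r$ and the Gr\"onwall factor from \eqref{EURR}, and bound the magnetic forcing $2h_\theta\partial_z\mathcal H$ by $\|h_\theta\|_{L^\infty}$ from Proposition \ref{ineq:h_the} together with Gagliardo--Nirenberg and \eqref{EP3211}. The gap is in the swirl forcing.

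\textbf{The false step.} Your main route uses $\|u_\theta(t)\|_{L^\infty}\le\|u_{\theta,0}\|_{L^\infty}$, and this is false. The swirl satisfies $\partial_t u_\theta+\bm b\cdot\nabla u_\theta=-\frac{u_r}{r}u_\theta$, whose zeroth-order term has no sign. Since $\Gamma=ru_\theta$ is transported, $u_\theta(t,X(t))=\Gamma_0(X_0)/R(t)$ grows whenever a particle moves toward the axis. Only $\|ru_\theta\|_{L^\infty}$ is conserved.

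\textbf{The fallback.} It does not close as written either. $\|\omega_r\|_{L^2}$ is not among the bounds established so far. Its natural estimate goes through $\|\nabla\bm b\|_{L^2}\lesssim\|\omega_\theta\|_{L^2}$, which is the quantity you are trying to bound, so you would need a coupled Gr\"onwall argument that you have not set up.

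\textbf{The repair.} Apply Lemma \ref{ineq:uLp} with $p=\infty$, $\bm A=-u_r/r$ and $\bm F=0$. This gives $\|u_\theta(t)\|_{L^\infty}\le\|u_{\theta,0}\|_{L^\infty}\exp\big(\int_0^t\|\tfrac{u_r}{r}(s)\|_{L^\infty}\,ds\big)$. Here $\|u_{\theta,0}\|_{L^\infty}\lesssim E_0^{1/2}$ by Sobolev embedding, and by \eqref{EURR} the right-hand side is of the target double-exponential size rather than the single exponential you claimed. That is still acceptable. Combined with the single-exponential $L^2\cap L^6$ bound on $J$ from \eqref{ineq:J_L2}, \eqref{ineq:O_J_L6} and \eqref{EP3211}, the term $\int_0^t\|u_\theta J\|_{L^p}\,ds$ is controlled and your argument goes through.

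\textbf{Comparison with the paper.} After this fix your route genuinely differs from the paper's. The paper uses the other splitting, $\|\frac{u_\theta}{r}\|_{L^p}\|\omega_r\|_{L^\infty}$, which you mention only in passing and do not complete. It controls $\|\frac{u_\theta}{r}\|_{L^p}$ through that quantity's own transport equation $\partial_t\frac{u_\theta}{r}+\bm b\cdot\nabla\frac{u_\theta}{r}=-2\frac{u_r}{r}\frac{u_\theta}{r}$, which is estimate \eqref{EUTHR}. It then bounds $\int_0^t\|\omega_r\|_{L^\infty}\,ds\le 1$ directly from the bootstrap assumption \eqref{ineq:prio2}. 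Your corrected version instead reuses the $J$ bounds already proved in Proposition \ref{OH2}, and needs only $u_{\theta,0}\in L^\infty$ rather than $u_{\theta,0}/r\in L^p$. The paper's version uses the smallness of $\omega_r$ directly at this step. Both cost one extra transport estimate and give the same double-exponential bound.
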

\begin{proof}
    To obtain the $L^p$ estimate of $\nabla \bm{b}$ , we begin by deriving an $L^p$ estimate of $\frac{u_\theta}{r}$ for $2 \leq p \leq \infty$. Due to \eqref{eq:Ax_Hall-MHD}$_2$, one can deduce that $\frac{u_\theta}{r}$ satisfies
\begin{equation*}
    \p_t \frac{u_\theta}{r} +(\bm{b}\cdot\nabla) \frac{u_\theta}{r} + 2\frac{u_r}{r}\cdot\frac{u_\theta}{r} = 0,
\end{equation*}
for any $p\geq1$, multiplying $\frac{u_\theta}{r}|\frac{u_\theta}{r}|^{p-2}$ on both sides and integrating on $\mathbb{R}^3$, one arrives
\begin{equation*}
    \frac{1}{p}\frac{d}{dt}\|\frac{u_\theta}{r}\|_{L^p}^p + {\int_{\mathbb{R}^3} \left(\bm{b}\cdot\nabla\right) \frac{u_\theta}{r} \left|\frac{u_\theta}{r}\right|^{p-2}\frac{u_\theta}{r}dx} + 2\int_{\mathbb{R}^3} \frac{u_r}{r}\left|\frac{u_\theta}{r}\right|^p dx=0.
\end{equation*}
Noticing that
\begin{equation*}
\begin{aligned}
        & {\int_{\mathbb{R}^3} \left(\bm{b}\cdot\nabla\right) \frac{u_\theta}{r} \left|\frac{u_\theta}{r}\right|^{p-2}\frac{u_\theta}{r}dx} = \frac{1}{p}\int_{\mathbb{R}^3} \bm{b}\cdot\nabla\left(\left|\frac{u_\theta}{r}\right|^p\right)dx = 0,\\
\end{aligned}
\end{equation*}
and
\begin{equation*}
\begin{aligned}
        &  \Big|\int_{\mathbb{R}^3} \frac{u_r}{r}\left|\frac{u_\theta}{r}\right|^p dx\Big| \leq \left\|\frac{u_r}{r}\right\|_{L^{\infty}}\left\|\frac{u_\theta}{r}\right\|_{L^p}^p,
\end{aligned}
\end{equation*}
one derives
\[
\frac{1}{p}\frac{d}{dt}\|\frac{u_\theta}{r}\|_{L^p}^p\leq2\left\|\frac{u_r}{r}\right\|_{L^{\infty}}\left\|\frac{u_\theta}{r}\right\|_{L^p}^p.
\]
Canceling $\|\frac{u_\theta}{r}\|_{L^p}^{p-1}$ on each side and using the Gr\"onwall inequality, one finds
\ba\l{EUTHR}
	\|\frac{u_\theta}{r}(t,\cdot)\|_{L^p} \leq& \|\frac{u_{\theta,0}}{r}\|_{L^p} \exp\left(2 \int_{0}^{t} \|\frac{u_r}{r}(s,\cdot)\|_{L^\infty} ds\right) \\
\leq &E_0^{1/2}\exp \left((1+E_0)(1+t)^{3/2}\exp\big(C(1+E_0)^{4/3}(1+t)^{5/3}\big)\right)\\
\leq & \exp \left(\exp\big(C(1+E_0)^{4/3}(1+t)^{5/3}\big)\right),
\ea
for any $p \in [2,\infty]$, $t\leq T_*$.

Next, for any $2\leq p\leq 6$, multiplying $|\omega_\theta|^{p-2}\omega_\theta$ on both sides of the following equation and integrating over $\mathbb R^3$,
\begin{equation*}
    \p_t\omega_\theta + (u_r\p_r + u_z\p_z)\omega_\theta = \frac{u_r}{r}\omega_\theta - \frac{2}{r}u_\theta\omega_r - \p_z\mathcal{H}h_\theta.
\end{equation*}
One derives
\begin{equation}
\begin{aligned}\label{eq:wp-basic}
\frac{1}{p}\frac{d}{dt}\|\omega_\theta(t,\cd)\|_{L^p}^p & +\underbrace{\int_{\mathbb R^3}(u_r\p_r+u_z\p_z)\omega_\theta|\omega_\theta|^{p-2}\omega_\theta dx}_{K_1} \\
& = \underbrace{\int_{\mathbb R^3}\frac{u_r}{r}|\omega_\theta|^p dx}_{K_2} -2\underbrace{\int_{\mathbb R^3}\frac{u_\theta}{r}\omega_r |\omega_\theta|^{p-2}\omega_\theta dx}_{K_3} -\underbrace{\int_{\mathbb R^3}\p_z\mathcal{H} h_\theta |\omega_\theta|^{p-2}\omega_\theta dx}_{K_4}.
\end{aligned}  
\end{equation}
Then, using the H\"older inequality and the divergence-free of $\bm{u}$, one deduce that
\begin{equation*}
\begin{aligned}
    & |K_1|=\frac{1}{p}\int_{\mathbb R^3}(u_r\p_r+u_z\p_z)(|\omega_\theta|^p) dx = \frac{1}{p}\int_{\mathbb R^3} \bm{u} \cdot\nabla(|\omega_\theta|^p) dx=0,\\
    & |K_2| \leq \|\frac{u_r}{r}\|_{L^\infty}\|\omega_\theta\|_{L^p}^p,\\
    & |K_3| \leq \|\frac{u_\theta}{r} \|_{L^p}\|\omega_r\|_{L^\infty} \|\omega_\theta\|_{L^p}^{p-1},\\
    & |K_4| \leq \|\p_z\mathcal{H}\|_{L^p}\|h_\theta\|_{L^\infty}\|\omega_\theta\|_{L^p}^{p-1}.
\end{aligned}
\end{equation*}
Canceling both sides by $\|\omega_\theta\|_{L^p}^{p-1}$ and integrating over $(0,t)$, one arrives
\begin{equation*}\label{w_L_p}
\|\omega_\theta(t,\cdot)\|_{L^p} \lesssim L_1+ \int_0^t \|\omega_\theta(s,\cdot)\|_{L^p} \|\frac{u_r}{r}(s,\cdot)\|_{L^\infty} ds,
\end{equation*}
here 
\begin{equation*}
    L_1:=\|\omega_{\theta,0}\|_{L^p} + \|\omega_{r}\|_{L^1_t L^{\infty}} \|\frac{u_\theta}{r} \|_{L^\infty_t L^p} + \|\p_z \mathcal{H}\|_{L^1_t L^p} \|h_\theta\|_{L^\infty_t L^\infty}.
\end{equation*}
Based on the aforementioned estimates of $\frac{u_r}{r}$ in \eqref{EURR}, $h_\theta$ in \eqref{ESSS} and $\frac{u_\theta}{r}$ in \eqref{EUTHR}, the estimate of $\nabla \mathcal{H}$ in Proposition \ref{OH2}, together with the a priori assumption \eqref{ineq:prio2}, we have
\begin{equation*}\label{SSSS1}
    L_1 \leq  CE_0^{1/2}\exp \left((1+E_0)(1+t)^{3/2}\exp\big(C(1+E_0)^{4/3}(1+t)^{5/3}\big)\right).
\end{equation*}
Inserting the result into right side of \eqref{eq:wp-basic}, and then by Gr\"onwall inequality, one can deduce
\begin{equation*}
\begin{aligned}
    \|\omega_\theta(t,\cdot)\|_{L^p} &\leq CE_0^{1/2}\exp \left(C(1+E_0)(1+t)^{3/2}\exp\big(C(1+E_0)^{4/3}(1+t)^{5/3}\big)\right) \\
    &\qquad \times \exp(\int_0^t\|\frac{u_r}{r}(s,\cdot)\|_{L^\infty}ds) \\
    & \leq CE_0^{1/2}\exp \left(C(1+E_0)(1+t)^{3/2}\exp\big(C(1+E_0)^{4/3}(1+t)^{5/3}\big)\right), \quad \forall p \in[2,6].
\end{aligned}
\end{equation*}
for any $t \leq T_*$. Furthermore, by Lemma \ref{ineq:nau_b}, we get the $L^p$ estimate of $\nabla \bm{b}$:
\begin{equation*}
    \|\nabla\bm{b}(t,\cd)\|_{L^p} \leq \exp \left(\exp\big(C(1+E_0)^{4/3}(1+t)^{5/3}\big)\right), \quad 2\leq p\leq6, \quad \forall t\leq T_*.
\end{equation*}
This finishes the proof of the proposition.
\end{proof}

\begin{proposition} \label{ineq:na_p_H}
    Assume that $\nabla \cdot \bm{u}_0={h}_{r,0}=h_{z,0} \equiv 0$. Let $(\bm{u}, \bm{h})$, satisfying \eqref{eq:Ax_Hall-MHD}, be the unique local axially symmetric solution of \eqref{eq:Hall-MHD} with the initial data $\left(\bm{u}_0, \bm{h}_0\right) \in H^3\left(\mathbb{R}^3\right)$, the following estimate holds for any $t\geq 0$:
\ba\l{EZZH}
        \|\na\p_z\cal{H}(t,\cd)\|_{L^2}^2+\int_0^t \|\na^2\p_z\cal{H}(s,\cd)\|_{L^2}^2ds\leq \exp\left(\exp\big(C(1+E_0)^{4/3}(1+t)^{5/3}\big)\right).
\ea
\end{proposition}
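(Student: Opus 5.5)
We plan to obtain \eqref{EZZH} from a direct $\dot H^1$ energy estimate for $\p_z\cal{H}$. The useful structural fact is that the operator $\Delta+\f2r\p_r$ commutes with $\p_z$, since its coefficients do not depend on $z$. Differentiating \eqref{eq:H} in $z$ and writing $Q:=\p_z\cal{H}$, we obtain
\begin{equation*}
\p_t Q+\bm{b}\cd\na Q-\Big(\Delta+\f2r\p_r\Big)Q=2\p_z(\cal{H}Q)-\p_z\bm{b}\cd\na\cal{H}.
\end{equation*}
We multiply this equation by $-\Delta Q$ (equivalently, test the gradient of the equation against $\na Q$) and integrate over $\mathbb{R}^3$. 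The dissipative term yields $\|\na^2 Q\|_{L^2}^2$. The extra term $\f2r\p_r Q$ is treated as in Lemma \ref{ineq:Lp_con} and as in \cite[(3.19)]{li2022single}. In the axisymmetric setting, $\Delta+\f2r\p_r$ is the five-dimensional Laplacian acting on radial-in-$(x_1,x_2)$ functions, so the associated cross terms either have a favorable sign or are controlled by $\|\na^2 Q\|_{L^2}\|\na Q\|_{L^2}$-type quantities after integration by parts in $r$. We will use this identification to avoid boundary terms at $r=0$.

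The forcing terms are handled as follows.
\begin{itemize}
\item \textbf{Transport term.} After integration by parts it becomes $\int \na\bm{b}:\na Q\,\na Q$. We bound it by $\|\na\bm{b}\|_{L^3}\|\na Q\|_{L^3}\|\na Q\|_{L^2}$, then by $\|\na\bm{b}\|_{L^3}\|\na Q\|_{L^2}^{3/2}\|\na^2Q\|_{L^2}^{1/2}$ using Lemma \ref{ineq:G_N}. Young's inequality then leaves $\|\na\bm{b}\|_{L^3}^{4/3}\|\na Q\|_{L^2}^2$, which is controlled through Proposition \ref{ineq:na_b_Lp}.
\item \textbf{Hall term.} The term $2\p_z(\cal{H}Q)$ tested against $\Delta Q$ gives $\int (\cal{H}\p_z Q+Q^2)\Delta Q$. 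We bound it by
\begin{equation*}
\|\cal{H}_0\|_{L^\infty}\|\na Q\|_{L^2}\|\na^2Q\|_{L^2}+\|Q\|_{L^4}^2\|\na^2 Q\|_{L^2},
\end{equation*}
using \eqref{ineq:H_Lp}. For the second piece we use $\|Q\|_{L^4}^2\lesssim \|Q\|_{L^2}^{1/2}\|\na Q\|_{L^2}^{3/2}$, and Proposition \ref{OH2} controls $\|Q\|_{L^2}\le\|\na\cal{H}\|_{L^2}$.
\item \textbf{Term $\p_z\bm{b}\cd\na\cal{H}$.} Tested against $\Delta Q$, it is bounded by $\|\na\bm{b}\|_{L^6}\|\na\cal{H}\|_{L^3}\|\na^2Q\|_{L^2}$. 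Here $\|\na\cal{H}\|_{L^3}\lesssim\|\na\cal{H}\|_{L^2}^{1/2}\|\na^2\cal{H}\|_{L^2}^{1/2}$, so the square of this factor is time-integrable by Proposition \ref{OH2}.
\end{itemize}

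After absorbing the $\|\na^2Q\|_{L^2}^2$ pieces, we arrive at an inequality of the form
\begin{equation*}
\f{d}{dt}\|\na Q\|_{L^2}^2+\|\na^2Q\|_{L^2}^2\lesssim a(t)\|\na Q\|_{L^2}^2+f(t).
\end{equation*}
The coefficients are
\begin{equation*}
a=1+\|\cal{H}_0\|_{L^\infty}^2+\|\na\bm{b}\|_{L^3}^{4/3}+\|\na\cal{H}\|_{L^2}^2, \qquad f=\|\na\bm{b}\|_{L^6}^2\|\na\cal{H}\|_{L^2}\|\na^2\cal{H}\|_{L^2}.
\end{equation*}
The $Q^2$ contribution can alternatively be estimated as $\|Q\|_{L^2}\|\na Q\|_{L^2}^2$ after Young's inequality. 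Propositions \ref{OH2} and \ref{ineq:na_b_Lp} give $\int_0^t(a+f)\,ds\le \exp\left(\exp\big(C(1+E_0)^{4/3}(1+t)^{5/3}\big)\right)$, with the factor $t$ absorbed into the constant $C$. Gr\"onwall's inequality then yields \eqref{EZZH}, with initial data bounded via $\|\na\p_z\cal{H}_0\|_{L^2}\lesssim\|\bm{h}_0\|_{H^3}$, since $h_\theta/r$ costs one derivative.

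The main obstacle is the singular term $\f2r\p_r$ and its commutator with $\na$. Applying $\na$ does not commute with $\f1r\p_r$, so a naive computation produces $\f{1}{r^2}\p_rQ$-type terms. We will first try the five-dimensional reformulation: $Q$ is smooth as a radial function on $\mathbb{R}^5$, so the estimate becomes a standard energy estimate in $\mathbb{R}^5$. The Sobolev and Gagliardo--Nirenberg exponents then change, and the $\mathbb{R}^3$ and $\mathbb{R}^5$ weighted norms have to be reconciled. If that reconciliation fails, we will instead test with $-(\Delta+\f2r\p_r)Q$, which makes the dissipation the natural operator norm, and then recover $\|\na^2Q\|_{L^2}$ from elliptic regularity for this operator.
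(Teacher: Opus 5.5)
Testing the $z$-derivative of \eqref{eq:H} against $-\Delta Q$ is the right kind of $\dot H^1$ estimate for $\p_z\cal{H}$; the paper imports exactly such an estimate from Li--Pan as \eqref{EZH}. However, your way of closing it does not give \eqref{EZZH}. You put $\|\na\bm{b}\|_{L^3}^{4/3}$ into the Gr\"onwall coefficient $a(t)$. Proposition \ref{ineq:na_b_Lp} bounds $\|\na\bm{b}\|_{L^3}$ only by a double exponential. Hence $\exp\big(C\int_0^t a\,ds\big)$ can only be bounded by a triple exponential $\exp\big(\exp(\exp(C(1+E_0)^{4/3}(1+t)^{5/3}))\big)$. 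That cannot be absorbed into the right-hand side of \eqref{EZZH}, and downstream it would cost an additional logarithm in the lifespan. So the step ``Gr\"onwall's inequality then yields \eqref{EZZH}'' is false as written. There are two smaller slips as well. Your H\"older bound $\|\na\bm{b}\|_{L^3}\|\na Q\|_{L^3}\|\na Q\|_{L^2}$ has exponents summing to $7/6$. And bounding the $Q^2$ term via $\|Q\|_{L^4}^2\lesssim\|Q\|_{L^2}^{1/2}\|\na Q\|_{L^2}^{3/2}$ leaves $\|Q\|_{L^2}\|\na Q\|_{L^2}^3$, which is not of the linear form you state.

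The missing observation is that no Gr\"onwall step is needed, because $\|\na Q\|_{L^2}=\|\na\p_z\cal{H}\|_{L^2}\le\|\na^2\cal{H}\|_{L^2}$, and Proposition \ref{OH2} already bounds $\int_0^t\|\na^2\cal{H}\|_{L^2}^2ds$ by a single exponential. With this, every term closes additively:
\begin{itemize}
\item Transport: $|\int\na\bm{b}:\na Q\otimes\na Q\,dx|\le\|\na\bm{b}\|_{L^3}\|\na Q\|_{L^2}\|\na Q\|_{L^6}\le\f14\|\na^2Q\|_{L^2}^2+C\|\na\bm{b}\|_{L^2}\|\na\bm{b}\|_{L^6}\|\na^2\cal{H}\|_{L^2}^2$.
\item Hall, linear part: $\|\cal{H}\|_{L^\infty}\|\na Q\|_{L^2}\|\na^2Q\|_{L^2}\le\f14\|\na^2Q\|_{L^2}^2+C\|\cal{H}_0\|_{L^\infty}^2\|\na^2\cal{H}\|_{L^2}^2$.
\item Hall, quadratic part: $-\int Q^2\Delta Q\,dx=2\int Q|\na Q|^2dx\le\f14\|\na^2Q\|_{L^2}^2+C\|\na\cal{H}\|_{L^2}^4\|\na^2\cal{H}\|_{L^2}^2$.
\item Your forcing $f$ for $\p_z\bm{b}\cd\na\cal{H}$ is already fine.
\end{itemize}
Integrating in time gives an additive bound of the type \eqref{EZH}. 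Its worst term, $\sup_s\|\na\bm{b}\|_{L^2}\|\na\bm{b}\|_{L^6}\int_0^t\|\na^2\cal{H}\|_{L^2}^2ds$, is a double exponential times a single exponential, which is \eqref{EZZH}.

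Finally, the $\f{2}{r}\p_r$ term you single out as the main obstacle is harmless. Since $\p_rQ|_{r=0}=0$, integrating by parts in $(r,z)$ gives $\int\f{2}{r}\p_rQ\,\Delta Q\,dx=2\int\f{|\p_rQ|^2}{r^2}dx+2\pi\int_{\mathbb{R}}|\p_zQ(t,0,z)|^2dz\ge0$. So testing with $-\Delta Q$ yields the dissipation $\|\na^2Q\|_{L^2}^2$ directly, without a five-dimensional reformulation or elliptic regularity.
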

\begin{proof}
In view of [\cite{li2022single}, (3.25)], we see that
\ba\l{EZH}
    \|\na\p_z\cal{H}(t,\cd)\|_{L^2}^2 & +\int_0^t \|\na^2\p_z\cal{H}(s,\cd)\|_{L^2}^2 ds\\
    & \quad \lesssim \|\na\p_z\cal{H}_0\|_{L^2}+\|\na \bm{b}\|_{L^\infty(0,t;L^2)}\|\na \bm{b}\|_{L^\infty(0,t;L^6)}\int_0^t\|\na^2\cal{H}(s,\cd)\|_{L^2}^2ds.
\ea
Recalling Proposition \ref{OH2} and Proposition \ref{ineq:na_b_Lp}, the following estimates hold:
\begin{equation*}
    \begin{aligned}
        \int_0^t\|\na^2\cal{H}(s,\cd)\|_{L^2}^2ds&\leq \exp\big(C(1+E_0)^{4/3}(1+t)^{5/3}\big);\\
        \|\na \bm{b}\|_{L^\infty(0,t;L^2)}\|\na \bm{b}\|_{L^\infty(0,t;L^6)}&\leq \exp\left(\exp\big(C(1+E_0)^{4/3}(1+t)^{5/3}\big)\right).
    \end{aligned}
\end{equation*}
Inserting above estimates in \eqref{EZH}, one concludes \eqref{EZZH}. This finishes the proof of the proposition.
\end{proof}

\begin{proposition}\label{ineq:p_zH}
    Assume that $\nabla \cdot \bm{u}_0={h}_{r,0}=h_{z,0} \equiv 0$. Let $(\bm{u}, \bm{h})$, satisfying \eqref{ineq:prio2}, be the unique local axially symmetric solution of \eqref{eq:Hall-MHD} with the initial data $\left(\bm{u}_0, \bm{h}_0\right) \in H^3\left(\mathbb{R}^3\right)$, the following estimate of  $\partial_z \cal{H}$ holds for any $t\geq 0$:
    \begin{equation}\label{ineq:p_zH2}
    \begin{aligned}
&\|\p_z\cal{H}(s,\cd)\|_{L^1(0,t;L^\infty)}\leq \exp\left((\exp\big(C(1+E_0)^{4/3}(1+t)^{5/3}\big)\right).
    \end{aligned}
    \end{equation}
\end{proposition}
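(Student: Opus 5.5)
The plan is to interpolate $\p_z\cal{H}$ in $L^\infty$ between quantities already controlled in Propositions \ref{OH2} and \ref{ineq:na_p_H}. Then Cauchy--Schwarz in time gives the $L^1_t$ bound.

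The key pointwise-in-time inequality comes from Lemma \ref{ineq:G_N} with $j=0$, $m=2$, $r=q=2$. There $\frac1p=0$ forces $\alpha=3/4$, so
\begin{equation*}
\|\p_z\cal{H}\|_{L^\infty}\lesssim \|\na^2\p_z\cal{H}\|_{L^2}^{3/4}\|\p_z\cal{H}\|_{L^2}^{1/4}.
\end{equation*}
Alternatively, one may use the more symmetric form $\|f\|_{L^\infty}\lesssim\|\na f\|_{L^2}^{1/2}\|\na^2 f\|_{L^2}^{1/2}$, i.e.\ the case $j=0$, $m=2$, $r=2$, $q=6$ combined with the Sobolev embedding $\|f\|_{L^6}\lesssim\|\na f\|_{L^2}$. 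Applying this with $f=\p_z\cal{H}$ gives
\begin{equation*}
\|\p_z\cal{H}(s)\|_{L^\infty}\lesssim \|\na\p_z\cal{H}(s)\|_{L^2}^{1/2}\|\na^2\p_z\cal{H}(s)\|_{L^2}^{1/2}.
\end{equation*}
The factor $\|\na\p_z\cal{H}\|_{L^\infty_tL^2}$ is bounded by \eqref{EZZH}. The factor $\|\na^2\p_z\cal{H}\|_{L^2_tL^2}$ is also bounded by \eqref{EZZH}.

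Integrating over $(0,t)$ and using H\"older in time with exponents $(4,4/3)$ on the second factor yields
\begin{equation*}
\int_0^t\|\p_z\cal{H}\|_{L^\infty}ds\lesssim \|\na\p_z\cal{H}\|_{L^\infty_tL^2}^{1/2}\,t^{3/4}\Big(\int_0^t\|\na^2\p_z\cal{H}\|_{L^2}^2ds\Big)^{1/4}.
\end{equation*}
By \eqref{EZZH} this is at most $C(1+t)^{3/4}\exp\left(\exp\big(C(1+E_0)^{4/3}(1+t)^{5/3}\big)\right)$. The polynomial prefactor $(1+t)^{3/4}$ is absorbed by enlarging $C$ in the inner exponent. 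The fractional powers $1/2$ and $1/4$ of the double exponential are dominated by the double exponential itself, since it is at least $1$. This gives \eqref{ineq:p_zH2}.

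The only point needing care is the applicability of Lemma \ref{ineq:G_N} to $f=\p_z\cal{H}$, which is not compactly supported. The Sobolev step $\|f\|_{L^6}\lesssim\|\na f\|_{L^2}$ requires $f$ to decay at infinity. This holds because $\cal{H}=h_\theta/r$ is regular and, for the $H^3$ solution, lies in $L^2$ with derivatives in $L^2$ (axisymmetric $h_\theta$ vanishing on the axis). If that is doubtful, one can instead use the inhomogeneous bound
\begin{equation*}
\|f\|_{L^\infty}\lesssim\|f\|_{L^2}^{1/4}\|\na^2 f\|_{L^2}^{3/4}
\end{equation*}
with $\|\p_z\cal{H}\|_{L^2}$ controlled by Proposition \ref{OH2}. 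The same H\"older argument, now with exponents $(8/5,8/3)$, closes the estimate as well. Thus the main obstacle is essentially just bookkeeping of the exponential bounds.
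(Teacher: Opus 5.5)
Your argument is correct and is essentially the paper's proof. Both use a Gagliardo--Nirenberg interpolation of $\p_z\cal{H}$ in $L^\infty$, H\"older in time, and the bounds of Proposition \ref{ineq:na_p_H}, with Proposition \ref{OH2} supplying the $L^2$ factor in your inhomogeneous variant. Your exponents are in fact the correct ones. The paper writes $\|\na\p_z\cal{H}\|_{L^2}^{1/4}\|\na^2\p_z\cal{H}\|_{L^2}^{3/4}$, which is not scale-invariant in $\mathbb{R}^3$. Its time factor $t^{5/8}$ matches exactly your version with $\|\p_z\cal{H}\|_{L^2}^{1/4}$, and your symmetric $\tfrac12$--$\tfrac12$ version with $\|\na\p_z\cal{H}\|_{L^2}$ works equally well.
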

\begin{proof}
    We know that by using Lemma \ref{ineq:G_N} and the H\"older inequality 
    \begin{equation*}
    \begin{aligned}
         \int_0^t\|\p_z\cal{H}(s,\cd)\|_{L^\infty}ds&\lesssim \int_0^t \|\na\p_z\cal{H}(s,\cd)\|_{L^2}^{1/4}\|\na^2\p_z\cal{H}(s,\cd)\|_{L^2}^{3/4}ds\\
         &\lesssim t^{5/8}\left(\sup_{0\leq s\leq t}\|\na\p_z\cal{H}(s,\cd)\|_{L^2}\right)^{1/4}\left(\int_0^t \|\na^2\p_z\cal{H}(s,\cd)\|_{L^2}^2ds\right)^{3/8}.
    \end{aligned}
    \end{equation*}
    Thus by Proposition \ref{ineq:na_p_H}, one concludes \eqref{ineq:p_zH2}. This finishes the proof of the proposition.
    \end{proof}
    
\begin{proposition}\label{ineq:omega_the}
Assume that $\nabla \cdot \bm{u}_0={h}_{r,0}=h_{z,0} \equiv 0$. Let $(\bm{u}, \bm{h})$, satisfying \eqref{ineq:prio2}, be the unique local axially symmetric solution of \eqref{eq:Hall-MHD} with the initial data $\left(\bm{u}_0, \bm{h}_0\right) \in H^3\left(\mathbb{R}^3\right)$, the following estimate of  $\omega_\theta$ holds for any $t\geq 0$:
\begin{equation}\label{Omage_1}
\|\omega_\theta(t,\cdot)\|_{L^\infty} \leq \exp\left(\exp\big(C(1+E_0)^{4/3}(1+t)^{5/3}\big)\right).
\end{equation}
\end{proposition}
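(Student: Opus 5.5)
We will do an $L^\infty$ estimate directly on the transport equation for $\omega_\theta$ already used in the proof of Proposition \ref{ineq:na_b_Lp}:
\begin{equation*}
    \p_t\omega_\theta + (u_r\p_r + u_z\p_z)\omega_\theta = \frac{u_r}{r}\omega_\theta - \frac{2}{r}u_\theta\omega_r - \p_z\mathcal{H}\,h_\theta .
\end{equation*}
Viewing this as $\p_t v+\bm{b}\cdot\na v = A v + F$ with $A=\frac{u_r}{r}$ and $F=-2\frac{u_\theta}{r}\omega_r-\p_z\cal{H}\,h_\theta$, Lemma \ref{ineq:uLp} with $p=\infty$ gives
\begin{equation*}
\|\omega_\theta(t,\cdot)\|_{L^\infty}\le\Big(\|\omega_{\theta,0}\|_{L^\infty}+\int_0^t\|F(s,\cdot)\|_{L^\infty}ds\Big)\exp\Big(\int_0^t\big\|\tfrac{u_r}{r}(s,\cdot)\big\|_{L^\infty}ds\Big).
\end{equation*}
The advantage over the $L^p$ argument is that the only term which is not already bounded in $L^\infty$ by earlier results is $\p_z\cal{H}$, and for it we have the $L^1_tL^\infty$ bound of Proposition \ref{ineq:p_zH}. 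This is the reason for proving Proposition \ref{ineq:p_zH} beforehand.

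\textbf{Bounding the source terms.} First, $\|\omega_{\theta,0}\|_{L^\infty}\lesssim E_0^{1/2}$ by Sobolev embedding of $H^3$. For the swirl term, $\|\frac{u_\theta}{r}\|_{L^\infty}$ is bounded by \eqref{EUTHR} with $p=\infty$. We use \eqref{ineq:prio2} to bound $\int_0^t\|\omega_r\|_{L^\infty}ds\le 1$, or at least $\lesssim 1+t$. Together these give
\begin{equation*}
\int_0^t \big\|\tfrac{u_\theta}{r}\omega_r\big\|_{L^\infty}ds\le \exp\left(\exp\big(C(1+E_0)^{4/3}(1+t)^{5/3}\big)\right).
\end{equation*}
For the Hall-type term, we combine $\sup_s\|h_\theta\|_{L^\infty}$ from Proposition \ref{ineq:h_the} (the $p\to\infty$ case) with Proposition \ref{ineq:p_zH}, which yields
\begin{equation*}
\int_0^t\|\p_z\cal{H}\,h_\theta\|_{L^\infty}ds\le \|h_\theta\|_{L^\infty_tL^\infty}\|\p_z\cal{H}\|_{L^1_tL^\infty}.
\end{equation*}
Both factors are double exponentials of the stated form, so their product is one as well after enlarging $C$.

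\textbf{Bounding the amplification factor.} The exponential factor is controlled by \eqref{EURR}. It gives $\int_0^t\|\frac{u_r}{r}\|_{L^\infty}\le C(1+E_0)(1+t)^{3/2}\exp(C(1+E_0)^{4/3}(1+t)^{5/3})$, so its exponential is again of double-exponential type.

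\textbf{Absorbing constants.} Multiplying the pieces together, every polynomial prefactor in $(1+E_0)$ and $(1+t)$, and every product of double exponentials, can be absorbed into a single $\exp(\exp(C(1+E_0)^{4/3}(1+t)^{5/3}))$ by enlarging $C$. For example, $(1+E_0)(1+t)^{3/2}e^{X}\le e^{C'X}$ when $X\ge (1+E_0)^{4/3}(1+t)^{5/3}\ge1$, and $e^{e^{a}}e^{e^{b}}\le e^{e^{\max(a,b)+1}}$. This gives \eqref{Omage_1}.

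I do not expect a real obstacle here; the main care is in this bookkeeping. The only step needing attention is the justification of Lemma \ref{ineq:uLp} at $p=\infty$, since $\bm{b}$ is divergence-free and $\omega_\theta$ is regular enough on $[0,T_*)$. If that is an issue, one can instead take $p\to\infty$ in the $L^p$ estimate, whose constants are $p$-independent once $K_3$ and $K_4$ are estimated with $L^\infty$ norms on $\frac{u_\theta}{r}$ and $\p_z\cal{H}$.
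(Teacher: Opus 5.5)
Your proposal is correct and takes essentially the same route as the paper. Both do an $L^\infty$ estimate of the $\omega_\theta$ equation with $\frac{u_r}{r}$ as the amplification coefficient, bound the sources via \eqref{ineq:prio2}, \eqref{EUTHR}, Proposition \ref{ineq:h_the} and Proposition \ref{ineq:p_zH}, and close with \eqref{EURR}; the only cosmetic difference is that you invoke the Gr\"onwall form of Lemma \ref{ineq:uLp} directly, whereas the paper writes the integral inequality and then applies Gr\"onwall.
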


\begin{proof}
Taking the $L^\infty$ estimate of \eqref{eq:w}$_2$, using the a priori assumption  \eqref{ineq:prio2}, the estimate of $\|\f{u_\theta}{r}(t,\cd)\|_{L^p}$ in \eqref{EUTHR} and Proposition \ref{ineq:h_the}, one finds
\begin{equation*}
\begin{aligned}
    \|\omega_\theta(t,\cdot)\|_{L^\infty} &\lesssim \|\omega_{\theta,0}\|_{L^\infty} + \int_0^t\|\frac{u_r}{r}(s,\cdot)\|_{L^\infty}\|\omega_\theta(s,\cdot)\|_{L^\infty}ds\\
    & \q + \|\omega_r(s,\cd)\|_{L^1(0,t;L^\infty)}\|\frac{u_\theta}{r}(s,\cd)\|_{L^\infty(0,t;L^\infty)} + \|\p_z\mathcal{H}(s,\cd)\|_{L^1(0,t;L^\infty)}\|h_\theta(s,\cd)\|_{L^\infty(0,t;L^\infty)}\\
    &\leq \exp\left(\exp\big(C(1+E_0)^{4/3}(1+t)^{5/3}\big)\right) + \int_0^t\|\frac{u_r}{r}(s,\cdot)\|_{L^\infty}\|\omega_\theta(s,\cdot)\|_{L^\infty}ds.
\end{aligned}
\end{equation*}
Thus Gr\"onwall inequality, together with the estimate of $\int_0^t \|\f{u_r}{r}(s,\cd)\|_{L^\infty}ds$ in \eqref{EURR}, indicates
\begin{equation*}
    \|\omega_\theta(t,\cdot)\|_{L^\infty} \leq \exp\left(\exp\big(C(1+E_0)^{4/3}(1+t)^{5/3}\big)\right).
\end{equation*}
This finishes the proof of the proposition.
\end{proof}

Now we are ready to prove next proposition, which is the estimate of $\|\nabla \bm{h}\|_{{L^2}(0,t;L^\infty)}$ and $\|\na^2 \bm{h}\|_{L^2(0,t;L^3)}^2$.

\begin{proposition}\label{ineq:nah_L2}
    Assume that $\nabla \cdot \bm{u}_0 = h_{r,0} = h_{z,0} \equiv 0$. Let $(\bm{u},\bm{h})$, satisfying \eqref{ineq:prio2}, be the unique local axially symmetric solution of \eqref{eq:Hall-MHD} with the initial date $(\bm{u}_0,\bm{h}_0) \in H^3(\mathbb{R}^3) $. Then the magnetic field $\bm{h}$ satisfies the following estimate for any $t\geq 0$:
    \begin{equation}\label{reee}
    \|\nabla \bm{h}\|_{{L^2}(0,t;L^\infty)} + \|\na^2 \bm{h}\|_{L^2(0,t;L^3)}^2 \leq \exp \left(\exp \left(\exp\big(C(1+E_0)^{4/3}(1+t)^{5/3}\big)\right)\right).
\end{equation}
\end{proposition}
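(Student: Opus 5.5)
Since $\bm{h}=h_\theta\bm{e_\theta}=r\mathcal{H}\,\bm{e_\theta}$, the plan is to write $h_\theta$ as the solution of a heat-type equation whose forcing is built from quantities already bounded in Propositions \ref{OH2}--\ref{ineq:omega_the}. We then read off $L^2_tL^\infty$ and $L^2_tW^{2,3}$ information from maximal regularity, Lemma \ref{ineq:heat}, together with the heat kernel bounds of Lemma \ref{BXXL}.

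Concretely, the vector field $\bm{h}$ satisfies in Cartesian form
\[
\p_t \bm{h}-\Delta\bm{h} = -\bm{u}\cdot\nabla\bm{h}+\bm{h}\cdot\nabla\bm{u}-\nabla\times\big((\nabla\times\bm{h})\times\bm{h}\big).
\]
In the axisymmetric setting with $\bm{h}=h_\theta\bm{e_\theta}$, the forcing reduces to $\big(-\bm{b}\cdot\nabla h_\theta+\frac{u_r}{r}h_\theta+\frac{1}{r}\p_z h_\theta^2\big)\bm{e_\theta}$. Here $\frac{1}{r}\p_z h_\theta^2=2h_\theta\p_z\mathcal{H}$.

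Duhamel gives
\[
\bm{h}(t)=e^{t\Delta}\bm{h}_0+\int_0^t e^{(t-s)\Delta}F(s)\,ds,\qquad F=\big(-\bm{b}\cdot\nabla h_\theta+\tfrac{u_r}{r}h_\theta+2h_\theta\p_z\mathcal{H}\big)\bm{e_\theta}.
\]
By Lemma \ref{ineq:heat} with $p=3$, $q=2$,
\[
\|\nabla^2\bm{h}\|_{L^2_tL^3}\lesssim \|\bm{h}_0\|_{H^3}+\|F\|_{L^2_tL^3}.
\]
We bound each term of $F$ in $L^2_tL^3$:
\begin{itemize}
\item $\|\frac{u_r}{r}h_\theta\|_{L^3}\le \|\frac{u_r}{r}\|_{L^\infty}\|h_\theta\|_{L^3}$, controlled by \eqref{EURR}-type pointwise bounds ($\|\Omega\|_{L^2}^{1/2}\|\Omega\|_{L^6}^{1/2}$) and Proposition \ref{ineq:h_the}.
\item $\|h_\theta\p_z\mathcal{H}\|_{L^3}\le \|h_\theta\|_{L^\infty}\|\p_z\mathcal{H}\|_{L^3}$, with $\|\p_z\mathcal{H}\|_{L^3}\lesssim\|\nabla\mathcal{H}\|_{L^2}^{1/2}\|\nabla^2\mathcal{H}\|_{L^2}^{1/2}$ handled by Proposition \ref{OH2}. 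Alternatively, Proposition \ref{ineq:na_p_H} gives it in $L^\infty_t$.
\item $\bm{b}\cdot\nabla h_\theta$ is the dangerous term, since it contains $\nabla\bm{h}$ itself. We use $\|\bm{b}\|_{L^\infty}\lesssim\|\bm{b}\|_{L^2}^{1/4}\|\nabla\bm{b}\|_{L^6}^{3/4}$-type interpolation, using Proposition \ref{ineq:na_b_Lp} and energy, to get $\|\bm{b}\|_{L^\infty_tL^\infty}\le\exp\exp(\cdots)$. Then $\|\bm{b}\cdot\nabla h_\theta\|_{L^3}\le\|\bm{b}\|_{L^\infty}\|\nabla h_\theta\|_{L^3}$. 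For $\|\nabla h_\theta\|_{L^3}$ we interpolate $\|\nabla h_\theta\|_{L^3}\lesssim\|\nabla h_\theta\|_{L^2}^{1/2}\|\nabla^2 h_\theta\|_{L^2}^{1/2}$, or more simply between $\|h_\theta\|_{L^\infty}$ and $\|\nabla^2\bm{h}\|_{L^3}$ via Gagliardo--Nirenberg: $\|\nabla\bm{h}\|_{L^3}\lesssim\|\bm{h}\|_{L^\infty}^{1/2}\|\nabla^2\bm{h}\|_{L^3}^{1/2}$. Young's inequality then absorbs $\tfrac12\|\nabla^2\bm{h}\|_{L^2_tL^3}$ into the left side, leaving $\|\bm{b}\|_{L^\infty_tL^\infty}^2\|h_\theta\|_{L^\infty_tL^\infty}t^{1/2}$.
\end{itemize}
All the constants so produced are of the form $\exp\exp(C(1+E_0)^{4/3}(1+t)^{5/3})$. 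Squaring and composing these keeps the bound within $\exp\exp\exp(\cdots)$, which matches \eqref{reee}. If the absorption is awkward on the whole interval, I would instead run the estimate on short intervals, or apply Gronwall to $\|\nabla^2\bm{h}\|_{L^2(0,t;L^3)}^2$. This may be exactly where the third exponential enters.

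Finally, for $\|\nabla\bm{h}\|_{L^2_tL^\infty}$, Gagliardo--Nirenberg in $\mathbb{R}^3$ gives
\[
\|\nabla\bm{h}\|_{L^\infty}\lesssim\|\bm{h}\|_{L^\infty}^{1/3}\|\nabla^2\bm{h}\|_{L^6}^{2/3}
\]
or a similar interpolation. A route that avoids needing $L^6$ is to interpolate between $\|\nabla^2\bm{h}\|_{L^3}$ and a higher quantity. The cleanest option is to note that $\nabla\bm{h}$ is expressed through $\mathcal{H}$, $r\nabla\mathcal{H}$ and $\bm{e_\theta}$-geometry, so that $\|\nabla\bm{h}\|_{L^\infty}\lesssim\|\mathcal{H}\|_{L^\infty}+\|r\nabla\mathcal{H}\|_{L^\infty}$. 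The piece $r\p_z\mathcal{H}$ is controlled by Proposition \ref{ineq:p_zH} ($L^1_tL^\infty$) combined with the interpolation there, upgraded to $L^2_t$ using $\|\nabla\p_z\mathcal{H}\|_{L^2}^{1/4}\|\nabla^2\p_z\mathcal{H}\|_{L^2}^{3/4}$ and Proposition \ref{ineq:na_p_H}. The radial piece is treated by the maximal regularity bound above with the embedding $W^{2,3}\cap L^\infty\hookrightarrow$ a log-Lipschitz class. The main obstacle is this $L^\infty$ control of $\nabla\bm{h}$. My first attempt would be the Duhamel pointwise bound directly. Lemma \ref{BXXL} gives
\[
\|\nabla e^{(t-s)\Delta}F\|_{L^\infty}\lesssim (t-s)^{-1/2-1/2}\|F\|_{L^3},
\]
which is borderline. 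To fix it, I would pass to $\|F\|_{L^6}$, using the $L^6$ bounds from Propositions \ref{ineq:na_b_Lp} and \ref{ineq:na_p_H}. That gives the exponent $-3/4$, which is integrable, and Young's convolution inequality in time then yields the $L^2_tL^\infty$ bound.
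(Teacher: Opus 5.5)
Your strategy (maximal regularity for $\bm h$ itself in $L^2_tL^3$, absorbing $\bm b\cd\na h_\theta$, then a Duhamel pointwise bound for $\na\bm h$) differs from the paper's and can be repaired. As written, however, it does not prove the statement, because two steps fail.

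\textbf{The interpolation in the absorption step is false.} The inequality $\|\na\bm h\|_{L^3}\lesssim\|\bm h\|_{L^\infty}^{1/2}\|\na^2\bm h\|_{L^3}^{1/2}$ fails by scaling: under $\bm h(x)\mapsto\bm h(\lambda x)$ the left side is invariant, while the right side scales like $\lambda^{1/2}$. Admissible replacements are $\|\na\bm h\|_{L^3}\lesssim\|\bm h\|_{L^3}^{1/2}\|\na^2\bm h\|_{L^3}^{1/2}$ or $\|\na\bm h\|_{L^6}\lesssim\|\bm h\|_{L^\infty}^{1/2}\|\na^2\bm h\|_{L^3}^{1/2}$. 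Either one, combined with Proposition \ref{ineq:h_the}, restores your absorption. Your other option, via $\|\na^2 h_\theta\|_{L^2}$, presupposes an $L^2$-level bound on $\na^2\bm h$ that you never derive. Proposition \ref{OH2} does not supply it, since $\na^2h_\theta$ contains $r\na^2\cal H$.

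\textbf{The $L^2(0,t;L^\infty)$ bound on $\na\bm h$ is not established.} This is the more serious problem.
\begin{itemize}
\item Proposition \ref{ineq:p_zH} controls $\p_z\cal H$, not $r\p_z\cal H=\p_z h_\theta$. The latter carries the unbounded weight $r$ and is the unknown itself.
\item $\na\bm h\in W^{1,3}(\bb R^3)$ is the borderline case: it gives BMO, not $L^\infty$. So log-Lipschitz control of $\bm h$ does not help here.
\item The Duhamel option with $\|\na e^{\tau\Delta}F\|_{L^\infty}\lesssim\tau^{-3/4}\|F\|_{L^6}$ is the right idea. But Propositions \ref{ineq:na_b_Lp} and \ref{ineq:na_p_H} do not put $F$ in $L^2(0,t;L^6)$. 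The term $\bm b\cd\na h_\theta$ requires $\na\bm h\in L^2(0,t;L^6)$, which none of the cited results contain.
\end{itemize}
You can supply the missing bound from the corrected first step: $\|\na\bm h\|_{L^6}\lesssim\|\bm h\|_{L^\infty}^{1/2}\|\na^2\bm h\|_{L^3}^{1/2}$ places $\na\bm h$ in $L^4(0,t;L^6)$. The other pieces of $F$ are fine, e.g.\ $\|h_\theta\p_z\cal H\|_{L^6}\lesssim\|h_\theta\|_{L^\infty}\|\na^2\cal H\|_{L^2}$, which is in $L^2_t$ by Proposition \ref{OH2}.

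\textbf{What the paper does instead.} It avoids these issues by working in $L^2$ one derivative higher.
\begin{enumerate}
\item It tests the $\bm h$-equation against $\Delta\bm h$ and applies Gronwall, obtaining $\na\bm h\in L^\infty_tL^2\cap L^2_t\dot H^1$. This Gronwall step, through $\exp(\int_0^t\|\na\bm b\|_{L^2}^4ds)$, is where the third exponential enters.
\item It applies $L^2_tL^2$ maximal regularity to $\p_t\na\bm h-\Delta\na\bm h=\na(f\bm{e_\theta})$, using Proposition \ref{ineq:na_p_H} for $\na\p_z\cal H$, to get $\na^3\bm h\in L^2_tL^2$.
\item Both target norms then follow from $\|\na\bm h\|_{L^\infty}\lesssim\|\na^2\bm h\|_{L^2}^{1/2}\|\na^3\bm h\|_{L^2}^{1/2}$ and $\|\na^2\bm h\|_{L^3}^2\lesssim\|\na^2\bm h\|_{L^2}\|\na^3\bm h\|_{L^2}$.
\end{enumerate}
Once repaired, your route uses absorption instead of Gronwall and needs no bound on $\na\p_z\cal H$, so it would not generate that extra exponential. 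The price is $L^p$ maximal regularity with $p\neq 2$ and a separate Duhamel step for the gradient.
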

\begin{proof}
    First we perform $L^2$ inner product of \eqref{eq:Hall-MHD}$_2$ with $\Delta h$ and integrate by parts to obtain
\begin{equation*}\label{ineq:nah_L2_1}
\begin{aligned}
        & \q \frac{d}{dt}\|\nabla \bm{h}(t,\cdot)\|_{L^2}^2 + \|\nabla^2 \bm{h}(t,\cdot)\|_{L^2}^2\\
        &\leq \underbrace{\int_{\mathbb{R}^3} \left|\left(\frac{h_\theta u_r}{r} - (u_r\p_r + u_z\p_z)h_\theta\right)\bm{e_\theta} \right||\Delta \bm{h}|dx}_{Q_1} + \underbrace{\int_{\mathbb{R}^3}\left|\frac{\p_z(h_\theta)^2}{r}\bm{e_\theta}\right||\Delta \bm{h}|dx}_{Q_2}.
\end{aligned}
\end{equation*}
Because of the previous $L^2$ bound of $\bm{b}$ and $\nabla \bm{b}$ in \eqref{ineq:nau_b} and Proposition \ref{ineq:na_b_Lp}, the Cauchy inequality and interpolation tell us
\begin{equation}\label{ineq:Q1}
\begin{aligned}
    Q_1 &\leq \left(\|\frac{h_\theta u_r}{r}(t,\cdot)\|_{L^2} + \|\bm{b}\cdot\nabla h_\theta(t,\cdot)\|_{L^2}\right)\|\Delta \bm{h}(t,\cdot)\|_{L^2}\\
    &\lesssim \|\mathcal{H}_0\|_{L^{3}}^2\|\bm{b}(t,\cdot)\|_{L^6}\|\nabla^2 \bm{h}(t,\cdot)\|_{L^2} + \|\nabla \bm{b}(t,\cdot)\|_{L^2}\|\nabla \bm{h}(t,\cdot)\|_{L^2}^{\frac{1}{2}}\|\nabla^2 \bm{h}(t,\cdot)\|_{L^2}^{\frac{3}{2}}\\
    &\leq \frac{1}{4}\|\nabla^2 \bm{h}(t,\cdot)\|_{L^2}^2 + C\Big(\|\cal{H}_0\|_{L^3} ^ 2 + \|\nabla \bm{h}(t,\cdot)\|_{L^2}^2\|\na\bm{b}(t,\cd)\|_{L^2}^2\Big)\|\nabla\bm{b}(t,\cdot)\|_{L^2}^2.
\end{aligned}
\end{equation}
Meanwhile, the term $Q_2$ can be estimates exactly as $Q_1$, which follows that
\begin{equation}\label{ineq:Q2}
\begin{aligned}
    Q_2 &\leq 2\|\mathcal{H}_0\|_{L^{\infty}}\|\nabla \bm{h}(t,\cdot)\|_{L^2}\|\Delta \bm{h}(t,\cdot)\|_{L^2}\leq \frac{1}{4}\|\nabla^2 \bm{h}(t,\cdot)\|_{L^2}^2 + \|\cal{H}_0\|_{L^\infty}^2\|\nabla \bm{h}(t,\cdot)\|_{L^2}^2.
\end{aligned}
\end{equation}
Subsituting \eqref{ineq:Q1} and \eqref{ineq:Q2} into \eqref{ineq:na_H_L2}, we have
\begin{equation*}\label{es_nh2}
\begin{aligned}
    \q \frac{d}{dt}\|\nabla \bm{h}(t,\cdot)\|_{L^2}^2 + \|\nabla^2 \bm{h}(t,\cdot)\|_{L^2}^2 \leq& C\Big(\|\cal{H}_0\|_{L^3} ^ 2 + \|\nabla \bm{h}(t,\cdot)\|_{L^2}^2\|\na\bm{b}(t,\cd)\|_{L^2}^2\Big)\|\nabla\bm{b}(t,\cdot)\|_{L^2}^2 \\
   & + \|\cal{H}_0\|_{L^\infty}^2\|\nabla \bm{h}(t,\cdot)\|_{L^2}^2.
\end{aligned}
\end{equation*}
Then using the Gr\"onwall inequality, one concludes 
\begin{equation}\label{es_nh3}
\begin{aligned}
      \|\nabla \bm{h}(t,\cdot)\|_{L^2}^2 + \int_{0}^{t}\|\nabla^2 \bm{h}(s,\cdot)\|_{L^2}^2ds & \leq \Big(\|\na\bm{h}_0\|_{L^2}^2 + \|\cal{H}_0\|_{L^3}^2 \int_0^t \|\na\bm{b}(s,\cd)\|_{L^2}^2ds \Big)\\
    & \qquad \times\exp\Big(\int_{0}^{t}(\|\nabla\bm{b}(s,\cdot)\|_{L^2}^4 + \|\cal{H}_0\|_{L^\infty}^2 )ds\Big) \\
     & \leq \exp \left(\exp \left(\exp\big(C(1+E_0)^{4/3}(1+t)^{5/3}\big)\right)\right).
\end{aligned}
\end{equation}
Based on this $L^2\cap \dot H^1$ bound of $\nabla \bm{h}$, now we are in a position to derive a higher order regularity of $\bm{h}$ by the maximal regularity of heat flow. By direct computation from \eqref{eq:Hall-MHD}$_2$ and \eqref{eq:Ax_Hall-MHD}$_4$, one derives
\begin{equation*}
    \p_t\nabla \bm{h} - \Delta\nabla \bm{h} = \nabla\big(f(r,z)\bm{e_\theta}\big),
\end{equation*}
where
\begin{equation*}
    f(t,r,z) = \frac{h_\theta u_r}{r} - (u_r\p_r + u_z\p_z)h_\theta + \frac{\p_z(h_\theta)^2}{r}.
\end{equation*}
\begin{equation*}
    \nabla\big(f(r,z)\bm{e_\theta}\big) = \p_rf\bm{e_\theta}\otimes\bm{e_r} - \frac{f}{r}\bm{e_r}\otimes\bm{e_\theta} + \p_zf\bm{e_\theta}\otimes\bm{e_z}.
\end{equation*}
By the Duhamel formula, we have
\[
\na\bm{h}(t)=e^{t\Delta}\na\bm{h}_0 + \int_0^te^{(t-s)\Delta}\na({f}(s)\bm{e_\theta})\,ds.
\]
Hence
\begin{equation*}
    \na^3\bm{h}(t)=\na^2e^{t\Delta}\na\bm{h_0} + \int_0^t \na^2 e^{(t-s)\Delta}\na(f(s)\bm{e_\theta})ds.
\end{equation*}
For the initial data term, by Lemma \ref{BXXL}, with $l=r=2$, $\alpha=0$ and $g=\nabla^3\bm{h_0}$, we have
\[
\|\nabla^2 e^{t\Delta}\nabla \bm{h_0}\|_{L^2}=\|e^{t\Delta}\nabla^3\bm{h_0}\|_{L^2}\le C\|\nabla^3\bm{h_0}\|_{L^2}.
\]
The above inequality was given by Lemma \eqref{ineq:heat}. Therefore,
\[
\|\nabla^2 e^{t\Delta}\nabla \bm{h_0}\|_{L^2(0,t;L^2)}
\le Ct^{1/2}\|\nabla^3\bm{h_0}\|_{L^2}\le Ct^{1/2}E_0^{1/2}.
\]
Together with the above inequality, by lemma \ref{ineq:heat} one can deduce
\[
\|\na^3\bm{h}\|_{L^2(0,t;L^2)} \leq t^{1/2}E_0^{1/2} + \|\na(f(r,z))\bm{e_\theta}\|_{L^2(0,t;L^2)}.
\]
For the second term on the right hand side, we begin with the estimate of $\p_rf$. Noting that
\begin{equation*}
    \p_rf = \underbrace{\p_rh_\theta\frac{u_r}{r} - \p_r\bm{b}\cdot\nabla h_\theta}_{|\nabla\bm{b}||\nabla \bm{h}|} + \underbrace{\mathcal{H}\p_ru_r - \mathcal{H}\frac{u_r}{r}}_{|\mathcal{H}||\nabla\bm{b}|} -\underbrace{\bm{b}\cdot\nabla\p_rh_\theta}_{|\bm{b}||\nabla^2 \bm{h}|} + 2\underbrace{\p_{rz}^2h_\theta\mathcal{H}}_{|\mathcal{H}||\nabla^2 \bm{h}|} + 2\underbrace{\p_z\mathcal{H}\p_rh_\theta}_{|\p_z\mathcal{H}||\nabla \bm{h}|} - 2\underbrace{\mathcal{H}\p_z\mathcal{H}}_{|\mathcal{H}||\p_z\mathcal{H}|}.
\end{equation*}
Previous result, including Lemma \ref{ineq:H_Lp},  Proposition \ref{OH2}, Proposition \ref{ineq:na_b_Lp}, Proposition \ref{ineq:na_p_H}, together with \eqref{es_nh3}, indicate that
\begin{equation*}
\begin{aligned}
\|\mathcal{H}(s,\cd)\|_{L^\infty(0,t;{L^2\cap L^\infty})} &\leq E_0,\\
\|\na\bm{b}(s,\cd)\|_{L^\infty(0,t;{L^2\cap L^6})} &\leq \exp \left(\exp\big(C(1+E_0)^{4/3}(1+t)^{5/3}\big)\right),\\
\|\na\mathcal{H}(s,\cd)\|_{L^\infty(0,t;{L^2})} &\leq \exp\big(C(1+E_0)^{4/3}(1+t)^{5/3}\big),\\
\|\na\p_z\mathcal{H}(s,\cd)\|_{L^\infty(0,t;{L^2})}&\leq \exp\left(\exp\big(C(1+E_0)^{4/3}(1+t)^{5/3}\big)\right),\\
\|\na \bm{h}(s,\cd)\|_{{L^\infty(0,t;{L^2})}\cap {L^2(0,t;{\dot{H}^1)}}} &\leq \exp \left(\exp \left(\exp\big(C(1+E_0)^{4/3}(1+t)^{5/3}\big)\right)\right).
\end{aligned}
\end{equation*}
Using Lemma \ref{ineq:G_N} and the H\"older inequality, it is clear that
\begin{equation}\label{ineq:p_r_f}
\begin{aligned}
    \|\p_rf\|_{L^2}^2 &\lesssim \|\nabla \bm{b}\nabla \bm{h}\|_{{L^2}}^2 + \|\mathcal{H}\nabla\bm{b}\|_{{L^2}}^2 + \|\bm{b}\nabla^2 \bm{h}\|_{{L^2}}^2 + \|\mathcal{H}\nabla^2 \bm{h}\|_{{L^2}}^2 + \|\p_z\mathcal{H}\nabla \bm{h}\|_{{L^2}}^2 + \|\mathcal{H}\p_z\mathcal{H}\|_{{L^2}}^2\\
    &\lesssim \|\nabla\bm{b}\|_{L^6}^2\|\nabla \bm{h}\|_{L^2}\|\nabla^2 \bm{h}\|_{L^2} + \|\mathcal{H}\|_{L^{\infty}}^2\|\nabla \bm{b}\|_{L^2}^2 + \|\nabla\bm{b}\|_{L^6}^2\|\nabla^2\bm{h}\|_{L^2}^2 + \|\mathcal{H}\|_{L^{\infty}}^2\|\nabla^2\bm{h}\|_{L^2}^2 \\
    &\q + \|\p_z\mathcal{H}\|_{L^{2}}\|\na\p_z\mathcal{H}\|_{L^2}\|\nabla^2 \bm{h}\|_{L^2}
    ^2 + \|\mathcal{H}\|_{L^{\infty}}^2\|\p_z\mathcal{H}\|_{L^2}^2.
\end{aligned}
\end{equation}
Thus one can deduce that 
\begin{equation*}
    \|\p_r f\|_{{L^2(0,t;L^2)}} \leq \exp \left(\exp \left(\exp\big(C(1+E_0)^{4/3}(1+t)^{5/3}\big)\right)\right).
\end{equation*}
Moreover, by adapting the above argument, one can further derive that 
\begin{equation}\label{ineq:p_z_f}
    \|\frac{f}{r}\|_{{L^2(0,t;L^2)}} + \|\p_zf\|_{{L^2(0,t;L^2)}} \leq \exp \left(\exp \left(\exp\big(C(1+E_0)^{4/3}(1+t)^{5/3}\big)\right)\right).
\end{equation}
Thus, together with \eqref{ineq:p_r_f} and \eqref{ineq:p_z_f}, one can deduce
\begin{equation*}
    \|\nabla\big(f(r,z)\bm{e_\theta}\big)\|_{{L^2(0,t;L^2)}} \leq \exp \left(\exp \left(\exp\big(C(1+E_0)^{4/3}(1+t)^{5/3}\big)\right)\right).
\end{equation*}

Using Lemma \ref{ineq:heat}, one infers that by Sobolev embedding
\begin{equation*}
\begin{aligned}
     \|\nabla^3 \bm{h}\|_{{L^2}(0,t;L^2)} &\lesssim  t^{1/2}E_0^{1/2}+\|\nabla\big(f(r,z)\bm{e_\theta}\big)\|_{{L^2}(0,t;L^2)} \\
     & \leq \exp \left(\exp \left(\exp\big(C(1+E_0)^{4/3}(1+t)^{5/3}\big)\right)\right),
\end{aligned}
\end{equation*}
which indicates
\begin{equation}\label{werr}
    \|\nabla \bm{h}\|_{{L^2}(0,t;L^\infty)} \leq \exp \left(\exp \left(\exp\big(C(1+E_0)^{4/3}(1+t)^{5/3}\big)\right)\right).
\end{equation}
and
\begin{equation}\label{werr2}
\begin{aligned}
    \int_0^t\|\na^2 \bm{h}\|_{L^3}^2dt &\leq C\int_0^t\|\na ^3\bm{h}\|_{L^2}\|\na^2\bm{h}\|_{L^2}dt \leq C\left(\int_0^t\|\na ^3\bm{h}\|_{L^2}^2dt\right)^\frac12\left(\int_0^t\|\na ^2\bm{h}\|_{L^2}^2dt\right)^\frac12 \\
    &\leq \exp \left(\exp \left(\exp\big(C(1+E_0)^{4/3}(1+t)^{5/3}\big)\right)\right).
\end{aligned}
\end{equation}
Adding \eqref{werr} and \eqref{werr2} together, one concludes \eqref{reee}. This finishes the proof in the proposition.
\end{proof}

After the above preparations, we are now in a position to focus on the higher-order estimates.
\begin{proposition}\label{ineq:High2}
Under the same condition as Theorem \ref{theo}. let $(\bm{u},\bm{h})$ be a smooth axially symmetric solution on $[0,T_*]$ of \eqref{eq:Hall-MHD}, then 
\begin{equation*}
    e+E(t) \leq \left( (e+E_0)\exp \left(\int_0^t (1+ \|\na^2\bm{h}(s,\cd)\|_{L^3}^2 +\|\na\bm{h}(s,\cd)\|_{L^\infty}^2)ds\right)\right)^{\exp\left(\int_0^t \|\na\times\bm{u}(s,\cd)\|_{L^\infty}ds\right)}.
\end{equation*}
\end{proposition}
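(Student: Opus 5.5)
Here $E(t):=\|(\bm{u},\bm{h})(t,\cdot)\|_{H^3}^2$, and the plan is a standard $H^3$ energy estimate closed by the logarithmic inequality of Lemma \ref{ineq:BMO}. Since the system \eqref{eq:Hall-MHD} conserves the $L^2$ energy (Lemma \ref{ineq:Lp_con}), only $\dot H^3$ needs treatment. Apply $\nabla^3$ to \eqref{eq:Hall-MHD}$_{1,2}$ and pair with $\nabla^3\bm{u}$ and $\nabla^3\bm{h}$ respectively. The transport terms $\bm{u}\cdot\nabla\nabla^3\bm{u}$ and $\bm{u}\cdot\nabla\nabla^3\bm{h}$ vanish by $\nabla\cdot\bm{u}=0$. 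The top-order coupling terms $\bm{h}\cdot\nabla\nabla^3\bm{h}$ (momentum) and $\bm{h}\cdot\nabla\nabla^3\bm{u}$ (induction) cancel after integration by parts, using $\nabla\cdot\bm{h}=0$. What remains are commutators $[\nabla^3,\bm{u}\cdot\nabla]\bm{u}$, $[\nabla^3,\bm{u}\cdot\nabla]\bm{h}$, $[\nabla^3,\bm{h}\cdot\nabla]\bm{h}$, $[\nabla^3,\bm{h}\cdot\nabla]\bm{u}$, and the Hall term. The resistivity supplies the good term $\|\nabla^4\bm{h}\|_{L^2}^2$ on the left.

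By Lemma \ref{ineq:commu}, the commutators are bounded by $E(t)\big(\|\nabla\bm{u}\|_{L^\infty}+\|\nabla\bm{h}\|_{L^\infty}\big)$. The pure-velocity commutator gives $\|\nabla\bm{u}\|_{L^\infty}E$ and cannot be absorbed. For the mixed ones, $\|\nabla\bm{h}\|_{L^\infty}E$ is bounded by $(1+\|\nabla\bm{h}\|_{L^\infty}^2)E$. Whenever a term carries $\|\nabla^3\bm{h}\|$ paired against $\|\nabla^3\bm{u}\|$ with an $L^\infty$ factor of $\nabla\bm{u}$, I would redistribute it using the dissipation, $\|\nabla^3\bm{h}\|_{L^2}\lesssim\|\nabla^2\bm{h}\|^{1/2}\|\nabla^4\bm{h}\|^{1/2}$, where helpful.

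\textbf{The Hall term is the main obstacle.} I would write $\nabla\times((\nabla\times\bm{h})\times\bm{h})$ and, after one integration by parts, estimate $\int\nabla^3\big((\nabla\times\bm{h})\times\bm{h}\big)\cdot\nabla^3(\nabla\times\bm{h})\,dx$. The top-order piece $\big((\nabla^3\nabla\times\bm{h})\times\bm{h}\big)\cdot\nabla^3\nabla\times\bm{h}$ vanishes pointwise because $(a\times\bm{h})\cdot a=0$. The remaining terms are bounded by
\[
\|\nabla\bm{h}\|_{L^\infty}\|\nabla^3\bm{h}\|_{L^2}\|\nabla^4\bm{h}\|_{L^2}+\|\nabla^2\bm{h}\|_{L^3}\|\nabla^3\bm{h}\|_{L^6}\|\nabla^4\bm{h}\|_{L^2},
\]
with $\|\nabla^3\bm{h}\|_{L^6}\lesssim\|\nabla^4\bm{h}\|_{L^2}$. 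The second term is dangerous, since it is quadratic in $\|\nabla^4\bm{h}\|_{L^2}$. I would try to avoid it by splitting the Leibniz expansion so that two derivatives on each factor always pair as $\nabla^2\bm{h}\cdot\nabla^3\bm{h}$ against $\nabla^4\bm{h}$. I would then interpolate $\|\nabla^3\bm{h}\|_{L^3}\lesssim\|\nabla^3\bm{h}\|_{L^2}^{1/2}\|\nabla^4\bm{h}\|_{L^2}^{1/2}$ and use $\|\nabla^2\bm{h}\|_{L^6}$ via the $L^3$ norm, so that Young's inequality yields
\[
\tfrac12\|\nabla^4\bm{h}\|_{L^2}^2+C\big(\|\nabla^2\bm{h}\|_{L^3}^2+\|\nabla\bm{h}\|_{L^\infty}^2\big)E,
\]
at the cost of powers that I would arrange to be exactly quadratic in the low norms. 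This is precisely why the $L^2_tL^3$ and $L^2_tL^\infty$ quantities from Proposition \ref{ineq:nah_L2} appear in the statement.

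Combining the above gives
\[
\frac{d}{dt}E\lesssim\big(1+\|\nabla^2\bm{h}\|_{L^3}^2+\|\nabla\bm{h}\|_{L^\infty}^2\big)E+\|\nabla\bm{u}\|_{L^\infty}E.
\]
Apply Lemma \ref{ineq:BMO} to $\bm{u}$ with $\|\cdot\|_{BMO}\le\|\cdot\|_{L^\infty}$, which gives $\|\nabla\bm{u}\|_{L^\infty}\lesssim 1+\|\nabla\times\bm{u}\|_{L^\infty}\log(e+E)$. Setting $Y=\log(e+E)$, this yields
\[
Y'\le C\big(1+\|\nabla^2\bm{h}\|_{L^3}^2+\|\nabla\bm{h}\|_{L^\infty}^2\big)+C\|\nabla\times\bm{u}\|_{L^\infty}Y.
\]
Grönwall then gives $Y(t)\le\big(Y(0)+\int_0^t C(1+\cdots)\,ds\big)\exp\big(\int_0^t\|\nabla\times\bm{u}\|_{L^\infty}ds\big)$. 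Exponentiating gives the stated double-exponential form, with constants absorbed into the exponents by the convention on $C$.
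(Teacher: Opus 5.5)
Your overall architecture is the paper's: the $\dot H^3$ energy estimate, the vanishing transport terms, the cancellation of $\bm{h}\cdot\nabla\nabla^3\bm{h}$ against $\bm{h}\cdot\nabla\nabla^3\bm{u}$, the Kato--Ponce commutators, the pointwise vanishing of $((\nabla^3\nabla\times\bm{h})\times\bm{h})\cdot\nabla^3\nabla\times\bm{h}$, and then Lemma \ref{ineq:BMO} with Gr\"onwall applied to $\log(e+E)$. The gap is in the Hall remainder, the step you yourself call the main obstacle.

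The Leibniz expansion of $\nabla^3((\nabla\times\bm{h})\times\bm{h})-(\nabla^3\nabla\times\bm{h})\times\bm{h}$ contains only three kinds of terms: $\nabla^2(\nabla\times\bm{h})\times\nabla\bm{h}$, $\nabla(\nabla\times\bm{h})\times\nabla^2\bm{h}$ and $(\nabla\times\bm{h})\times\nabla^3\bm{h}$. Each carries exactly four derivatives, so the remainder is bounded pointwise by $C(|\nabla\bm{h}||\nabla^3\bm{h}|+|\nabla^2\bm{h}|^2)$. There is no $\nabla^2\bm{h}\otimes\nabla^3\bm{h}$ term, since that would carry five derivatives. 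So the bound $\|\nabla^2\bm{h}\|_{L^3}\|\nabla^3\bm{h}\|_{L^6}\|\nabla^4\bm{h}\|_{L^2}$ you write down does not arise.

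Your proposed remedy would also fail if such a term were present. Interpolating $\|\nabla^3\bm{h}\|_{L^3}\lesssim\|\nabla^3\bm{h}\|_{L^2}^{1/2}\|\nabla^4\bm{h}\|_{L^2}^{1/2}$ gives $\|\nabla^2\bm{h}\|_{L^6}\|\nabla^3\bm{h}\|_{L^2}^{1/2}\|\nabla^4\bm{h}\|_{L^2}^{3/2}$. Young's inequality then leaves $\|\nabla^2\bm{h}\|_{L^6}^4\|\nabla^3\bm{h}\|_{L^2}^2\lesssim E^3$. That is not of the form (time-integrable low norm)$\times E$, and $\|\nabla^2\bm{h}\|_{L^6}$ cannot be controlled by $\|\nabla^2\bm{h}\|_{L^3}$. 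Your remark about arranging the powers to be exactly quadratic therefore covers a step that, as described, does not close.

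The fix is the direct H\"older split the paper uses, together with $\dot H^1\hookrightarrow L^6$:
\[
\int_{\mathbb{R}^3}|\nabla^2\bm{h}|^2|\nabla^4\bm{h}|\,dx\le\|\nabla^2\bm{h}\|_{L^3}\|\nabla^2\bm{h}\|_{L^6}\|\nabla^4\bm{h}\|_{L^2}\lesssim\|\nabla^2\bm{h}\|_{L^3}\|\nabla^3\bm{h}\|_{L^2}\|\nabla^4\bm{h}\|_{L^2}.
\]
The $|\nabla\bm{h}||\nabla^3\bm{h}|$ part is bounded by $\|\nabla\bm{h}\|_{L^\infty}\|\nabla^3\bm{h}\|_{L^2}\|\nabla^4\bm{h}\|_{L^2}$. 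Young's inequality then yields $\frac12\|\nabla^4\bm{h}\|_{L^2}^2+C(\|\nabla\bm{h}\|_{L^\infty}^2+\|\nabla^2\bm{h}\|_{L^3}^2)\|\nabla^3\bm{h}\|_{L^2}^2$, with no interpolation needed. This is exactly where the $L^2_tL^3$ norm of $\nabla^2\bm{h}$ in the statement comes from. With this correction, the rest of your argument goes through as in the paper.
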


\begin{proof}
    Define $E(t):=\|(\bm{u},\bm{h})(t,\cd)\|_{H^3}^2$, applying $\na^3$ to the equations of \eqref{eq:Hall-MHD} respectively, we have
\begin{equation*}
    \begin{aligned}
&\p_t\na^3\bm{u}+\na^3(\bm{u}\cd\na\bm{u})+\na^4p=\na^3(\bm{h}\cd\na\bm{h}),\\
        &\p_t \na^3\bm{h}+\na^3(\bm{u}\cd\na\bm{h})+\na^3\na\times((\na\times \bm{h})\times \bm{h})= \na^3(\bm{h}\cd\na\bm{u})+\na^3\Delta\bm{h}.
    \end{aligned}
\end{equation*}
Integrating on $L^2$ and adding them together, we have
\begin{equation}\label{eq:high_ord}
    \f{1}{2}\f{d}{dt}\|\na^3(\bm{u},\bm{h)} (t,\cd)\|_{L^2}^2 +\|\na^4\bm{h}(t,\cd)\|_{L^2}^2 = N_1+N_2+N_3+N_4+N_5.
\end{equation}
Here
\begin{equation*}
    \begin{aligned}
        &N_1:=-\int_{\bb{R}^3}\na^3(\bm{u}\cd\na\bm{u})\cd\na^3\bm{u}dx,\\
        &N_2:=-\int_{\bb{R}^3}\na^3(\bm{u}\cd\na\bm{h})\cd\na^3\bm{h}dx,\\
        &N_3:=\int_{\bb{R}^3} \na^3(\bm{h}\cd\na\bm{h})\cd\na^3\bm{u}dx,\\
        &N_4:=\int_{\bb{R}^3}\na^3(\bm{h}\cd\na\bm{u})\cd\na^3\bm{h}dx,\\
        &N_5:=-\int_{\bb{R}^3}\na\times\na^3((\nabla\times\bm{h})\times\bm{h})\cd\na^3\bm{h}dx.
    \end{aligned}
\end{equation*}
the term $\|\na^4\bm{h}(s,\cd)\|_{L^2}^2$ appears due to
\begin{equation*}
    \int_{\bb{R}^3}\na^3\Delta\bm{h}\cd\na^3\bm{h}dx=-\|\na^4\bm{h}\|_{L^2}^2.
\end{equation*}
Noting that
\begin{equation*}
\begin{aligned}
     &|N_1| = \left| -\int_{\bb{R}^3}\bm{u}\cd\na\na^3\bm{u}dx+\int_{\bb{R}^3}[\na^3,\bm{u}\cd\na]\bm{u}dx\right|=\left|\int_{\bb{R}^3}[\na^3,\bm{u}\cd\na]\bm{u}dx\right|\leq \|\na\bm{u}\|_{L^\infty}\|\na^3\bm{u}\|_{L^2}^2,\\
     &|N_2|=\left|-\int_{\bb{R}^3}\bm{u}\cd\na\na^3\bm{h}\cd\na^3\bm{h}dx-\int_{\bb{R}^3}[\na^3,\bm{u}\cd\na]\bm{h}\cd\na^3\bm{h}dx\right|\\
     &\qquad =\left|-\int_{\bb{R}^3}[\na^3,\bm{u}\cd\na]\bm{h}\cd\na^3\bm{h}dx\right|\leq \|\na\bm{u}\|_{L^\infty}\|\na^3\bm{h}\|_{L^2}^2.\\
\end{aligned}
\end{equation*}
The first term on the right of the above equalities vanishe by integrate by parts and the fact $\na\cd\bm{u}=0$. Adding them together, one can deduce
\begin{equation}\label{ineq:N1N2}
    |N_1|+|N_2|\leq C\|\na\bm{u}\|_{L^\infty}(\|\na^3\bm{u}\|_{L^2}^2+\|\na^3\bm{h}\|_{L^2}^2).
\end{equation}
Similarly, by using commutator notation, one can also write
\begin{equation}
\begin{aligned}\label{eq:N3N4}
    \left|N_3+N_4\right| &=\Big|\int_{\bb{R}^3}\bm{h}\cd\na\na^3\bm{h}\cd\na^3\bm{u}dx + \int_{\bb{R}^3}\bm{h}\cd\na\na^3\bm{u}\cd\na^3\bm{h}dx + \int_{\bb{R}^3}[\na^3,\bm{h}\cd\na]\bm{h}\cd\na^3\bm{u}dx\\
    &\qquad +\int_{\bb{R}^3}[\na^3,\bm{h}\cd\na]\bm{u}\cd\na^3\bm{h}dx\Big|.
\end{aligned}
\end{equation}
Noting that
\begin{equation*}
\int_{\bb{R}^3}\bm{h}\cd\na\na^3\bm{h}\cd\na^3\bm{h}ds+\int_{\bb{R}^3}\bm{h}\cd\na\na^3\bm{u}\cd\na^3\bm{h}dx = \int_{\bb{R}^3} h\cd\na(\na^3\bm{h}\cd\na^3\bm{u})=0.
\end{equation*}
Thus \eqref{eq:N3N4} satisfy by Lemma \ref{ineq:commu}:
\begin{equation}\label{ineq:N3N4}
    \left|N_3 + N_4\right|\leq C\|\na\bm{h}\|_{L^\infty}(\|\na^3\bm{u}\|_{L^2}^2+\|\na^3\bm{h}\|_{L^2}^2) .
\end{equation}
Lastly, we estimate $N_5$.
\begin{equation*}
\begin{aligned}
    N_5 &=  \left|\int_{\bb{R}^3} \na^3((\na\times \bm{h})\times \bm{h} ) \cd \na^3(\na\times \bm{h})dx\right|\\
    & =\left|\int_{\bb{R}^3} \big(\na^3((\na\times \bm{h})\times \bm{h} )- \na^3(\na\times\bm{h})\times \bm{h}\big) \cd \na^3(\na\times \bm{h})dx \right|\\
    & \leq \int_{\bb{R}^3} \left| \big(\na^3((\na\times \bm{h})\times \bm{h} )- \na^3(\na\times\bm{h})\times \bm{h}\big)\right| \left| \na^3(\na\times \bm{h})\right|dx\\
    & \leq  \int_{\bb{R}^3} \left| \big(\na^3((\na\times \bm{h})\times \bm{h} )- \na^3(\na\times\bm{h})\times \bm{h}\big)\right| \left| \na^4 \bm{h}\right|dx.
\end{aligned}
\end{equation*}
Here the second equality follows from the fact
\[
(\na^3(\na\times\bm{h})\times\bm{h})\cd\na^3(\na\times\bm{h})=0.
\]
Using the Leibniz formula,
\begin{equation*}
    |\big(\na^3((\na\times \bm{h})\times \bm{h} )- \na^3(\na\times\bm{h})\times \bm{h}\big) |  \leq C (|\na\bm{h}||\na^3\bm{h}|+ |\na^2\bm{h}|^2),
\end{equation*}
thus one deduces
\begin{equation}\label{ineq:N5}
\begin{aligned}
     |N_5|& \leq C \int_{\bb{R}^3} \left(|\na^2\bm{h}|^2|\na^4\bm{h}| + |\na \bm{h}||\na^3\bm{h}||\na^4\bm{h}|\right)dx\\
    & \leq C \|\na^4 \bm{h}\|_{L^2}\left(\|\na\bm{h}\|_{L^\infty}\|\na^3\bm{h}\|_{L^2}+\|\na^2\bm{h}\|_{L^3}\|\na^2\bm{h}\|_{L^6}\right)\\
    & \leq C    \|\na^4\bm{h}\|_{L^2}\|\na^3\bm{h}\|_{L^2}\left(\|\na\bm{h}\|_{L^\infty}+ \|\na^2\bm{h}\|_{L^3}\right)\\
    & \leq \f12\|\na^4\bm{h}\|_{L^2}^2 + C\|\na^3\bm{h}\|_{L^2}^2(\|\na\bm{h}\|_{L^\infty}^2 + \|\na^2\bm{h}\|_{L^3}^2).
\end{aligned}
\end{equation}
The second inequality above is a consequence of the H\"older inequality, the third follows from the Sobolev embedding theorem in $\mathbb{R}^3$ and the last follows from Young's inqaulity.

Adding \eqref{ineq:N1N2}, \eqref{ineq:N3N4} and \eqref{ineq:N5} together, inserting the result into the right side of \eqref{eq:high_ord}, one arrives
\begin{equation}\label{ineq:High1}
\begin{aligned}
    \f{d}{dt}(e+\|\na^3(\bm{u},\bm{h)} (t,\cd)\|_{L^2}^2) &\leq C\left(\|\na\bm{u}(t,\cd)\|_{L^\infty} + \|\na^2\bm{h}(t,\cd)\|_{L^3}^2  + \|\na\bm{h}(t,\cd)\|_{L^\infty}^2\right)\\
    &\qquad\times \left(e+\|\na^3(\bm{u},\bm{h)} (t,\cd)\|_{L^2}^2\right).
\end{aligned}
\end{equation}

By Lemma \ref{ineq:BMO}, one can deduce
\begin{equation*}
\begin{aligned}
	\|\nabla \boldsymbol{u}(t,\cd)\|_{L^\infty} & \lesssim 1+ \|\nabla \times \boldsymbol{u}(t,\cd)\|_{BMO(\mathbb{R}^3)} \log \left(e+\|\boldsymbol{u}(t,\cd)\|_{H^3}\right)\\
	& \lesssim 1 +  \|\na \times\bm{u}(t,\cd)\|_{L^\infty}\log(e+\|\bm{u}(t,\cd)\|_{H^3}).
\end{aligned}
\end{equation*}

Integrating the inequality \eqref{ineq:High1} over $(0,t)$ and together with the above inequality, one can deduce that 
\begin{equation*}
\begin{aligned}
    e+E(t)\leq e+E_0 +\int_0^t \left(1+\|\na\times\bm{u}(s,\cd)\|_{L^\infty}\log (e+E(s))+ \|\na^2\bm{h}(t,\cd)\|_{L^3}^2 +  \|\na\bm{h}(s,\cd)\|_{L^\infty}^2\right)\\
    \times \left(e+E(s)\right)ds.
\end{aligned}
\end{equation*}
Using the Gr\"onwall inequality, one arrives
\begin{equation*}
\begin{aligned}
    e+E(t)\leq (e+E_0)&\exp\left(\int_0^t(1+ \|\na^2\bm{h}(t,\cd)\|_{L^3}^2 +\|\na\bm{h}(s,\cd)\|_{L^\infty}^2)ds \right.\\
     &\left.\qquad \qquad \qquad\qquad\qquad + \int_0^t\|\na\times\bm{u}(s,\cd)\|_{L^\infty}\log(e+E(s))ds\right).
\end{aligned}
\end{equation*}
Next, we take the logarithm on both sides of the above inequality to get
\begin{equation*}
\begin{aligned}
    \log(e+E(t))\leq \log(e+E_0) &+ \int_0^t\left(1+\|\na^2\bm{h}(s,\cd)\|_{L^3}^2+\|\na\bm{h}(s,\cd)\|^2_{L^\infty}\right)ds \\
    &+  \int_0^t  \|\na\times\bm{u}(s,\cd)\|_{L^\infty}\log(e+E(s))ds.
\end{aligned}
\end{equation*}
Finally, applying the Gr\"onwall inequality again, one arrives
\begin{equation*}
\begin{aligned}
    \log(e+E(t))\leq & \left(\log(e+E_0)+ \int_0^t (1 + \|\na^2\bm{h}(s,\cd)\|_{L^3}^2  +\|\na\bm{h}(s,\cd)\|_{L^\infty}^2)ds\right)\\
     &\times\exp\left(\int_0^t \|\na\times\bm{u}(s,\cd)\|_{L^\infty}ds\right).
\end{aligned}
\end{equation*}
Then taking the exponential on both sides of above inequality, one arrives
\begin{equation*}
    e+E(t) \leq \left( (e+E_0)\exp \left(\int_0^t (1+ \|\na^2\bm{h}(s,\cd)\|_{L^3}^2 +\|\na\bm{h}(s,\cd)\|_{L^\infty}^2)ds\right)\right)^{\exp\left(\int_0^t \|\na\times\bm{u}(s,\cd)\|_{L^\infty}ds\right)}.
\end{equation*}
\end{proof}

\section{The end of the proof}\label{endd}
Now it remains to verify the a priori assumption \eqref{ineq:prio2}. We begin by investigating $\eqref{eq:w}_{1,3}$, 
\begin{equation*} 
\left\{\begin{aligned}
	& \p_t \omega_r + (u_r\p_r + u_z\p_z)\omega_r = (\omega_r\p_r + \omega_z\p_z)u_r,\\
	& \p_t \omega_{z} + (u_r\p_r + u_z\p_z)\omega_z = (\omega_r\p_r + \omega_z\p_z)u_z.
\end{aligned}\right.
\end{equation*}
Rewriting it in vector form, we obtain
\begin{equation*}
    \p_t \bm{U} + \bm{b} \cdot \na \bm{U} = \bm{A} \cdot \bm{U},
\end{equation*}
which 
\begin{equation*}
    \bm{U} = (\omega_r,\omega_z),\quad  \bm{A}=\begin{pmatrix}
		\p_ru_r  & \p_zu_r\\
        \p_ru_z  & \p_zu_z
	\end{pmatrix}, \quad \bm{b} = u_r\bm{e_r} + u_z\bm{e_z}.
\end{equation*}
Then, by Lemma \ref{ineq:uLp} with $p = \infty$ and identity $\|\bm{A}\|_{L^\infty} \leq \|\na \bm{b}\|_{L^\infty}$, we can deduce
\begin{equation*}
\begin{aligned}
    \|\nabla \times (u_{\theta}\bm{e_{\theta}}) (t,\cdot)\|_{L^\infty} &= \|(\omega_r,\omega_z)(t,\cdot)\|_{L^\infty} \leq  \|\nabla \times (u_{0,\theta}\bm{e_{\theta}})\|_{L^\infty}\exp\left( \int_0^t \|\nabla \bm{b}(s,\cdot)\|_{L^\infty} ds \right)\\
    &\leq \varepsilon\exp \left( \int_0^t (e+E(s))ds\right).
\end{aligned}
\end{equation*}
By Proposition \ref{ineq:High2}, we can obtain the following restriction of $T_*$:
\begin{equation}\label{ineq:fina1}
   \varepsilon T_* \exp\left(\int_0^{T_*}(e+E(s))ds\right)\leq \f{1}{2},
\end{equation}
ensures the a priori estimate \eqref{ineq:prio2}. Noting that \eqref{ineq:fina1} is ensured provided that the following condition holds.
\begin{equation}\label{ineq:fina2}
    \begin{aligned}
        \varepsilon T_*\exp\Bigg(\int_0^{T_*}
        &\Big( (e+E_0)\\
        &\times \exp (\int_0^t (1 + \|\na^2\bm{h}(s,\cd)\|_{L^3}^2 +\|\na\bm{h}(s,\cd)\|_{L^\infty}^2)ds)\Big)^{\exp\left(\int_0^t \|\na\times\bm{u}(s,\cd)\|_{L^\infty}ds\right)}
        \,dt\Bigg)\\
        &\leq \f12.
    \end{aligned}
\end{equation}
Next, we write
\begin{equation*}
\begin{aligned}
    &A(t):=\int_0^{t} \|\na\times \bm{u}(s,\cd)\|_{L^\infty}ds,\\
    &B(t):=\int_0^{t}(1+\|\na^2\bm{h}(s,\cd)\|_{L^3}^2+\|\na\bm{h}(s,\cd)\|^2_{L^\infty})ds.
\end{aligned}
\end{equation*}
Thus \eqref{ineq:fina2} can be rewritten as
\begin{equation} \label{fin2}
    \varepsilon T_*\exp\Bigg( \int_0^{T_*}((e+E_0)e^{B(t)}))^{e^{A(t)}}dt \Bigg)\leq \f12.
\end{equation} 
Since
\begin{equation*}
    (e+E_0)^{e^{A(t)}}\times e^{B(t)\cd e^{A(t)}} = \exp \left( e^{A(t)}(\log(e+E_0)+B(t) \right),
\end{equation*}
then \eqref{fin2} is satisfied if the following estimate holds:
\begin{equation} \label{fina3}
    \varepsilon T_*\exp\left(T_* e^{A(T_*)}\times(\log(e+E_0)+{B(T_*)}) \right)\leq \f12.
\end{equation}
It remains to estimate $A(T_*)$ and $B(T_*)$. By \eqref{Omage_1}, we have the following estimate:
\begin{equation}\label{wwww}
\begin{aligned}
A(T_*) &= \int_0^{T_*} \left(\|(\omega_r,\omega_z)(s,\cd)\|_{L^\infty} + \|\omega_\theta(s,\cd)\|_{L^\infty}\right)ds \\    
& \leq T_*\sup_{0\leq t\leq T_*}\|\na\times u_{\theta}\bm{e_\theta}(t,\cd)\|_{L^\infty} + \int_0^{T_*}\|\omega_{\theta}(t,\cd)\|_{L^\infty}dt\\
&\leq T_*\exp\left(\exp\big(C(1+E_0)^{4/3}(1+T_*)^{5/3}\big)\right).
\end{aligned}
\end{equation}
Using Proposition \ref{ineq:nah_L2}, we have
\begin{equation}\label{ewew}
    B(T_*)\leq T_*\exp \left(\exp \left(\exp\big(C(1+E_0)^{4/3}(1+t)^{5/3}\big)\right)\right).
\end{equation}
Inserting \eqref{wwww} and \eqref{ewew} into the left side of \eqref{fina3}, we have, there exists a constant $C>0$, inequality \eqref{fina3} is satisfied if the following estimate holds:
\begin{equation*}
    \varepsilon\exp\left(\exp(\exp(\exp(C(1+E_0)^{4/3}(1+T_*)^{5/3}))))\right)\leq \f12.
\end{equation*}
Therefore, the condition \eqref{ineq:prio2} is satisfied if we choose
\[
T_* = \f{1}{C(1+E_0)^{4/3}}\left(\log\log\log\log{({2\varepsilon})^{-1}}\right)^{3/5}-1.
\]
Thus for $\varepsilon$ being sufficiently small, there exists $C_*>0$ such that
\begin{equation*}
    T_*= \frac{C_*}{({1+E_0})^{4/5}}\left(\log\log\log\log(\varepsilon^{-1})\right)^{3/5}.
\end{equation*}
This gives validity to the equality \eqref{res} and thus completes the proof of theorem \ref{theo}.

{
\section{Estimate of the vanishing resistivity}\label{ploi}
In the previous sections, we normalized the resistive coefficient by taking
$\nu=1$ for the sake of convenience. However, we point out here that the method developed in this paper breaks down as $\nu\to 0_+$. Indeed, for the case $\nu=0$, one may need the initial datum of $\bm{h}$ to be sufficiently small in a suitable sense in order to obtain a long lifespan result. See \cite{MR5040962} for example. 

For the further study of vanishing resistivity $\nu\to 0^+$, let us briefly explain how the
lifespan estimate \eqref{res} depends exactly on $\nu$. We first give the elementary scaling consequence for the heat operator.
\begin{lemma}\label{PPOe}
 Let $v_0\in H^2$ and $g\in L^2(0,T;L^2)$, and $v$ satisfy the initial value problem
\begin{equation*}
\left\{
\begin{aligned}
&\p_t v - \nu \Delta v=g(t,x)\,,\\
&v(0,x)= v_0(x)\,.
\end{aligned}
\right.
\end{equation*}
Then $v$ satisfies
\ba\l{INVH}
\|\na^2 v\|_{L^2(0,T;L^2)}\leq C\left(T^{\f{1}{2}}\|\na^2v_0\|_{L^2}+\nu^{-1}\|g\|_{L^2(0,T;L^2)}\right)\,.
\ea
\end{lemma}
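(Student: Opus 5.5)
The plan is to reduce to the case $\nu=1$ by rescaling time, and then apply the maximal regularity of Lemma \ref{ineq:heat} to the Duhamel part and Lemma \ref{BXXL} to the free part.

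\textbf{Decomposition.} Write $v=v_1+v_2$, where $v_1=e^{\nu t\Delta}v_0$ solves the homogeneous problem and
\[
v_2(t)=\int_0^t e^{\nu(t-s)\Delta}g(s)\,ds
\]
carries the forcing.

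\textbf{Free part.} Since $\nabla^2$ commutes with the heat semigroup, Lemma \ref{BXXL} with $l=r=2$ and $\alpha=0$ gives $\|\nabla^2 v_1(t)\|_{L^2}=\|e^{\nu t\Delta}\nabla^2 v_0\|_{L^2}\le C\|\nabla^2 v_0\|_{L^2}$. This holds uniformly in $\nu$ and $t$. Squaring and integrating over $(0,T)$ gives the contribution $CT^{1/2}\|\nabla^2 v_0\|_{L^2}$. This is the same computation used for the initial-data term in the proof of Proposition \ref{ineq:nah_L2}.

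\textbf{Forced part.} Set $\tau=\nu t$, $w(\tau,x)=v_2(\tau/\nu,x)$ and $\tilde g(\tau,x)=\nu^{-1}g(\tau/\nu,x)$. Then $\partial_\tau w-\Delta w=\tilde g$ with $w(0)=0$, so
\[
\nabla^2 w(\tau)=\int_0^\tau \nabla^2 e^{(\tau-\sigma)\Delta}\tilde g(\sigma)\,d\sigma=\mathcal{A}\tilde g .
\]
Lemma \ref{ineq:heat} with $p=q=2$ on $(0,\nu T)$ gives $\|\nabla^2 w\|_{L^2(0,\nu T;L^2)}\le C\|\tilde g\|_{L^2(0,\nu T;L^2)}$, with $C$ independent of the time interval since the lemma allows $T=\infty$. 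Changing variables back:
\[
\|\nabla^2 v_2\|_{L^2(0,T;L^2)}^2=\nu^{-1}\|\nabla^2 w\|_{L^2(0,\nu T;L^2)}^2,\qquad \|\tilde g\|_{L^2(0,\nu T;L^2)}^2=\nu^{-2}\cdot\nu\,\|g\|_{L^2(0,T;L^2)}^2=\nu^{-1}\|g\|^2_{L^2(0,T;L^2)}.
\]
Hence $\|\nabla^2 v_2\|_{L^2(0,T;L^2)}\le C\nu^{-1}\|g\|_{L^2(0,T;L^2)}$. The triangle inequality then yields \eqref{INVH}.

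\textbf{Main obstacle.} The only delicate point is tracking the powers of $\nu$ correctly through the rescaling. The key requirement is that the constant in Lemma \ref{ineq:heat} does not depend on the length of the time interval, which the lemma guarantees.

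As a cross-check, testing the equation against $-\Delta v$ and applying Young's inequality gives $\tfrac{d}{dt}\|\nabla v\|_{L^2}^2+\nu\|\Delta v\|_{L^2}^2\le \nu^{-1}\|g\|_{L^2}^2$. Since $\|\Delta v\|_{L^2}=\|\nabla^2 v\|_{L^2}$ on $\mathbb{R}^3$, this alternative route gives the same $\nu^{-1}$ scaling. It does so without needing the full maximal-regularity lemma, but it replaces the initial-data term by $\nu^{-1/2}\|\nabla v_0\|_{L^2}$.
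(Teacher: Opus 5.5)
Your proof is correct and follows the paper's argument essentially line for line. You use the same splitting into a forced part with zero data and a free part (your labels $v_1,v_2$ are swapped relative to the paper), the same time rescaling $\tau=\nu t$ with the interval-independent maximal regularity of Lemma \ref{ineq:heat}, and the same application of Lemma \ref{BXXL} with $l=r=2$, $\alpha=0$ for the initial-data term, and your energy-estimate cross-check is also valid but not needed.
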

}
\begin{proof}
Denoting $v=v_1+v_2$, we can split the problem \eqref{INVH} to
\begin{equation}\label{SPLIT}
\left\{
\begin{aligned}
&\p_t v_1 - \nu \Delta v_1=g(t,x)\,,\\
&v_1(0,x)= 0\,;
\end{aligned}
\right.\q\q\left\{
\begin{aligned}
&\p_t v_2 - \nu \Delta v_2=0\,,\\
&v_2(0,x)= v_0(x)\,.
\end{aligned}
\right.
\end{equation}
{Setting $\tau:=\nu t$ and $G(\tau,x):= v_1(\f{\tau}{\nu},x)$, one deduces from \eqref{SPLIT}$_1$ that
\[
\left\{
\begin{aligned}
&\p_\tau G(\tau,x)-\Delta G(\tau,x)=\nu^{-1}g(\tau/\nu,x)\,,\\
&G(0,x)=0\,.
\end{aligned}
\right.
\]
Therefore, by the maximal regularity of heat flow in Lemma \ref{ineq:heat},
\begin{equation}\label{OOw}
\begin{aligned}
\|\na^2 v_1\|_{L^2(0,T;L^2)}
&=\nu^{-1/2}\|\na^2 G\|_{L^2(0,\nu T;L^2)}\\
&\le C\nu^{-1/2}
  \left\|\nu^{-1}g(\tau/\nu,x)\right\|_{L^2(0, \nu T;L^2)}\\
&=C\nu^{-1}\|g\|_{L^2(0,T;L^2)} .
\end{aligned}    
\end{equation}}
For the estimate of $\eqref{SPLIT}_2$, by Lemma \ref{BXXL} with $l=r=2$, $\alpha=0$, one can deduce 
\begin{equation}\label{OOW2}
    \|\na^2v_2\|_{L^2(0,T;L^2)}\leq CT^{1/2}\|\na^2v_0\|_{L^2}.
\end{equation}
Adding \eqref{OOw} and \eqref{OOW2} together, one concludes \eqref{INVH}.
\end{proof}

In the following, we only track the dependence of the constants on the magnetic resistivity coefficient $\nu$.
First, repeating the proofs of Propositions \ref{OH2} and \ref{ineq:na_p_H}, while
keeping the coefficient $\nu$ yields
\begin{equation*}
    \begin{aligned}
        &\sup _{0 \leq s \leq t}\left(\|\Omega(s, \cdot)\|_{L^2 \cap L^6}^2+\|\nabla \cal{H}(s, \cdot)\|_{L^2}^2\right)+\nu\int_0^{t}\left\|\nabla^2 \cal{H}(s, \cdot)\right\|_{L^2}^2 d s\\
         & \qquad \qquad \leq \exp\big(C(1+\nu^{-1})(1+E_0)^{4/3}(1+t)^{5/3}\big).
        \end{aligned}
    \end{equation*}
and
\ba\label{M2}
\|\na\p_z\cal{H}(t,\cd)\|_{L^2}^2+\nu\int_0^t \|\na^2\p_z\cal{H}(s,\cd)\|_{L^2}^2ds\leq \exp\left(\exp\big(C(1+\nu^{-{1}})(1+E_0)^{4/3}(1+t)^{5/3}\big)\right).
\ea

Owing to \eqref{M2}, repeating the proof of Proposition \ref{ineq:p_zH} while keeping $\nu$, we have
\begin{equation}\label{M3}
    \begin{aligned}
&\|\p_z\cal{H}(s,\cd)\|_{L^1(0,t;L^\infty)}\leq \exp\left(\exp\big(C(1+\nu^{-{1}})(1+E_0)^{4/3}(1+t)^{5/3}\big)\right).
\end{aligned}
\end{equation}
With the help of Lemma \ref{PPOe}, similarly as Proposition \ref{ineq:nah_L2}, one can deduce
    \begin{equation*}\label{opo}
    \|\nabla \bm{h}\|_{{L^2}(0,t;L^\infty)} + \|\na^2 \bm{h}\|_{L^2(0,t;L^3)}^2 \leq \exp \left(\exp \left(\exp\big(C(1+\nu^{-1})(1+E_0)^{4/3}(1+t)^{5/3}\big)\right)\right).
\end{equation*}
Combining with the above result and the higher-order estimate in Proposition \ref{ineq:High2}, then applying the same method in Section 4, one can deduce that
\begin{equation*}
    T_*= \f{C(1+\nu^{-1})^{-3/5}}{(1+E_0)^{4/5}}
\left(\log\log\log\log(\varepsilon^{-1})\right)^{3/5}.
\end{equation*}
This complete the proof.

\end{document}